\theoremstyle{plain}
\newtheorem{theorem}{Theorem}[section]
\newtheorem{proposition}[theorem]{Proposition}
\newtheorem{lemma}[theorem]{Lemma}
\newtheorem{observation}[theorem]{Observation}
\newtheorem{corollary}[theorem]{Corollary}
\newtheorem{claim}[theorem]{Claim}
\theoremstyle{definition}
\newtheorem{example}[theorem]{Example}
\newtheorem{remark}[theorem]{Remark}
\newtheorem*{rmk}{Remark}
\DeclareMathOperator{\st}{st}
\DeclareMathOperator{\nc}{NC}
\DeclareMathOperator{\ii}{I}
\DeclareMathOperator{\starcluster}{SC}
\title{Domination numbers and noncover complexes of hypergraphs}
\author{
Jinha Kim\thanks{Discrete Mathematics Group, Institute for Basic Science (IBS), Daejeon, Korea.}
~\thanks{\texttt{jinhakim@ibs.re.kr}}
\and
Minki Kim\footnotemark[1]
~\thanks{Corresponding author. \texttt{minkikim@ibs.re.kr}}}
\date\today
\begin{document}
\maketitle
\begin{abstract}
Let $\mathcal{H}$ be a hypergraph on a finite set $V$.
A {\em cover} of $\mathcal{H}$ is a set of vertices that meets all edges of $\mathcal{H}$.
If $W$ is not a cover of $\mathcal{H}$, then $W$ is said to be a {\em noncover} of $\mathcal{H}$.
The {\em noncover complex} of $\mathcal{H}$ is the abstract simplicial complex whose faces are the noncovers of $\mathcal{H}$.
In this paper, we study homological properties of noncover complexes of hypergraphs.
In particular, we obtain an upper bound on their Leray numbers.
The bound is in terms of hypergraph domination numbers.
Also, our proof idea is applied to compute the homotopy type of the noncover complexes of certain uniform hypergraphs, called {\em tight paths} and {\em tight cycles}.
This extends to hypergraphs known results on graphs.
\end{abstract}

\section{Introduction}
A {\em hypergraph} on $V$ is a collection of non-empty subsets of $V$, called {\em edges}.
When $\mathcal{H}$ is a hypergraph on $V = V(\mathcal{H})$, we call the set $V$ the {\em vertex set} of $\mathcal{H}$.
Note that if $V = \emptyset$ then $\mathcal{H} = \emptyset$.
A singleton edge $\{v\}\in\mathcal{H}$ is called a {\em loop}.
For a positive integer $k$, a hypergraph is said to be {\em $k$-uniform} if every edge has size $k$.
For example, graphs are $2$-uniform hypergraphs.
Throughout this paper, we assume every hypergraph has a finite vertex set and no two edges in a hypergraph are identical.

Let $\mathcal{H}$ be a hypergraph on $V$.
A subset $W$ of $V$ is said to be {\em independent} if it contains no edge of $\mathcal{H}$.
An {\em abstract simplicial complex} on $V$ is a family of subsets of $V$ that is closed under the operation of taking subsets.
The set $\ii(\mathcal{H})$ of independent sets of $\mathcal{H}$ is clearly an abstract simplicial complex.
It is called the {\em independence complex} of $\mathcal{H}$.

A {\em cover} of $\mathcal{H}$ is a vertex subset $W$ of $V$ that meets all edges of $\mathcal{H}$.
Clearly, $W$ is a cover if and only if $V \setminus W$ is an independent set.
$W$ is said to be a {\em noncover} of $\mathcal{H}$ if it is not a cover of $\mathcal{H}$.
Let $\nc(\mathcal{H})$ be the complex of noncovers of $\mathcal{H}$.
Note that every maximal face of $\nc(\mathcal{H})$ is the complement of an edge of $\mathcal{H}$.

For a simplicial complex $K$, the {\em (combinatorial) Alexander dual} of $K$ is the simplicial complex \[D(K) := \{\sigma \subset V: V\setminus \sigma\notin K\}.\]
Observe that the noncover complex of a hypergraph $\mathcal{H}$ is the Alexander dual of the independence complex of $\mathcal{H}$.
Let $\tilde{H}_i(K)$ be the $i$-dimensional reduced homology group of $K$.
In this paper, the coefficients of homology groups are taken in $\mathbb{Z}_2$.
	The homology groups of a simplicial complex $K$ and those of its dual $D(K)$ are related by a duality theorem~\cite{Kal83, Sta82}.
\begin{theorem}[The duality theorem]\label{duality theorem}
Let $K$ be a simplicial complex on $V$.
Then $\tilde{H}_i(D(K)) \cong \tilde{H}_{|V|-i-3}(K)$ for all $i$.
\end{theorem}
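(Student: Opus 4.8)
The plan is to establish the isomorphism at the level of augmented simplicial chain complexes over $\mathbb{Z}_2$, using the full simplex $2^V$ as a contractible ambient complex and exploiting the inclusion-reversing involution $\sigma \mapsto V \setminus \sigma$ on its face set. Write $n = |V|$, and for a complex $L$ on $V$ let $\tilde{C}_*(L)$ denote its augmented $\mathbb{Z}_2$-chain complex, so that $\tilde{C}_d(L)$ has the $d$-dimensional faces of $L$ as a basis (with $\emptyset$ in degree $-1$) and $\tilde{H}_*(L)$ is its homology. Since we work over $\mathbb{Z}_2$, all orientation signs may be ignored.

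As $K$ is a simplicial complex, $\tilde{C}_*(K)$ is a subcomplex of $\tilde{C}_*(2^V)$, so I would form the relative chain complex $\tilde{C}_*(2^V,K)=\tilde{C}_*(2^V)/\tilde{C}_*(K)$, whose degree-$d$ part has as a basis the \emph{non-faces} $\sigma\notin K$ with $|\sigma|=d+1$. Three isomorphisms then do the job. \emph{(i)} Since $2^V$ is a cone it is acyclic, so the long exact sequence of the pair $(2^V,K)$ collapses to $\tilde{H}_i(2^V,K)\cong\tilde{H}_{i-1}(K)$ for every $i$. \emph{(ii)} The involution $\sigma\mapsto V\setminus\sigma$ restricts to a bijection from the non-faces of $K$ onto the faces of $D(K)$, sending a non-face of dimension $d$ to a face of $D(K)$ of dimension $n-d-2$; one checks that it carries the relative boundary map $\partial$ to the simplicial coboundary map $\delta$ of $D(K)$, because deleting a vertex $v$ from a non-face $\sigma$ (and discarding the term when $\sigma\setminus v\in K$) corresponds to adjoining $v$ to the face $V\setminus\sigma$ of $D(K)$ (which is permitted exactly when $\sigma\setminus v\notin K$). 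Hence $\tilde{C}_d(2^V,K)\cong\tilde{C}^{\,n-d-2}(D(K))$ compatibly with differentials, giving $\tilde{H}_i(2^V,K)\cong\tilde{H}^{\,n-i-2}(D(K))$. \emph{(iii)} Over the field $\mathbb{Z}_2$ the universal coefficient theorem gives $\tilde{H}^{\,n-i-2}(D(K))\cong\tilde{H}_{n-i-2}(D(K))$. Composing \emph{(i)}--\emph{(iii)} yields $\tilde{H}_{i-1}(K)\cong\tilde{H}_{n-i-2}(D(K))$ for all $i$, and the substitution $j=n-i-2$ (so $i-1=n-j-3$) rewrites this as $\tilde{H}_j(D(K))\cong\tilde{H}_{|V|-j-3}(K)$, as claimed. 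The argument is uniform and needs no separate treatment of the degenerate cases $K=2^V$ or $K=\emptyset$, where every group in sight vanishes.

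I expect the only delicate point to be step \emph{(ii)}: verifying that the involution genuinely intertwines the relative boundary operator with the coboundary operator of $D(K)$, keeping the degree shift $d\mapsto n-d-2$ correct and checking compatibility with passage to the quotient by $\tilde{C}_*(K)$. The remaining ingredients --- the long exact sequence of a pair, the acyclicity of a cone, and the universal coefficient theorem over a field --- are standard. A more topological route is also available: realize $K$ as a subcomplex of $\partial\Delta^{n-1}\cong S^{n-2}$, show via a barycentric deformation-retract argument that $|D(K)|$ is homotopy equivalent to the complement $S^{n-2}\setminus|K|$, and invoke classical Alexander duality in the sphere together with field coefficients; but the chain-level argument above is shorter and sidesteps the point-set technicalities.
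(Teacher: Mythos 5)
Your argument is correct, and it is worth noting that the paper does not prove Theorem~\ref{duality theorem} at all: it is quoted as a known result with citations to Kalai and Stanley, and to Bj\"orner--Tancer for a short proof. Your chain-level proof is essentially that standard short combinatorial argument: the long exact sequence of the pair $(2^V,K)$ with the cone $2^V$ acyclic gives $\tilde H_i(2^V,K)\cong\tilde H_{i-1}(K)$; the complementation involution $\sigma\mapsto V\setminus\sigma$ identifies the relative chain complex with the augmented cochain complex of $D(K)$ with the degree shift $d\mapsto |V|-d-2$ (your verification that discarding terms $\sigma\setminus v\in K$ matches the admissibility condition $(V\setminus\sigma)\cup\{v\}\in D(K)$ is exactly the key check); and field coefficients convert cohomology back to homology. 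The index bookkeeping, including the degrees $-1$ and $|V|-1$ at the two ends, comes out right, so the isomorphism $\tilde H_j(D(K))\cong\tilde H_{|V|-j-3}(K)$ follows as you state. What your write-up buys over the paper's citation is a self-contained, purely combinatorial proof over $\mathbb{Z}_2$ that avoids topological Alexander duality in the sphere. One small point to make explicit: in the degenerate case where $K$ is the void complex you need the convention that $\sigma=V$ is allowed as a face of $D(K)$ (i.e.\ $\subset$ in the definition of $D(K)$ is read as $\subseteq$), so that $V\setminus\emptyset=V$ lies in $D(K)$ and your bijection between non-faces of $K$ and faces of $D(K)$ is complete; with that reading, your remark that the degenerate cases need no separate treatment is accurate.
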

See also \cite{BM09} for a short proof.

Here are three examples.
Let $\tilde{\beta}_i(\cdot)$ denote the $i$-th reduced Betti number of the complex, i.e. the rank of the $i$-dimensional reduced homology group.
\begin{itemize}
	\item If $\mathcal{H}$ has no edge, then $\ii(\mathcal{H})$ is the {\em simplex} on $V$, i.e. $\ii(\mathcal{H}) = 2^V$, and $\nc(\mathcal{H})$ is a {\em void complex}, i.e. $\nc(\mathcal{H}) = \emptyset$. 
In this case, both complexes have vanishing reduced homology in every dimension.
	\item Let $\mathcal{H} = \{V\}$.
	Then $\ii(\mathcal{H})$ is the boundary of the simplex on $V$, $\tilde{\beta}_{|V|-2}(\ii(\mathcal{H})) = 1$ and $\tilde{\beta}_{i}(\ii(\mathcal{H})) = 0$ for every $i \neq |V|-2$.
	On the other hand, $\nc(\mathcal{H})$ is an {\em empty complex}, i.e. $\nc(\mathcal{H}) = \{\emptyset\}$.
	Note that if $K$ is an empty complex then $\tilde{\beta}_{-1}(K) = 1$ and $\tilde{\beta}_{i}(K) = 0$ for every $i \neq -1$.	
	\item Suppose $\mathcal{H}$ contains a loop of $v$ for each $v \in V$, i.e. $\binom{V}{1} \subseteq \mathcal{H}$. Then $\ii(\mathcal{H})$ is an empty complex and $\nc(\mathcal{H})$ is the boundary of the simplex on $V$.
\end{itemize}

In this paper, we study relations between domination numbers for hypergraphs and homological properties of noncover complexes and independence complexes of hypergraphs.

\subsection{Lerayness and connectivity of simplicial complexes}
	A simplicial complex $K$ on $V$ is said to be {\em $d$-Leray} if $\tilde{H}_i(K[W]) = 0$ for all $i \geq d$ and $W \subset V$, where $K[W]$ is the subcomplex of $K$ induced on $W$.
	The {\em Leray number} $L(K)$ of $K$ is the minimum integer $d$ such that $K$ is $d$-Leray.
	For example, the boundary of an $n$-simplex is $n$-Leray.
	
	A closely related parameter is the (homological) connectivity.
	A simplicial complex $K$ on $V$ is said to be {\em (homologically) $k$-connected} if $\tilde{H}_i(K) = 0$ for all $-1 \leq i \leq k$.
	Let  $\eta(K)$ be the maximal integer $k$ for which $K$ is $(k-2)$-connected.
	If there is no such $k$ (in particular, if $K$ is contractible), then we write $\eta(K) = \infty$.
	For example, any non-empty complex $K$ has $\eta(K) \geq 1$ and the boundary of an $n$-simplex has $\eta(\partial\Delta_n) = n$.
    Theorem~\ref{duality theorem} implies that any simplicial complex $K$ has $L(K) \leq d$ if $\eta(D(K[W]) \geq |W| - d - 1$ for every $W \subset V$.

\subsection{Domination numbers for hypergraphs}
    We will define three domination parameters of hypergraphs.
    
	Let $\mathcal{H}$ be a hypergraph on $V$.
	Let $v$ be a vertex in $V$ and $W$ be a subset of $V$.
	Here are three different definitions of ``domination'' in hypergraphs.
	\begin{itemize}
	\item $W$ {\em totally dominates} $v$ if $v$ is a loop in $\mathcal{H}$ or there exists a vertex $w \neq v$ in $W$ such that $v$ and $w$ belong to the same edge. This notion appears in a paper by Bujt\'{a}s et al. \cite{BHTY14}.
	\item $W$ {\em (weakly) dominates} $v$ if $W$ totally dominates $v$ or $W$ contains $v$. This notion was introduced by Acharya~\cite{Ach07}.
	\item $W$ {\em strongly totally dominates} $v$ if there exists $W' \subset W \setminus \{v\}$ such that $W' \cup \{v\}$ is an edge of $\mathcal{H}$. In particular, the empty set dominates $v$ if $v$ is a loop.
	\end{itemize}
	Note that if $W$ strongly totally dominates $v$ then it dominates $v$ in any of the other notions of domination.
	In this paper, we say $W$ {\em dominates} $v$ if $W$ strongly totally dominates $v$.

	Let $A$ be a subset of $V$.
	If $W \subset V$ dominates every vertex in $A$, then we say $W$ {\em dominates} $A$.	
	The {\em strong total domination number of $A$ in $\mathcal{H}$} is the integer
\[\gamma(\mathcal{H}; A) := \min\{|W|:W\subset V, ~W\text{ dominates }A\}.\]
	The {\em strong total domination number} $\tilde{\gamma}(\mathcal{H})$ of $\mathcal{H}$ is the strong total domination number of the whole vertex set, i.e. $\tilde{\gamma}(\mathcal{H}) = \gamma(\mathcal{H}; V)$.
	
	$A \subset V$ is said to be {\em strongly independent} in $\mathcal{H}$ if it is independent and every edge of $\mathcal{H}$ contains at most one vertex of $A$.	The {\em strong independence domination number} of $\mathcal{H}$ is the integer
\[\gamma_{si}(\mathcal{H}) := \max\{\gamma(\mathcal{H}; A):A\text{ is a strongly independent set of }\mathcal{H}\}.\]

	The {\em edgewise-domination number} of $\mathcal{H}$ is the minimum number of edges whose union dominates the whole vertex set $V$, i.e.
	\[\gamma_E(\mathcal{H}) := \min\{|\mathcal{F}|: \mathcal{F} \subset \mathcal{H}, \bigcup_{F \in \mathcal{F}}F\text{ dominates }V\}.\]
	Clearly, if $\mathcal{H}$ is $k$-uniform, then $\gamma_E(\mathcal{H}) \geq \left\lceil\frac{\tilde{\gamma}(\mathcal{H})}{k}\right\rceil$.
	
	Note that if $\binom{V}{1} \subset \mathcal{H}$, then $\tilde{\gamma}(\mathcal{H}) = \gamma_{si}(\mathcal{H}) = \gamma_E(\mathcal{H}) = 0$.
    If $\mathcal{H}$ has an {\em isolated vertex} $v$, i.e. if no edge of $\mathcal{H}$ contains $v$, then there does not exist $W\subset V$ that dominates $v$.
    In this case $\tilde{\gamma}(\mathcal{H})$, $\gamma_{si}(\mathcal{H})$ and $\gamma_E(\mathcal{H})$ are defined as $\tilde{\gamma}(\mathcal{H}),  \gamma_{si}(\mathcal{H}), \gamma_E(\mathcal{H})= \infty$.
	
	\smallskip

    Bounding $\eta(\ii(\mathcal{H}))$ in terms of the above domination parameters when $\mathcal{H}$ is a ($2$-uniform) graph has been studied in many papers.
    The following theorem summarizes such results in \cite{AH00, ACK02, Chud}. (See also \cite{Mes01, Mes03}.)
	\begin{theorem}\label{eta graph}
	For every graph $G$, $\eta(\ii(G)) \geq \max\{\left\lceil\frac{\tilde{\gamma}(G)}{2}\right\rceil, \gamma_{si}(G), \gamma_E(G)\}$.
	\end{theorem}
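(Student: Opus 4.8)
The plan is to establish $\eta(\ii(G))\ge\gamma_E(G)$ and $\eta(\ii(G))\ge\gamma_{si}(G)$ separately; since graphs are $2$-uniform, $\gamma_E(G)\ge\lceil\tilde\gamma(G)/2\rceil$, so these two imply the full inequality. Both will be proved by induction on $|V(G)|$ from one decomposition: for a vertex $a$ of $G$,
\[
\ii(G)\;=\;\Big(\{a\}*\ii(G-N[a])\Big)\;\cup\;\bigcup_{w\in N(a)}\Big(\{w\}*\ii(G-N[w])\Big),
\]
each piece being a cone, hence contractible. Alternatively splitting $\ii(G)=\ii(G-a)\cup\big(\{a\}*\ii(G-N[a])\big)$ (the two pieces meeting in $\ii(G-N[a])$, the second contractible) gives the coarse Mayer--Vietoris bound $\eta(\ii(G))\ge\min\{\eta(\ii(G-a)),\,\eta(\ii(G-N[a]))+1\}$. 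Using the whole cover by cones $C_\ell$, $\ell\in\{a\}\cup N(a)$, and writing $C_S=\bigcap_{\ell\in S}C_\ell$, the generalized (nerve) Mayer--Vietoris spectral sequence gives the fine bound: for an integer $m$, $\eta(\ii(G))\ge m$ whenever $\eta(C_S)\ge m-|S|+1$ for every $S$ with $|S|\ge2$.

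For $\gamma_{si}$ I would prove the stronger statement that $\eta(\ii(G))\ge\gamma(G;A)$ for every independent set $A$. If $G$ has an isolated vertex, $\ii(G)$ is a cone and the bound is trivial; if $A=\emptyset$ it is trivial; otherwise $A$ is independent without isolated vertices, so $A\ne V$, and I pick a vertex $v\notin A$ and apply the coarse bound at $v$. In the branch $\ii(G-v)$ the set $A$ stays independent and $\gamma(G-v;A)\ge\gamma(G;A)$, so induction gives $\eta(\ii(G-v))\ge\gamma(G;A)$. In the branch $\ii(G-N[v])$, put $A'=A\setminus N(v)$; a set dominating $A'$ in $G-N[v]$ together with $v$ dominates $A$ in $G$ (as $v$ dominates $A\cap N(v)$), so $\gamma(G-N[v];A')\ge\gamma(G;A)-1$ and induction gives $\eta(\ii(G-N[v]))\ge\gamma(G;A)-1$. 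Together these give $\eta(\ii(G))\ge\gamma(G;A)$. The key is to recurse on a vertex \emph{outside} $A$, so that neither branch drops a dimension; with that choice the argument is immediate.

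For $\gamma_E$, set $m=\gamma_E(G)$ and induct on $|V(G)|$; assume $1\le m<\infty$ (the cases $m=0$ and ``$G$ has an isolated vertex'' are trivial), so every vertex has a neighbor. Pick any $a$ and apply the fine bound; the content is to control the $C_S$ for $|S|\ge2$. One should verify: (a) if $a\in S$ then $C_S=\ii\!\big(G-\bigcup_{\ell\in S}N[\ell]\big)$; (b) if $a\notin S$ (so $S\subseteq N(a)$) and $G[S]$ has no isolated vertex, then again $C_S=\ii\!\big(G-\bigcup_{w\in S}N[w]\big)$; (c) if $a\notin S$ and $G[S]$ has an isolated vertex, then $C_S$ is a join with a nonempty simplex, hence contractible, so $\eta(C_S)=\infty$. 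In (a) and (b), induction reduces $\eta(C_S)\ge m-|S|+1$ to the statement that $\gamma_E$ of the neighborhood-deleted graph is $\ge m-|S|+1$; and this holds because $\bigcup_{\ell\in S}N[\ell]$ (resp.\ $\bigcup_{w\in S}N[w]$) is dominated by at most $|S|-1$ edges --- namely the edges $\{a,w\}$, $w\in S\setminus\{a\}$, in case (a), and the edges of a spanning forest of $G[S]$ in case (b) --- using that every edge $\{x,y\}$ dominates $N[x]\cup N[y]$; so any dominating family of the deleted graph of size $<m-|S|+1$, enlarged by those, would dominate $V$ with fewer than $m$ edges, contradicting $m=\gamma_E(G)$.

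I expect the crux to be the structural analysis of the intersections $C_S$ in the $\gamma_E$ argument: identifying when $C_S$ is contractible (when the induced graph on $S$ has an isolated vertex, which makes $C_S$ a join with a simplex) and, otherwise, recognizing $C_S$ as the independence complex of a neighborhood-deleted subgraph, and then matching this against the arithmetic $m-|S|+1$ coming from the spectral sequence. Everything else --- the recursions, the cone/join identifications, and the domination counting --- is routine once that fact is pinned down. (The three bounds are those of \cite{AH00, ACK02, Chud}; the above organizes a unified proof, and the bound $\eta(\ii(G))\ge\gamma_E(G)$ could alternatively be read off from Theorem~\ref{duality theorem} and an estimate on the top reduced homology of $\nc(G)$, in the spirit of the rest of this paper.)
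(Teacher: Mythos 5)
Your proposal is correct, but it takes a genuinely different route from the paper. The paper never argues on $\ii(G)$ directly: Theorem~\ref{eta graph} is quoted from \cite{AH00, ACK02, Chud}, and the paper's own derivation (of the stronger hypergraph statement, Theorem~\ref{eta dual} and Corollary~\ref{eta}) passes to the Alexander dual: it decomposes $\nc(\mathcal{H})=\nc(\mathcal{H}-e)\cup\Delta_{V\setminus e}$ with intersection $\nc(\mathcal{H}\neg\, e)$, proves the domination inequalities of Lemma~\ref{inequalities} under edge annihilation, and runs a single Mayer--Vietoris induction on $|V|+|\mathcal{H}|$ that handles all three parameters at once and works for arbitrary hypergraphs. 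You instead stay on the primal side: the Meshulam-type coarse bound $\eta(\ii(G))\geq\min\{\eta(\ii(G-v)),\eta(\ii(G-N[v]))+1\}$ gives the stronger per-set statement $\eta(\ii(G))\geq\gamma(G;A)$ for every independent $A$ (for graphs, strong independence coincides with independence, so this is the $\gamma_{si}$ bound), and covering $\ii(G)$ by the stars of the vertices of $N[a]$ --- the star cluster of Remark~\ref{remark:star cluster} --- together with a nerve/Mayer--Vietoris spectral sequence gives the $\gamma_E$ bound; your observation $\gamma_E(G)\geq\left\lceil\tilde{\gamma}(G)/2\right\rceil$ for $2$-uniform hypergraphs correctly disposes of the third parameter. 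Your structural analysis of $C_S$ (equal to $\ii\bigl(G-\bigcup_{\ell\in S}N[\ell]\bigr)$ when $G[S]$ has no isolated vertex, a cone otherwise) and the counting $\gamma_E(G)\leq\gamma_E\bigl(G-\bigcup_{\ell\in S}N[\ell]\bigr)+|S|-1$ are right. One caveat: the ``fine bound'' is not true as stated for an arbitrary cover by contractible subcomplexes, since void intersections (a nontrivial nerve) can obstruct it --- two disjoint simplices already give a counterexample; you need every $C_S$ to be non-void so that the nerve is a full simplex. This holds here because graphs have no loops, so $\emptyset\in C_S$ for all $S$, and the degenerate case $C_S=\{\emptyset\}$ is exactly absorbed by your arithmetic, since then $\gamma_E(G)\leq|S|-1$. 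In exchange for being graph-specific, your argument is self-contained and yields $\eta(\ii(G))\geq\gamma(G;A)$ for each independent set; the paper's dual route generalizes verbatim to hypergraphs and also feeds its Leray-number results (Theorem~\ref{leray numbers}).
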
	
	
	An immediate corollary of Theorem~\ref{duality theorem} and Theorem~\ref{eta graph} is:
    \begin{corollary}\label{eta dual graph}
	For every graph $G$, $\tilde{H}_i({\nc(G)}) = 0$ for all $i \geq |V(G)| - \max\{ \left\lceil\frac{\tilde{\gamma}(G)}{2}\right\rceil, \gamma_{si}(G), \gamma_E(G)\} - 1$.
	\end{corollary}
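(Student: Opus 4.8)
The plan is to read this off Theorem~\ref{eta graph} by applying the duality theorem, since the noncover complex is exactly the Alexander dual of the independence complex; the only real content is the index bookkeeping. Write $n = |V(G)|$ and $m = \max\{\lceil\tilde{\gamma}(G)/2\rceil,\ \gamma_{si}(G),\ \gamma_E(G)\}$. First I would unwind the $\eta$-language in Theorem~\ref{eta graph}: the inequality $\eta(\ii(G)) \geq m$ means, by definition, that $\ii(G)$ is $(m-2)$-connected, i.e.
\[\tilde{H}_j(\ii(G)) = 0 \qquad \text{for all } j \text{ with } -1 \leq j \leq m-2.\]
(If $G$ has an isolated vertex then $m = \infty$ and $\ii(G)$ is a cone, so $\tilde{H}_j(\ii(G)) = 0$ for every $j$; the argument below then yields $\tilde{H}_i(\nc(G)) = 0$ for all $i$, which matches the claimed range $i \geq n-m-1 = -\infty$. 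Likewise if $G$ is edgeless, $\nc(G)$ is void and everything vanishes.)

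Next I would apply Theorem~\ref{duality theorem} to $K = \ii(G)$, using that $\nc(G) = D(\ii(G))$: for every $i$,
\[\tilde{H}_i(\nc(G)) \;=\; \tilde{H}_i\big(D(\ii(G))\big) \;\cong\; \tilde{H}_{\,n-i-3}(\ii(G)).\]
Now fix $i \geq n-m-1$ and put $j := n-i-3$. Since $i \geq n-m-1$ we get $j \leq m-2$ automatically. If $i \leq n-2$, then also $j \geq -1$, so $-1 \leq j \leq m-2$ and the right-hand side vanishes by the displayed connectivity bound. If instead $i \geq n-1$, then $j \leq -2$, and reduced homology is zero in degrees $\leq -2$, so again the right-hand side vanishes. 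These two subranges together exhaust all $i \geq n-m-1$, giving $\tilde{H}_i(\nc(G)) = 0$ throughout that range, which is the assertion.

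I do not expect a genuine obstacle: the statement is flagged as an immediate corollary, and the proof is the two-line chain above. The only points requiring a moment's care are (i) converting $\eta(\ii(G)) \geq m$ into the precise homology-vanishing window $-1 \leq j \leq m-2$, (ii) checking that the reindexing $j = n-i-3$ maps the target range $i \geq n-m-1$ exactly into $j \leq m-2$, so that the trivial degrees $j \leq -2$ cover the ``extra'' values $i \geq n-1$, and (iii) noting the degenerate cases ($m = \infty$, or $G$ without edges) are consistent with the claim — all of which are dispatched above.
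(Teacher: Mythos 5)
Your argument is correct and is exactly the route the paper intends: the corollary is stated there as an immediate consequence of Theorem~\ref{eta graph} together with the duality theorem, with the only content being the index shift $j=n-i-3$ that you carry out (including the degenerate degrees $j\le -2$ and the $m=\infty$ and edgeless cases). Nothing further is needed.
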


\subsection{Main results}
	The first part of this paper establishes a hypergraph analogue of Corollary~\ref{eta dual graph}.
	\begin{theorem}\label{eta dual}
	For every hypergraph, $\tilde{H}_i({\nc(\mathcal{H})}) = 0$ for all $i \geq |V(\mathcal{H})| - \max\{ \left\lceil\frac{\tilde{\gamma}(\mathcal{H})}{2}\right\rceil, \gamma_{si}(\mathcal{H}), \gamma_E(\mathcal{H})\} - 1$.
	\end{theorem}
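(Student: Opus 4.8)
The plan is to pass to the Alexander dual. Since $\nc(\mathcal H)=D(\ii(\mathcal H))$, Theorem~\ref{duality theorem} gives $\tilde H_i(\nc(\mathcal H))\cong\tilde H_{|V|-i-3}(\ii(\mathcal H))$, so the asserted vanishing for $i\ge |V|-d-1$, where $d:=\max\{\lceil\tilde\gamma(\mathcal H)/2\rceil,\gamma_{si}(\mathcal H),\gamma_E(\mathcal H)\}$, is equivalent to $\tilde H_j(\ii(\mathcal H))=0$ for all $j\le d-2$, that is, to $\eta(\ii(\mathcal H))\ge d$. It therefore suffices to prove the three inequalities
\[
\eta(\ii(\mathcal H))\ \ge\ \Big\lceil\tfrac{\tilde\gamma(\mathcal H)}{2}\Big\rceil,\qquad
\eta(\ii(\mathcal H))\ \ge\ \gamma_{si}(\mathcal H),\qquad
\eta(\ii(\mathcal H))\ \ge\ \gamma_E(\mathcal H),
\]
each a hypergraph analogue of the corresponding clause of Theorem~\ref{eta graph}.

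The common tool is a Mayer--Vietoris splitting of $\ii(\mathcal H)$ along a vertex. If $v\in V$ is neither a loop nor isolated, write $\st(v)$ for its star in $\ii(\mathcal H)$; then $\ii(\mathcal H)=\ii(\mathcal H\setminus v)\cup\st(v)$, where $\mathcal H\setminus v$ is the hypergraph on $V\setminus v$ with edge set $\{e\in\mathcal H:v\notin e\}$, the star is a cone and hence contractible, and $\ii(\mathcal H\setminus v)\cap\st(v)=\operatorname{lk}_{\ii(\mathcal H)}(v)=\ii(\mathcal H/v)$, where $\mathcal H/v$ has vertex set $V\setminus v$ and edge collection $\{e\in\mathcal H:v\notin e\}\cup\{e\setminus v:v\in e\in\mathcal H\}$. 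The Mayer--Vietoris sequence (with $\mathbb Z_2$ coefficients) then yields
\[
\eta(\ii(\mathcal H))\ \ge\ \min\{\eta(\ii(\mathcal H\setminus v)),\ \eta(\ii(\mathcal H/v))+1\},
\]
exactly as in the classical graph argument. Loops of $\mathcal H$ lie in no independent set, so deleting a looped vertex does not change $\ii(\mathcal H)$ and we may assume $\mathcal H$ is loopless; an isolated vertex makes $\ii(\mathcal H)$ a cone, so all three right-hand sides are $\infty$ and the inequalities are vacuous in that case.

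Each inequality is proved by induction on $|V|$, the point being to pick the split vertex so that the two smaller hypergraphs inherit large enough parameters. For $\gamma_{si}$ the induction is clean: fix a strongly independent $A$ with $\gamma(\mathcal H;A)=\gamma_{si}(\mathcal H)=:d$ and a minimum set $W$ dominating $A$; since $A$ is a nonempty independent set (when $d\ge1$) one has $W\not\subseteq A$, so we may take $v\in W\setminus A$. Then $A\subseteq V\setminus v$ is still strongly independent in $\mathcal H\setminus v$ with $\gamma(\mathcal H\setminus v;A)\ge d$ (discarding edges through $v$ only makes $A$ harder to dominate), while $A':=A\setminus\{x:\{v,x\}\in\mathcal H\}$ is strongly independent in $\mathcal H/v$ with $\gamma(\mathcal H/v;A')\ge d-1$ (a set of size $<d-1$ dominating $A'$ in $\mathcal H/v$, together with $v$, would dominate $A$ in $\mathcal H$ using fewer than $d$ vertices, since each removed vertex $x$ is dominated by $\{v\}$); the splitting lemma gives $\eta(\ii(\mathcal H))\ge\min\{d,(d-1)+1\}=d$. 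For $\gamma_E$: from a minimum edgewise-dominating family $\{e_1,\dots,e_m\}$ one verifies $\gamma_E(\mathcal H/v)\ge m-1$ for every non-isolated $v$ (lift an edgewise-dominating family of $\mathcal H/v$ back to $\mathcal H$ and adjoin one edge through $v$), and one picks $v\in e_1\setminus\bigcup_{i\ge2}e_i$ in an attempt to keep $\gamma_E(\mathcal H\setminus v)=m$. For $\lceil\tilde\gamma/2\rceil$: pick $v$ in an edge of minimum size $s$; then $\tilde\gamma(\mathcal H/v)\ge\tilde\gamma(\mathcal H)-s$ and $\tilde\gamma(\mathcal H\setminus v)\ge\tilde\gamma(\mathcal H)-s+1$, and a parity- and size-sensitive case analysis (or an extra second split, handling two vertices of a common edge at once) promotes this to the required bound, mirroring Meshulam's graph proof.

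The main obstacle in all three arguments is the same: deleting $v$ can strictly decrease the relevant domination parameter, so that the naive induction hypothesis applied to $\ii(\mathcal H\setminus v)$ falls short by one. For $\gamma_{si}$ this is avoided by the choice $v\in W\setminus A$, because the target set $A$ does not shrink. For $\gamma_E$ and $\tilde\gamma$, however, the set to be dominated is all of $V$, which does shrink, and no vertex need be ``good'': for $C_5$, say, every vertex is bad, since deleting any vertex produces $P_4$ with $\gamma_E(P_4)=1<2$. The way out is that precisely in the bad case $\ii(\mathcal H\setminus v)$ is more highly connected than its parameter predicts --- discarding the edges through $v$ tends to isolate a vertex or create a dominating edge, making $\ii(\mathcal H\setminus v)$ contractible (as already happens for $C_5$, where $\ii(P_4)$ is contractible) --- and one must either fold this dichotomy into a strengthened induction hypothesis or dispatch it by a separate case analysis. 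Carrying out this dichotomy carefully, together with the degenerate cases (loops, isolated vertices, size-one edges appearing in $\mathcal H/v$, and the empty hypergraph as base case), is where essentially all the work lies; it is also the ingredient that the paper reuses to compute the homotopy type of $\nc(\mathcal H)$ for tight paths and cycles.
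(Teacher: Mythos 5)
Your reduction via Theorem~\ref{duality theorem} to the statement $\eta(\ii(\mathcal{H}))\geq\max\{\lceil\tilde{\gamma}/2\rceil,\gamma_{si},\gamma_E\}$ is correct, and your vertex-splitting Mayer--Vietoris inequality $\eta(\ii(\mathcal{H}))\geq\min\{\eta(\ii(\mathcal{H}\setminus v)),\eta(\ii(\mathcal{H}/v))+1\}$ is sound; the $\gamma_{si}$ branch (choosing $v\in W\setminus A$, passing to $A'$) can indeed be completed along the lines you describe. But for the other two parameters the proposal is not a proof. You yourself identify the obstruction --- after deleting $v$ the quantities $\tilde{\gamma}$ and $\gamma_E$ can drop, and no choice of $v$ need avoid this (your $C_5$ example) --- and then you only assert that ``precisely in the bad case $\ii(\mathcal{H}\setminus v)$ is more highly connected than its parameter predicts,'' that deleting the edges through $v$ ``tends to'' isolate a vertex or create a dominating edge, and that one must ``fold this dichotomy into a strengthened induction hypothesis or dispatch it by a separate case analysis.'' No such dichotomy is formulated, let alone proved, and it is exactly where the content of the theorem lies; for hypergraphs it is not at all clear that the bad case forces $\ii(\mathcal{H}\setminus v)$ to be contractible or $(\,g-2)$-connected. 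Likewise ``picks $v\in e_1\setminus\bigcup_{i\geq2}e_i$ in an attempt to keep $\gamma_E(\mathcal{H}\setminus v)=m$'' and the ``parity- and size-sensitive case analysis'' for $\lceil\tilde{\gamma}/2\rceil$ are placeholders, not arguments. So there is a genuine gap in two of the three branches.

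It is worth contrasting this with how the paper avoids the problem. The paper never splits at a vertex and never dualizes: it works directly with $\nc(\mathcal{H})$ and inducts on $|V(\mathcal{H})|+|\mathcal{H}|$ by removing a single \emph{edge}. For an inclusion-minimal edge $e$ one has $\nc(\mathcal{H})=\nc(\mathcal{H}-e)\cup\Delta_{V\setminus e}$ with intersection $\nc(\mathcal{H}\neg\,e)$ (Lemmas~\ref{noncov union} and~\ref{noncov intersection}), where $\mathcal{H}\neg\,e$ deletes the vertices of $e$ as well. The three parameters are then treated \emph{simultaneously} through the single function $g(\mathcal{H})=\max\{\lceil\tilde{\gamma}/2\rceil,\gamma_{si},\gamma_E\}$: deleting an edge never decreases any of the parameters, so $g(\mathcal{H}-e)\geq g(\mathcal{H})$, and Lemma~\ref{inequalities} produces one edge $e$ with $g(\mathcal{H}\neg\,e)\geq g(\mathcal{H})-|e|+1$, while the vertex count drops by $|e|$. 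This makes both Mayer--Vietoris terms strictly better placed for the induction, so there is no ``bad vertex'' case at all --- the edge-annihilation move is precisely the device that replaces the unproved dichotomy your plan relies on. If you want to salvage your route, you would need to prove hypergraph analogues of the graph-theoretic case analyses of Meshulam and Aharoni--Haxell for $\tilde{\gamma}$ and $\gamma_E$ under vertex deletion, which is substantially harder than the edge-based argument.
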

    By Theorem~\ref{duality theorem}, Theorem~\ref{eta dual} implies the hypergraph analogue of Theorem~\ref{eta graph}, giving a lower bound on $\eta(\ii(\mathcal{H}))$.
\begin{corollary}\label{eta}
	$\eta(\ii(\mathcal{H})) \geq \max\{\left\lceil\frac{\tilde{\gamma}(\mathcal{H})}{2}\right\rceil, \gamma_{si}(\mathcal{H}), \gamma_E(\mathcal{H})\}$ for every hypergraph $\mathcal{H}$.
\end{corollary}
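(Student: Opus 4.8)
We sketch the argument. Granting Theorem~\ref{eta dual}, the corollary is an immediate consequence of the duality theorem, so the plan is first to record that short deduction and then, since it is essentially automatic, to indicate how I would attack the substance, namely Theorem~\ref{eta dual} itself. Write $n := |V(\mathcal{H})|$ and $\gamma := \max\{\lceil \tilde\gamma(\mathcal{H})/2\rceil,\ \gamma_{si}(\mathcal{H}),\ \gamma_E(\mathcal{H})\}$. As noted in the introduction, $\nc(\mathcal{H}) = D(\ii(\mathcal{H}))$, so Theorem~\ref{duality theorem} gives $\tilde H_i(\nc(\mathcal{H})) \cong \tilde H_{n-i-3}(\ii(\mathcal{H}))$ for every $i$. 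By Theorem~\ref{eta dual} the left-hand side vanishes whenever $i \ge n-\gamma-1$; substituting $j = n-i-3$, this says exactly that $\tilde H_j(\ii(\mathcal{H})) = 0$ for all $j \le \gamma-2$, i.e. $\ii(\mathcal{H})$ is $(\gamma-2)$-connected, which is the claim $\eta(\ii(\mathcal{H})) \ge \gamma$. The degenerate regimes are consistent: if $\binom{V}{1}\subseteq\mathcal{H}$ then $\gamma = 0$ and $\eta(\ii(\mathcal{H}))\ge 0$ holds vacuously, while if $\mathcal{H}$ has an isolated vertex $v$ then $\gamma = \infty$ but $\ii(\mathcal{H})$ is a cone with apex $v$, hence contractible, so $\eta(\ii(\mathcal{H})) = \infty$ too (and correspondingly $\nc(\mathcal{H})$ is a cone with apex $v$, or void, matching the ``vanishing in every degree'' reading of Theorem~\ref{eta dual}).

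Since that deduction is automatic, the real content is Theorem~\ref{eta dual}, which by the same duality is equivalent to proving the three lower bounds $\eta(\ii(\mathcal{H})) \ge \lceil\tilde\gamma(\mathcal{H})/2\rceil$, $\eta(\ii(\mathcal{H})) \ge \gamma_{si}(\mathcal{H})$ and $\eta(\ii(\mathcal{H})) \ge \gamma_E(\mathcal{H})$ directly. I would establish these one at a time, adapting the graph arguments behind Theorem~\ref{eta graph}. The workhorse is the decomposition $\ii(\mathcal{H}) = \st_{\ii(\mathcal{H})}(v) \cup \ii(\mathcal{H}-v)$, in which the two pieces meet in the link of $v$ in $\ii(\mathcal{H})$; that link is again an independence complex, namely $\ii(\mathcal{H}'')$, where $\mathcal{H}''$ lives on $V\setminus\{v\}$ and has as edges those of $\mathcal{H}-v$ together with the traces $e\setminus\{v\}$ over edges $e\ni v$. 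As the star is contractible, Mayer--Vietoris yields the recursion $\eta(\ii(\mathcal{H})) \ge \min\{\eta(\ii(\mathcal{H}-v)),\ \eta(\ii(\mathcal{H}''))+1\}$. For the $\tilde\gamma$ and $\gamma_E$ bounds I would iterate this along, respectively, a minimum strongly-totally-dominating set and a minimum edgewise-dominating family, peeling off one vertex (one edge) at each step and bounding how fast the relevant parameter can drop. For the $\gamma_{si}$ bound I expect to instead use the nerve lemma: cover $\ii(\mathcal{H})$ by the stars of the vertices of a strongly independent witness set $A$ with $\gamma(\mathcal{H};A)=\gamma_{si}(\mathcal{H})$, note that intersections of such stars are themselves independence complexes of hypergraphs with controlled domination number, and conclude as in the graph case.

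For the corollary there is no obstacle — it is pure arithmetic with the shift $i\leftrightarrow n-i-3$. The genuine difficulty lies inside Theorem~\ref{eta dual}: one must verify that $\tilde\gamma$, $\gamma_{si}$ and $\gamma_E$ behave predictably under the deletion $\mathcal{H}\mapsto\mathcal{H}-v$ and, more delicately, under the trace operation $\mathcal{H}\mapsto\mathcal{H}''$ producing the link, which introduces smaller edges and can collapse the hypergraph in ways with no $2$-uniform analogue. I anticipate that this is exactly where \emph{strong total} domination is forced on us: among the three notions of domination recalled in the introduction, it should be the one for which ``$W$ dominates $v$'' is stable under passing to the link, so that the inductive bookkeeping closes. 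Checking these monotonicity statements carefully, and isolating the right vertex or edge to peel at each step, is where the real work of the proof lies.
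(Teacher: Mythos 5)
Your deduction of the corollary is exactly the paper's: granting Theorem~\ref{eta dual}, apply Theorem~\ref{duality theorem} with $K=\ii(\mathcal{H})$, $D(K)=\nc(\mathcal{H})$, and the index shift $j=n-i-3$ to get $\tilde H_j(\ii(\mathcal{H}))=0$ for all $j\le \gamma-2$, i.e.\ $\eta(\ii(\mathcal{H}))\ge\gamma$, and your handling of the degenerate cases is consistent with the paper's conventions. The additional sketch of how one might prove Theorem~\ref{eta dual} directly on the independence-complex side is not needed for this statement (the paper proves that theorem separately, via edge-annihilation and Mayer--Vietoris on noncover complexes), so as a proof of the corollary your argument is correct and essentially identical to the paper's.
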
    

    The proof of Theorem~\ref{eta dual} will be given in Section~\ref{proof main 1}.
    In Section~\ref{genpos}, we present an alternative proof of the main result in \cite{HMM16} as an application of Corollary~\ref{eta}.
    Our proof method also can be applied to generalize \cite[Claim~3.3]{Mes03} which computes the reduced Betti numbers of the independence complexes of paths and cycles.
    In Section~\ref{tight hypergraphs}, we compute the  homotopy types as well as the reduced Betti numbers of the noncover complexes of certain uniform hypergraphs, called {\em tight paths} and {\em tight cycles}.

    \smallskip

	The second part of this paper strengthens Theorem~\ref{eta dual} for some cases.
	We prove upper bounds of $L(\nc(\mathcal{H}))$ in terms of $\tilde{\gamma}(\mathcal{H})$, $\gamma_{si}(\mathcal{H})$, and $\gamma_E(\mathcal{H})$.
	\begin{theorem}\label{leray numbers}
	Let $\mathcal{H}$ be a hypergraph with no isolated vertices.
	Then 
	\begin{enumerate}[(a)]
	\item If $|e| \leq 3$ for every $e \in \mathcal{H}$, then $L(\nc(\mathcal{H})) \leq |V(\mathcal{H})| - \left\lceil\frac{\tilde{\gamma}(\mathcal{H})}{2}\right\rceil -1$.
	\item If $|e| \leq 2$ for every $e \in \mathcal{H}$, then $L(\nc(\mathcal{H})) \leq |V(\mathcal{H})| - \gamma_{si}(\mathcal{H}) -1$.
	\item $L(\nc(\mathcal{H})) \leq |V(\mathcal{H})| - \gamma_{E}(\mathcal{H}) -1$.
	\end{enumerate}
	\end{theorem}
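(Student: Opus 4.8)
The plan is to bound $L(\nc(\mathcal{H}))$ by running over all induced subcomplexes of $\nc(\mathcal{H})$ and feeding them into Theorem~\ref{eta dual}. The basic observation is that for every $W\subseteq V(\mathcal{H})$ one has $\nc(\mathcal{H})[W]=\nc(\mathcal{H}|_W)$, where $\mathcal{H}|_W:=\{e\cap W:e\in\mathcal{H}\}$: a face of $\nc(\mathcal{H})$ contained in $W$ is a set $\sigma\subseteq W$ disjoint from some edge $e$, equivalently disjoint from $e\cap W$. (If $e\cap W=\emptyset$ for some $e\in\mathcal{H}$, then $\nc(\mathcal{H})[W]$ is the full simplex on $W$, which is acyclic, and $W$ may be discarded; otherwise $\mathcal{H}|_W$ is an honest hypergraph, and it has no isolated vertices because $\mathcal{H}$ has none.) Since $L(K)\le d$ precisely when $\tilde H_i(K[W])=0$ for all $i\ge d$ and all $W$, applying Theorem~\ref{eta dual} to $\mathcal{H}|_W$ reduces each of (a), (b), (c) to the corresponding ``restriction inequality'', to be established for all $W$ with $t:=|V(\mathcal{H})|-|W|$:
\begin{enumerate}[(a)]
\item $\tilde\gamma(\mathcal{H}|_W)\ge\tilde\gamma(\mathcal{H})-2t$ when $|e|\le 3$ for every $e\in\mathcal{H}$;
\item $\gamma_{si}(\mathcal{H}|_W)\ge\gamma_{si}(\mathcal{H})-t$ when $|e|\le 2$ for every $e\in\mathcal{H}$;
\item $\gamma_E(\mathcal{H}|_W)\ge\gamma_E(\mathcal{H})-t$.
\end{enumerate}
Indeed, in each case the displayed parameter of $\mathcal{H}|_W$ is one of the three in the maximum of Theorem~\ref{eta dual}, and for (a) the inequality $\lceil \tilde\gamma(\mathcal{H}|_W)/2\rceil\ge\lceil\tilde\gamma(\mathcal{H})/2\rceil-t$ follows by dividing by $2$ and taking ceilings. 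In all three cases I would induct on $t$ and reduce to a single deletion $W=V(\mathcal{H})\setminus\{x\}$; the edge-size bounds and the absence of isolated vertices are inherited by $\mathcal{H}|_{V\setminus x}$, so the induction is legitimate.

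For (c), take an edge family $\mathcal{F}'\subseteq\mathcal{H}|_{V\setminus x}$ of size $\gamma_E(\mathcal{H}|_{V\setminus x})$ whose union dominates $V\setminus x$, replace each member by a preimage in $\mathcal{H}$, and adjoin one edge of $\mathcal{H}$ through $x$ (possible since $x$ is not isolated). The union $U$ of the resulting family of at most $\gamma_E(\mathcal{H}|_{V\setminus x})+1$ edges contains $x$, hence dominates $x$; and if $W'\cup\{v\}=e\cap(V\setminus x)$ witnesses the domination of $v\in V\setminus x$ in $\mathcal{H}|_{V\setminus x}$, then $e$ itself witnesses it in $\mathcal{H}$, with $e\setminus\{v\}\subseteq U$. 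Thus $U$ dominates $V$ and $\gamma_E(\mathcal{H})\le\gamma_E(\mathcal{H}|_{V\setminus x})+1$. For (b), let $A$ be a strongly independent set of $\mathcal{H}$ with $\gamma(\mathcal{H};A)=\gamma_{si}(\mathcal{H})$ and put $A':=A\setminus\{a\in A:\{a,x\}\in\mathcal{H}\}$. Strong independence of $A$ prevents any edge of $\mathcal{H}|_{V\setminus x}$ — inherited or created by shrinking — from lying inside $A'$ and forces $|e'\cap A'|\le 1$ for all such $e'$, so $A'$ is strongly independent in $\mathcal{H}|_{V\setminus x}$; and if $D'$ dominates $A'$ in $\mathcal{H}|_{V\setminus x}$ then $D'\cup\{x\}$ dominates $A$ in $\mathcal{H}$ (handle vertices of $A'$ as in (c), and note $x$ dominates any $a$ with $\{a,x\}\in\mathcal{H}$). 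Hence $\gamma(\mathcal{H};A)\le\gamma(\mathcal{H}|_{V\setminus x};A')+1\le\gamma_{si}(\mathcal{H}|_{V\setminus x})+1$.

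The crux is (a). Fix a minimum dominating set $D'$ of $\mathcal{H}|_{V\setminus x}$. Exactly as in (c), $D'\cup\{x\}$ already dominates every vertex of $V\setminus x$ in $\mathcal{H}$, so it remains only to dominate $x$ at a cost of at most one more vertex. If $x$ lies in an edge of size at most $2$ this is free (a loop) or costs one vertex (the other endpoint). Otherwise $x$ lies only in edges of size exactly $3$; if moreover $D'$ itself dominates all of $V\setminus x$ in $\mathcal{H}$, pick any $\{x,a,b\}\in\mathcal{H}$ and note that $D'\cup\{a,b\}$ dominates $V$. In the remaining case some $v\in V\setminus x$ is dominated by $D'$ in $\mathcal{H}|_{V\setminus x}$ only with the help of $x$, so — using $|e|\le 3$ — the witnessing edge has the form $e_v=\{w,v,x\}$ with $w\in D'$, and the point is that this single edge can be recycled: in $D'\cup\{x,v\}$ the pair $\{w,x\}$ dominates $v$ and the pair $\{w,v\}$ dominates $x$, so $D'\cup\{x,v\}$ dominates $V$. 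In every case $\tilde\gamma(\mathcal{H})\le\tilde\gamma(\mathcal{H}|_{V\setminus x})+2$, which is what the reduction requires. I expect this ``edge-recycling'' construction, together with the verification that the case distinction above really exhausts the ways $x$ can be forced into a minimum dominating set of $\mathcal{H}$, to be the only genuinely delicate point; the rest — in particular the identity $\nc(\mathcal{H})[W]=\nc(\mathcal{H}|_W)$ and the homological reduction — is routine.
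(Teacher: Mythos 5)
Your overall architecture---identify $\nc(\mathcal{H})[W]$ with $\nc(\mathcal{H}|_W)$ (discarding the degenerate $W$ missed by an edge), prove that each domination parameter drops by a controlled amount under deletion of a single vertex, and feed each restriction into Theorem~\ref{eta dual}---is exactly the paper's strategy for parts (b) and (c): the paper writes $\mathcal{H}\neg\,v=\{e\setminus\{v\}:e\in\mathcal{H}\}$, notes $\nc(\mathcal{H}\neg\,v)=\nc(\mathcal{H})[V\setminus\{v\}]$, and proves $\gamma_{si}(\mathcal{H}\neg\,v)\ge\gamma_{si}(\mathcal{H})-1$ and $\gamma_E(\mathcal{H}\neg\,v)\ge\gamma_E(\mathcal{H})-1$, so your (c) and (most of) your (b) reproduce its argument. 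Where you genuinely depart is (a): the paper proves no vertex-deletion inequality for $\tilde{\gamma}$ at all; it instead inducts on $|\mathcal{H}|+|V|$ using the decomposition $\nc(\mathcal{H})=\nc(\mathcal{H}-e)\cup\Delta_{V\setminus e}$ with intersection $\nc(\mathcal{H}\neg\,e)$, the bound $L(K)\le\max\{L(A),L(B),L(A\cap B)+1\}$, and a separate claim (using $|e|\le 3$) controlling $\tilde{\gamma}$ when deleting an edge creates isolated vertices. I checked your single-deletion inequality $\tilde{\gamma}(\mathcal{H}|_{V\setminus x})\ge\tilde{\gamma}(\mathcal{H})-2$ and it holds: the three cases interlock so that at most two vertices ever need to be added to $D'$ (if every edge through $x$ has size $3$ and misses $D'$ entirely, then no vertex of $V\setminus x$ can have a shrunken-only witness, so you are in the case where $D'$ alone dominates $V\setminus x$ in $\mathcal{H}$), and the hypothesis $|e|\le 3$ enters exactly where it must---with $4$-edges through $x$ disjoint from $D'$ you would be forced to add three vertices, consistent with Example~\ref{gamma tilde example}. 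So your (a) is a legitimate and arguably cleaner alternative: it needs only Theorem~\ref{eta dual} plus an elementary domination lemma, with no further Mayer--Vietoris argument at the level of Leray numbers.

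There is one genuine gap, in (b): your construction silently assumes $x\notin A$. If $x\in A$ (which can certainly happen for the optimal strongly independent set), then by independence no $a\in A$ has $\{a,x\}\in\mathcal{H}$, so your $A'$ equals $A$ and contains $x$, which is not even a vertex of $\mathcal{H}|_{V\setminus x}$; worse, $D'\cup\{x\}$ need not dominate $x$, since strong total domination of $x$ requires the other endpoint of an edge through $x$ to lie in the set, and nothing forces $D'$ to contain such a vertex. The repair is the paper's second case: since $x$ is not isolated, $A$ is independent and $|e|\le 2$, there is $u\notin A$ with $\{u,x\}\in\mathcal{H}$; take $A'=A\setminus\{x\}$, observe that independence of $A$ excludes shrunken witnesses for its vertices (a loop $\{a\}$ of $\mathcal{H}|_{V\setminus x}$ with $a\in A$ would come from the edge $\{a,x\}\subseteq A$), so a set dominating $A'$ in $\mathcal{H}|_{V\setminus x}$ already dominates it in $\mathcal{H}$, and adjoin $u$ to handle $x$. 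Finally, a cosmetic slip in (c): ``$U$ contains $x$, hence dominates $x$'' is not a valid inference for strong total domination; what dominates $x$ is that $U$ contains $A_x\setminus\{x\}$ for the adjoined edge $A_x\ni x$, which your construction does provide, so that step is fine once rephrased.
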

	Note that if a hypergraph $\mathcal{H} \neq \emptyset$ contains an isolated vertex $v$, then the noncover complex $\nc(\mathcal{H})$ is a cone with apex $v$, so it is contractible.
	Hence $L(\nc(\mathcal{H})) = L(\nc(\mathcal{H}'))$, where $\mathcal{H}'$ is the hypergraph obtained from $\mathcal{H}$ by removing all isolated vertices.
	
    The restrictions on the size of edges in (a) and (b) are necessary: see Example~\ref{gamma tilde example} for (a) and Example~\ref{gammasi example} for (b).
    In Section~\ref{sec:rainbow cover}, we present an application of Theorem~\ref{leray numbers} to a ``rainbow cover theorem''.

\begin{rmk}
An extended abstract of this paper can be found in the Proceedings of the 32nd Conference on Formal Power Series and Algebraic Combinatorics (Online) \cite{fpsac}.
\end{rmk}

\medskip
\section{Proof of Theorem~\ref{eta dual}}\label{proof main 1}
In this section, we give a proof of Theorem~\ref{eta dual}.
We start with defining an edge operation on hypergraphs.

\subsection{Edge annihilation}
Let $\mathcal{H}$ be a hypergraph on $V$ and $e \in \mathcal{H}$.
Throughout this paper, we denote the subhypergraph $\mathcal{H} \setminus \{e\}$ by $\mathcal{H} - e$.
Also, we define the {\em edge-annihilation} of $e$ in $\mathcal{H}$ as an operation of obtaining the hypergraph
\[\mathcal{H} \neg ~e := \{ f \setminus e : f \in \mathcal{H}\;\text{and}\;f\nsubseteq e\}\]
from $\mathcal{H}$.
Note that $\mathcal{H} - e$ is a hypergraph on $V$ and $\mathcal{H} \neg ~e$ is a hypergraph on $V \setminus e$.
See Figure~\ref{fig:edge-annihilation} for the illustration of an edge-annihilation.
	\begin{figure}[htbp]
    \centering
    \includegraphics[scale=0.6]{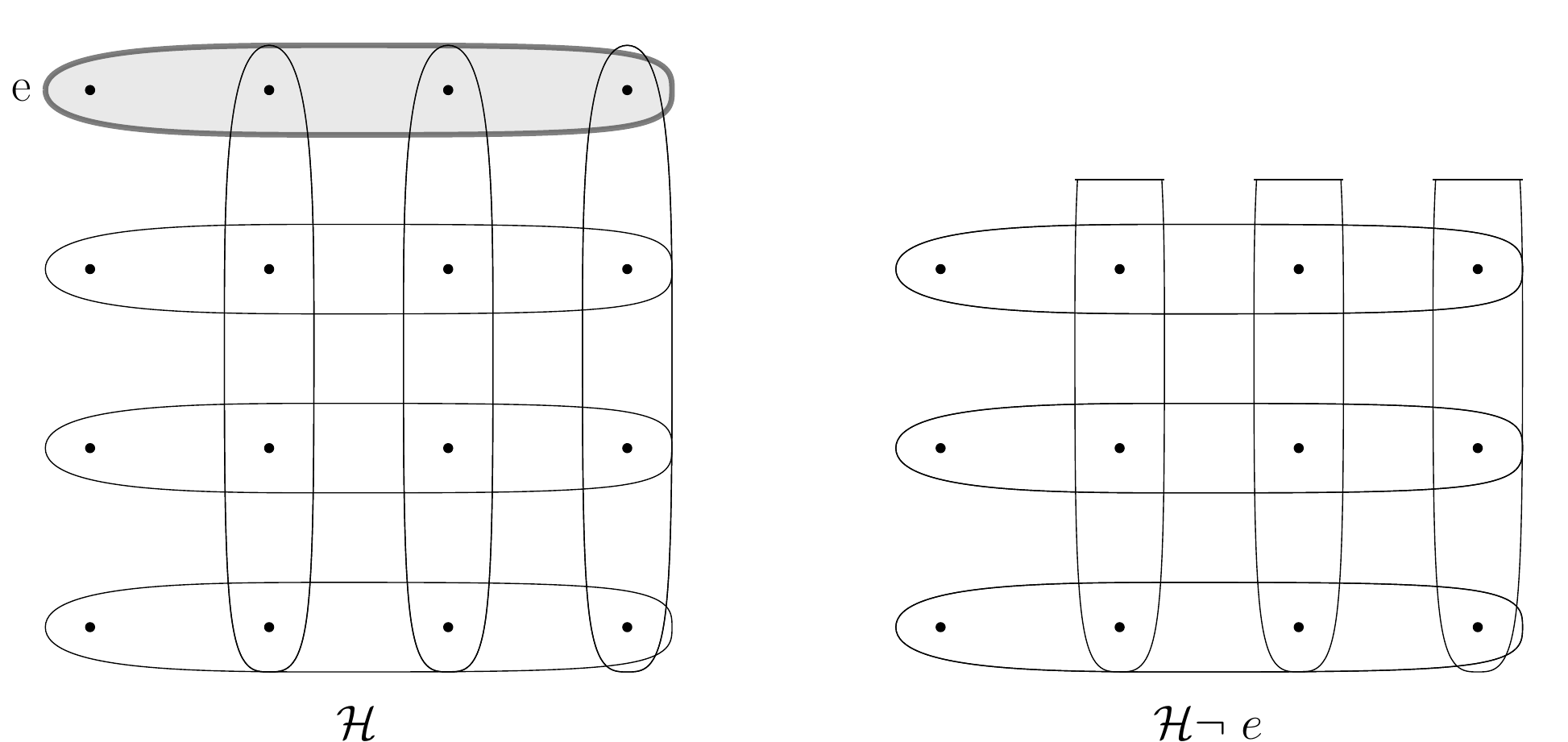}
    \caption{$\mathcal{H}\neg~e$ is obtained from $\mathcal{H}$ by annihilating the edge $e$.}
    \label{fig:edge-annihilation}
\end{figure}

We will show some relations between the domination parameters of $\mathcal{H}$ and those of $\mathcal{H} \neg~e$.
This is a hypergraph analogue of Meshulam's observation for the case of graphs \cite[Theorem 1.2]{Me s03}.

\begin{lemma}\label{inequalities}
Let $\mathcal{H}$ be a hypergraph with vertex set $V \neq \emptyset$.
If $\mathcal{H}$ has no isolated vertices, then each of the following holds:
\begin{enumerate}[(i)]
\item $\tilde{\gamma}(\mathcal{H} \neg~e) \geq \tilde{\gamma}(\mathcal{H}) - 2|e| + 2$ for every $e \in \mathcal{H}$ with $|e| \geq 2$.

\smallskip

\item If $\binom{V}{1} \nsubseteq \mathcal{H}$, then there exists $e \in \mathcal{H}$ such that $|e| \geq 2$ and $\gamma_{si}(\mathcal{H} \neg ~e) \geq \gamma_{si}(\mathcal{H}) - |e| + 1$.

\smallskip

\item $\gamma_E(\mathcal{H}\neg~e) \geq \gamma_E(\mathcal{H})-|e|+1$ for every $e \in \mathcal{H}$ with $|e| \geq 2$.
\end{enumerate}
	\begin{proof}
\begin{enumerate}[(i)]
\item Take any edge $e \in \mathcal{H}$ with $|e| \geq 2$, and let $S \subset V(\mathcal{H} \neg~e)$ be a minimum set which dominates $V\setminus e$ in $\mathcal{H}\neg~e$.
	Then $S \cup e$ dominates $V$ in $\mathcal{H}$, thus we have
	\[\tilde{\gamma}(\mathcal{H} \neg~ e) + |e| = |S \cup e| \geq \tilde{\gamma}(\mathcal{H}).\]
	Since $|e| \geq 2$, we obtain $\tilde{\gamma}(\mathcal{H} \neg~ e) \geq \tilde{\gamma}(\mathcal{H}) - |e| \geq \tilde{\gamma}(\mathcal{H}) - 2|e| + 2$.
	
	\smallskip
	
\item Let $A$ be a strongly independent set of $\mathcal{H}$ such that $\gamma_{si}(\mathcal{H}) = \gamma(\mathcal{H}; A)$.
    Clearly, $A \neq \emptyset$ since $\binom{V}{1} \nsubseteq \mathcal{H}$.
	Pick $v \in A$.
	Since $\mathcal{H}$ contains no isolated vertices in $\mathcal{H}$, there exists $e \in \mathcal{H}$ that contains $v$.
	Note that since $A$ is an independent set of $\mathcal{H}$, $\{v\}$ cannot be an edge in $\mathcal{H}$, thus we have $|e| \geq 2$.
	Let $A'$ be the set of all vertices in $A$ that is not dominated by $e \setminus \{v\}$.
	Then $A'$ is a strongly independent set of $\mathcal{H} \neg~e$.
	Let $S \subset V \setminus e$ be a minimum set which dominates $A'$ in $\mathcal{H} \neg~ e$. 
	Then $S \cup (e\setminus\{v\})$ dominates $A$ in $\mathcal{H}$, thus we have
	\[ \gamma_{si}(\mathcal{H} \neg~ e) + (|e| - 1) \geq |S \cup (e\setminus\{v\})| \geq \gamma_{si}(\mathcal{H}).\]

	\item Let $e$ be an edge in $\mathcal{H}$ with $|e| \geq 2$, and let $\mathcal{F}$ be a subgraph of $\mathcal{H} \neg~ e$ such that $\bigcup_{A \in \mathcal{F}}A$ dominates $V\setminus e$ in $\mathcal{H} \neg~ e$ and $|\mathcal{F}| =\gamma_E(\mathcal{H} \neg~ e)$.
	For each $A \in \mathcal{F}$, there exists $e_A \subset e$ (possibly empty) such that $A \cup e_A \in \mathcal{H}$.
	Since $e \cup \left(\bigcup_{A\in\mathcal{F}} (A \cup e_A)\right)$ dominates $V$ in $\mathcal{H}$, we have
		\[\gamma_E(\mathcal{H} \neg~ e) + (|e| - 1) \geq \gamma_E(\mathcal{H} \neg~ e) + 1 \geq \gamma_E(\mathcal{H}).\]
\end{enumerate}
	\end{proof}
\end{lemma}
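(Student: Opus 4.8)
The plan is to prove all three inequalities with a single device: take a minimum-size object in $\mathcal{H}\neg~e$ that witnesses the relevant domination parameter and \emph{lift} it back to $\mathcal{H}$ by adding (essentially) the vertex set $e$. The bridge between the two hypergraphs is the correspondence $f\mapsto f\setminus e$ carrying $\{f\in\mathcal{H}:f\nsubseteq e\}$ onto $\mathcal{H}\neg~e$: if $W'\cup\{u\}=f\setminus e$ is an edge of $\mathcal{H}\neg~e$ and $u\notin e$, then $(W'\cup(f\cap e))\cup\{u\}=f\in\mathcal{H}$ and $f\cap e\subseteq e$, so $(W'\cup(f\cap e))\setminus\{u\}\subseteq W'\cup e$. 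Consequently, whenever $S\subseteq V\setminus e$ dominates a vertex $u\notin e$ in $\mathcal{H}\neg~e$, the enlarged set $S\cup e$ (and, with care, even $S\cup(e\setminus\{v\})$) dominates $u$ in $\mathcal{H}$. I will also use the elementary observation that $\mathcal{H}\neg~e$ has no isolated vertex on $V\setminus e$ — any edge $f\ni v$ with $v\notin e$ satisfies $f\nsubseteq e$ — so every parameter invoked below is finite.

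Parts (i) and (iii) are then routine. For (i), let $S$ be a minimum set dominating $V\setminus e$ in $\mathcal{H}\neg~e$. I claim $S\cup e$ dominates $V$ in $\mathcal{H}$: each vertex of $e$ is dominated by the edge $e$ itself (here $e\in\mathcal{H}$ and $|e|\geq 2$), and each vertex of $V\setminus e$ is handled by the lifting observation. Hence $\tilde{\gamma}(\mathcal{H})\leq|S\cup e|\leq\tilde{\gamma}(\mathcal{H}\neg~e)+|e|$, i.e. $\tilde{\gamma}(\mathcal{H}\neg~e)\geq\tilde{\gamma}(\mathcal{H})-|e|$, and since $|e|\geq 2$ gives $-|e|\geq-2|e|+2$, the stated bound follows. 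For (iii), let $\mathcal{F}\subseteq\mathcal{H}\neg~e$ be a minimum family whose union dominates $V\setminus e$; choosing for each $A\in\mathcal{F}$ some $f_A\in\mathcal{H}$ with $f_A\setminus e=A$, the family $\mathcal{F}'=\{f_A:A\in\mathcal{F}\}\cup\{e\}$ has at most $|\mathcal{F}|+1$ members, and $\bigcup\mathcal{F}'\supseteq e\cup\bigcup\mathcal{F}$ dominates $V$ in $\mathcal{H}$ by the same two-case argument; thus $\gamma_E(\mathcal{H})\leq\gamma_E(\mathcal{H}\neg~e)+1\leq\gamma_E(\mathcal{H}\neg~e)+|e|-1$.

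The real work is in (ii), and I expect that to be the main obstacle, for two reasons: we no longer get to add back all of $e$ (only $e\setminus\{v\}$), and we must \emph{produce} the right edge $e$ rather than being handed one. I would start from a strongly independent set $A$ with $\gamma(\mathcal{H};A)=\gamma_{si}(\mathcal{H})$; the hypothesis $\binom{V}{1}\nsubseteq\mathcal{H}$ forces $A\neq\emptyset$, so fix $v\in A$ and, using that $\mathcal{H}$ has no isolated vertices, an edge $e\ni v$. Independence of $A$ gives $|e|\geq 2$, and strong independence gives $A\cap e=\{v\}$. Let $A'$ be the set of vertices of $A$ \emph{not} dominated by $e\setminus\{v\}$ in $\mathcal{H}$; then $v\notin A'$ (the edge $e$ dominates $v$) and $A'\subseteq(A\setminus\{v\})\subseteq V\setminus e$. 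Two claims must be checked, and this is the delicate part. First, $A'$ is strongly independent in $\mathcal{H}\neg~e$: any edge $f\setminus e$ of $\mathcal{H}\neg~e$ meets $A'$ in at most $|f\cap A|\leq 1$ vertices, and a loop $\{w\}$ of $\mathcal{H}\neg~e$ with $w\in A'$ would arise from $f\in\mathcal{H}$ with $f\subseteq e\cup\{w\}$, whence either $v\notin f$ (then $e\setminus\{v\}$ dominates $w$, contradicting $w\in A'$) or $v\in f$ (then $\{v,w\}\subseteq f\cap A$, contradicting strong independence). Second, if $S\subseteq V\setminus e$ dominates $A'$ in $\mathcal{H}\neg~e$, then $S\cup(e\setminus\{v\})$ dominates all of $A$ in $\mathcal{H}$: vertices of $A$ already dominated by $e\setminus\{v\}$ are immediate, and for $u\in A'$ one lifts a witness $W'\cup\{u\}=f\setminus e$ as above, observing that $v\notin f$ (else $\{u,v\}\subseteq f\cap A$), so $f\cap e\subseteq e\setminus\{v\}$ and the lifted witness stays inside $S\cup(e\setminus\{v\})$. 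Granting these, $\gamma_{si}(\mathcal{H})=\gamma(\mathcal{H};A)\leq|S\cup(e\setminus\{v\})|\leq\gamma(\mathcal{H}\neg~e;A')+(|e|-1)\leq\gamma_{si}(\mathcal{H}\neg~e)+|e|-1$, as desired.
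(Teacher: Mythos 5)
Your proposal is correct and takes essentially the same route as the paper's proof: in each part you lift a minimum witness for the domination parameter of $\mathcal{H}\neg~e$ back to $\mathcal{H}$ by adjoining $e$ (or $e\setminus\{v\}$ in (ii)), choosing $e$ in (ii) through a vertex of an optimal strongly independent set exactly as the paper does. The only difference is that you spell out the verifications the paper leaves implicit (that $A'$ is strongly independent in $\mathcal{H}\neg~e$ and that the lifted witnesses for $u\in A'$ avoid $v$), and these checks are accurate.
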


Lemma~\ref{inequalities} gives us the following corollary.
\begin{corollary}\label{cor:ineq}
Let $\mathcal{H}$ be a hypergraph with vertex set $V \neq \emptyset$. Let \[g(\mathcal{H}) = \max\{ \left\lceil\frac{\tilde{\gamma}(\mathcal{H})}{2}\right\rceil, \gamma_{si}(\mathcal{H}), \gamma_E(\mathcal{H})\}.\]
For every $e \in \mathcal{H}$, $g(\mathcal{H} - e) \geq g(\mathcal{H})$.
Moreover, if $\mathcal{H}$ has no isolated vertices and $\binom{V}{1} \nsubseteq \mathcal{H}$, then there exists $e \in \mathcal{H}$ such that $g(\mathcal{H} \neg~e) \geq g(\mathcal{H}) - |e| + 1$.
\end{corollary}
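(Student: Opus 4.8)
The plan is to treat the two assertions separately. The first one I would deduce from the fact that all three domination parameters are monotone non-decreasing under edge deletion; the second one I would deduce from Lemma~\ref{inequalities} applied to a single, carefully chosen edge, checking that that one edge simultaneously controls all three parameters.

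For the first assertion, the key observation is that the relation ``$W$ dominates $v$'' only gets harder to satisfy when edges are removed: if $W$ dominates $v$ in $\mathcal{H}-e$, witnessed by $W'\subseteq W\setminus\{v\}$ with $W'\cup\{v\}\in\mathcal{H}-e\subseteq\mathcal{H}$, then $W$ dominates $v$ in $\mathcal{H}$. Hence $\gamma(\mathcal{H}-e;A)\geq\gamma(\mathcal{H};A)$ for every $A\subseteq V$, which at once gives $\tilde{\gamma}(\mathcal{H}-e)\geq\tilde{\gamma}(\mathcal{H})$. For $\gamma_{si}$ I would also note that a strongly independent set of $\mathcal{H}$ is still strongly independent in $\mathcal{H}-e$ (both defining conditions are inherited by subfamilies of the edge set), so applying the previous inequality to a set $A$ realizing $\gamma_{si}(\mathcal{H})$ yields $\gamma_{si}(\mathcal{H}-e)\geq\gamma(\mathcal{H}-e;A)\geq\gamma(\mathcal{H};A)=\gamma_{si}(\mathcal{H})$. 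For $\gamma_E$, any subfamily of $\mathcal{H}-e$ whose union dominates $V$ in $\mathcal{H}-e$ is also a subfamily of $\mathcal{H}$ whose union dominates $V$ in $\mathcal{H}$, so $\gamma_E(\mathcal{H})\leq\gamma_E(\mathcal{H}-e)$. Taking the maximum of the three bounds gives $g(\mathcal{H}-e)\geq g(\mathcal{H})$. (If either hypergraph has an isolated vertex the relevant values are $\infty$ and the inequalities are automatic.)

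For the second assertion, I would let $e$ be the edge produced by Lemma~\ref{inequalities}(ii), so that $|e|\geq2$ and $\gamma_{si}(\mathcal{H}\neg e)\geq\gamma_{si}(\mathcal{H})-|e|+1$. Because $|e|\geq 2$, parts (i) and (iii) of the same lemma apply to this very edge, giving $\tilde{\gamma}(\mathcal{H}\neg e)\geq\tilde{\gamma}(\mathcal{H})-2|e|+2$ and $\gamma_E(\mathcal{H}\neg e)\geq\gamma_E(\mathcal{H})-|e|+1$; since $1-|e|$ is an integer, the first of these upgrades to $\lceil\tilde{\gamma}(\mathcal{H}\neg e)/2\rceil\geq\lceil\tilde{\gamma}(\mathcal{H})/2\rceil-|e|+1$. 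Thus each of the three quantities defining $g(\mathcal{H}\neg e)$ is bounded below by the corresponding quantity for $g(\mathcal{H})$ minus $|e|-1$, and taking the maximum gives $g(\mathcal{H}\neg e)\geq g(\mathcal{H})-|e|+1$, as required (again, if $\mathcal{H}\neg e$ has an isolated vertex the bound is trivial). The argument is mostly bookkeeping, and the one point deserving care is precisely this: the second assertion needs a single edge that governs all three parameters at once, which forces the choice through part (ii) — the only part not valid for arbitrary edges — and makes it essential that part (ii) guarantees $|e|\geq2$ so that parts (i) and (iii) remain usable for the same $e$; the ceiling step on the $\tilde{\gamma}$-term is the other small thing to verify.
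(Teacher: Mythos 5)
Your proof is correct and follows exactly the route the paper intends: the first assertion is the (easy) monotonicity of all three domination parameters under edge deletion, and the second is obtained by choosing the edge $e$ supplied by Lemma~\ref{inequalities}(ii) and then applying parts (i) and (iii) to that same $e$ (legitimate since (ii) guarantees $|e|\geq 2$), with the ceiling absorbed via $\lceil\tilde{\gamma}(\mathcal{H})/2\rceil-(|e|-1)$. The paper leaves these details implicit, and your write-up, including the handling of the infinite/isolated-vertex cases, fills them in correctly.
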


\subsection{Noncover complexes for hypergraphs}
When we compute the homology of noncover complexes of hypergraphs, we may assume that every edge is inclusion-minimal.
This observation will be used many times throughout the paper.
\begin{observation}\label{noncover inclusion-minimal}
Let $\mathcal{H}$ be a hypergraph on $V$ and suppose there are two edges $e,f \in \mathcal{H}$ such that $f \subset e$.
Since $V \setminus e \subset V \setminus f$, deleting $e$ from $\mathcal{H}$ does not affect the noncover complex, i.e. $\nc(\mathcal{H}) = \nc(\mathcal{H} - e)$.
\end{observation}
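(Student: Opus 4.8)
The plan is to argue directly from the definition of $\nc(\cdot)$, via the complementation characterization of noncovers. Recall that for any hypergraph $\mathcal{G}$ on $V$, a set $W \subseteq V$ lies in $\nc(\mathcal{G})$ if and only if $W$ fails to meet some edge of $\mathcal{G}$; equivalently, $V \setminus W$ contains an edge of $\mathcal{G}$, i.e. $V \setminus W \notin \ii(\mathcal{G})$. So it suffices to show that this condition holds for $\mathcal{H}$ exactly when it holds for $\mathcal{H} - e$.

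First I would note the trivial inclusion $\nc(\mathcal{H} - e) \subseteq \nc(\mathcal{H})$: every edge of $\mathcal{H} - e$ is an edge of $\mathcal{H}$, so any $W$ disjoint from an edge of $\mathcal{H} - e$ is disjoint from an edge of $\mathcal{H}$. For the reverse inclusion, take $W \in \nc(\mathcal{H})$ together with an edge $g \in \mathcal{H}$ disjoint from $W$. If $g \neq e$, then $g$ is an edge of $\mathcal{H} - e$ and already witnesses $W \in \nc(\mathcal{H} - e)$. If $g = e$, then $f \subseteq e$ forces $f$ to be disjoint from $W$ as well; since $f$ and $e$ are distinct edges, $f$ survives in $\mathcal{H} - e$, and again $W \in \nc(\mathcal{H} - e)$. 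Combining the two inclusions gives $\nc(\mathcal{H}) = \nc(\mathcal{H} - e)$.

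There is essentially no obstacle here; the only point requiring a moment's care is the use of the standing assumption that a hypergraph has no two identical edges, which is exactly what guarantees that $f$ is not removed when $e$ is deleted. Alternatively, one may run the argument at the level of facets, as already hinted in the statement: the maximal faces of $\nc(\mathcal{G})$ are precisely the complements $V \setminus g$ of edges $g$ of $\mathcal{G}$, and the containment $V \setminus e \subseteq V \setminus f$ shows that $V \setminus e$ is never a facet of $\nc(\mathcal{H})$, so deleting $e$ changes neither the facet set nor the complex it generates.
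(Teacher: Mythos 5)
Your argument is correct and is essentially the paper's own justification: the paper's observation rests precisely on the containment $V\setminus e\subset V\setminus f$, which your witness-swapping (replace the edge $e$ by the surviving edge $f\subset e$) and your facet-level remark both spell out. No gap; the elaboration about distinctness of $e$ and $f$ is a fine but minor point already covered by the paper's standing convention.
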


The proof of Theorem~\ref{eta dual} is based on the Mayer-Vietoris exact sequence.
Let $K$ be an abstract simplicial complex and let $A$ and $B$ be complexes such that $K = A \cup B$.
Then the following sequence is exact:
\[\cdots \to \tilde{H}_i (A \cap B) \to \tilde{H}_i(A) \oplus \tilde{H}_i(B) \to \tilde{H}_i(K) \to \tilde{H}_{i-1} (A \cap B) \to \cdots \]
In particular, for any integer $i_0$, if $\tilde{H}_i(A) = \tilde{H}_i(B) = \tilde{H}_{i-1}(A \cap B) = 0$ for all $i \geq i_0$ then $\tilde{H}_i(K) = 0$ for all $i \geq i_0$.

Let $\mathcal{H}$ be a hypergraph on $V$.
For $X \subset V$, let $\Delta_X$ be the simplex on $X$.
\begin{lemma}\label{noncov union}
For every edge $e$ in $\mathcal{H}$, we have $\nc(\mathcal{H}) = \nc(\mathcal{H} - e) \cup \Delta_{V \setminus e}$.
\end{lemma}
\begin{proof}
It is obvious from the definition $\nc(\mathcal{H}) = \bigcup_{e \in \mathcal{H}} \Delta_{V \setminus e}$.
\end{proof}

\begin{lemma}\label{noncov intersection}
For any inclusion-minimal edge $e \in \mathcal{H}$, we have 
\[\nc(\mathcal{H} - e) \cap \Delta_{V \setminus e} = \nc(\mathcal{H} \neg ~e).\]
\end{lemma}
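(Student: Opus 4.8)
The plan is to prove the set equality $\nc(\mathcal{H} - e) \cap \Delta_{V \setminus e} = \nc(\mathcal{H} \neg\, e)$ by double inclusion, checking which subsets of $V \setminus e$ land on each side. Note that both complexes live on the ground set $V \setminus e$: the left-hand side is a subcomplex of $\Delta_{V\setminus e}$ by construction, and $\nc(\mathcal{H}\neg e)$ is a complex on $V(\mathcal{H}\neg e) = V\setminus e$. So fix $W \subseteq V \setminus e$; I want to show $W \in \nc(\mathcal{H}-e)$ if and only if $W \in \nc(\mathcal{H}\neg e)$, since the condition $W \in \Delta_{V\setminus e}$ is automatic.

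Unwinding the definitions: $W \in \nc(\mathcal{H}-e)$ means $W$ is \emph{not} a cover of $\mathcal{H}-e$, i.e. there is some edge $f \in \mathcal{H}$, $f \neq e$, with $W \cap f = \emptyset$. On the other side, $W \in \nc(\mathcal{H}\neg e)$ means $W$ is not a cover of $\mathcal{H}\neg e$, i.e. there is some $f \in \mathcal{H}$ with $f \nsubseteq e$ such that $W \cap (f \setminus e) = \emptyset$; since $W \subseteq V\setminus e$, the latter condition is the same as $W \cap f = \emptyset$. So I must match up ``$\exists f \in \mathcal{H}$, $f \neq e$, $W \cap f = \emptyset$'' with ``$\exists f \in \mathcal{H}$, $f \nsubseteq e$, $W \cap f = \emptyset$''. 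The only discrepancy between $\{f \in \mathcal{H} : f \neq e\}$ and $\{f \in \mathcal{H} : f \nsubseteq e\}$ concerns edges $f \subseteq e$ with $f \neq e$ — and this is exactly where the \emph{inclusion-minimality} of $e$ enters: since $e$ is inclusion-minimal in $\mathcal{H}$, no such $f$ exists, so the two families of edges coincide, and the two existential statements are literally the same.

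Writing this up: for the forward inclusion, if $W \in \nc(\mathcal{H}-e)\cap\Delta_{V\setminus e}$, pick $f \in \mathcal{H}\setminus\{e\}$ with $f \cap W = \emptyset$; by minimality of $e$ we have $f \nsubseteq e$, so $f \setminus e \in \mathcal{H}\neg e$, and $(f\setminus e)\cap W = f \cap W = \emptyset$ because $W \subseteq V\setminus e$, hence $W$ is a noncover of $\mathcal{H}\neg e$. Conversely, if $W \in \nc(\mathcal{H}\neg e)$, then $W \subseteq V\setminus e = V(\mathcal{H}\neg e)$ so $W \in \Delta_{V\setminus e}$, and there is $f \in \mathcal{H}$ with $f \nsubseteq e$ and $(f \setminus e)\cap W = \emptyset$; then $f \neq e$, and $f \cap W \subseteq (f\setminus e)\cap W = \emptyset$ since $W$ avoids $e$, so $W$ is a noncover of $\mathcal{H}-e$. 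This establishes both inclusions.

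I do not anticipate a genuine obstacle here — the statement is essentially a definitional bookkeeping lemma. The one subtlety, and the only place the hypothesis is used, is the inclusion-minimality of $e$, which is precisely what guarantees that annihilating $e$ does not silently delete any other edge (an edge $f \subsetneq e$ would become the empty set and be discarded), so that $\mathcal{H}\neg e$ and $\mathcal{H}-e$ have ``the same edges'' as far as noncovers contained in $V\setminus e$ are concerned. It is worth remarking that by Observation~\ref{noncover inclusion-minimal} the minimality assumption costs nothing when computing homology, so this lemma combines cleanly with Lemma~\ref{noncov union} to set up the Mayer--Vietoris argument.
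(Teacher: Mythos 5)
Your proof is correct and follows essentially the same route as the paper: a double-inclusion definitional unwinding in which inclusion-minimality of $e$ is invoked exactly where the paper invokes it, namely to guarantee that an edge $f \in \mathcal{H}-e$ disjoint from $W$ satisfies $f \nsubseteq e$, so that $f \setminus e$ is a genuine edge of $\mathcal{H} \neg\, e$. No changes needed.
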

\begin{proof}
It is clear that $\nc(\mathcal{H}\neg~e) \subset \Delta_{V \setminus e}$.
If $\sigma \in \nc(\mathcal{H}\neg~e)$, then there exists an edge $f \nsubseteq e$ in $\mathcal{H}$ such that $f \setminus e \in \mathcal{H}\neg~e$ and $f \setminus e \subset V(\mathcal{H}\neg~e)\setminus \sigma$.
Since $V(\mathcal{H}\neg~e) = V \setminus e$, we have $f \subset V \setminus \sigma$, which implies $\sigma \in \nc(\mathcal{H} - e)$.

For the opposite direction, let $\sigma \in \nc(\mathcal{H}-e) \cap \Delta_{V \setminus e}$.
Since $\sigma \in \nc(\mathcal{H} - e)$, there exists an edge $f \in \mathcal{H} - e$ such that $f \subset V \setminus \sigma$.
It is clear that $f \setminus e \in \mathcal{H}\neg~e$ because $e$ is inclusion-minimal.
Since $\sigma \in \Delta_{V \setminus e}$, we have $f\setminus e \subset V(\mathcal{H}\neg~e) \setminus \sigma$.
It follows that $\sigma \in \nc(\mathcal{H}\neg~e)$.
\end{proof}

Here we assume that a simplex on an empty set is an empty complex.
For each inclusion-minimal edge $e \in \mathcal{H}$, we obtain the following exact sequence by Lemma~\ref{noncov union} and Lemma~\ref{noncov intersection}:
\begin{align}\label{exact noncover}
\begin{split}
&\cdots \to \tilde{H}_{i}(\nc(\mathcal{H} \neg~e)) \to \tilde{H}_i(\nc(\mathcal{H} -e)) \to \tilde{H}_i(\nc(\mathcal{H})) \\
& \to  \tilde{H}_{i-1}(\nc(\mathcal{H} \neg~e)) \to \tilde{H}_{i-1}(\nc(\mathcal{H} -e)) \to \tilde{H}_{i-1}(\nc(\mathcal{H})) \to \cdots.
\end{split}
\end{align}

	\begingroup
	\def\thetheorem{\ref{eta dual}}
	\begin{theorem}
	For every hypergraph $\mathcal{H}$, $\tilde{H}_i({\nc(\mathcal{H})}) = 0$ for all $i \geq |V(\mathcal{H})| - \max\{ \left\lceil\frac{\tilde{\gamma}(\mathcal{H})}{2}\right\rceil, \gamma_{si}(\mathcal{H}), \gamma_E(\mathcal{H})\} - 1$.
	\end{theorem}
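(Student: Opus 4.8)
The plan is to prove the bound by induction on the number of edges $|\mathcal{H}|$, with the Mayer--Vietoris sequence \eqref{exact noncover} as the inductive step and Corollary~\ref{cor:ineq} controlling how the parameter $g(\mathcal{H})$ behaves under edge deletion and edge annihilation. Writing $n=|V(\mathcal{H})|$, the goal is to show $\tilde{H}_i(\nc(\mathcal{H}))=0$ for every $i\geq n-g(\mathcal{H})-1$.

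First I would clear the degenerate cases. If $\mathcal{H}$ has no edge, then $\nc(\mathcal{H})$ is the void complex and has no reduced homology. If $\mathcal{H}$ has an isolated vertex $v$, then $\nc(\mathcal{H})$ is a cone with apex $v$ (adjoining $v$ to any noncover keeps it a noncover), hence contractible, so again all reduced homology vanishes. If some edge of $\mathcal{H}$ properly contains another edge, then by Observation~\ref{noncover inclusion-minimal} deleting it leaves $\nc(\mathcal{H})$ unchanged while strictly decreasing $|\mathcal{H}|$ and not decreasing $g$ (Corollary~\ref{cor:ineq}), so the induction hypothesis applied to the smaller hypergraph finishes this case. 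Thus I may assume $\mathcal{H}$ has no isolated vertices and that every edge of $\mathcal{H}$ is inclusion-minimal. Two extremal configurations then remain and are handled by inspection: if $V\in\mathcal{H}$, minimality forces $\mathcal{H}=\{V\}$, so $\nc(\mathcal{H})=\{\emptyset\}$ and (for $n\geq 2$) $\gamma_{si}(\mathcal{H})=n-1$, so the asserted range is $i\geq 0$ and holds; and if $\binom{V}{1}\subseteq\mathcal{H}$, minimality forces $\mathcal{H}=\binom{V}{1}$, so $\nc(\mathcal{H})$ is the boundary of the simplex on $V$, whose reduced homology lies in degree $n-2<n-1=n-g(\mathcal{H})-1$.

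In the remaining case $\mathcal{H}$ has no isolated vertices, $V\notin\mathcal{H}$, and $\binom{V}{1}\not\subseteq\mathcal{H}$. By Corollary~\ref{cor:ineq} I may choose $e\in\mathcal{H}$ with $|e|\geq 2$ and $g(\mathcal{H}\neg e)\geq g(\mathcal{H})-|e|+1$; since $V\notin\mathcal{H}$ and $e$ is minimal, $e\neq V$, so $V\setminus e\neq\emptyset$, $\Delta_{V\setminus e}$ is contractible, and the exact sequence \eqref{exact noncover} is available for $e$. Both $\mathcal{H}-e$ and $\mathcal{H}\neg e$ have strictly fewer edges than $\mathcal{H}$, so the induction hypothesis applies. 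For $\mathcal{H}-e$ (still on $n$ vertices) it gives $\tilde{H}_i(\nc(\mathcal{H}-e))=0$ for $i\geq n-g(\mathcal{H}-e)-1$, which because $g(\mathcal{H}-e)\geq g(\mathcal{H})$ covers all $i\geq n-g(\mathcal{H})-1$. For $\mathcal{H}\neg e$ (on $n-|e|$ vertices) it gives $\tilde{H}_{i-1}(\nc(\mathcal{H}\neg e))=0$ whenever $i-1\geq (n-|e|)-g(\mathcal{H}\neg e)-1$, i.e. $i\geq n-|e|-g(\mathcal{H}\neg e)$, and the inequality $g(\mathcal{H}\neg e)\geq g(\mathcal{H})-|e|+1$ makes the right-hand side at most $n-g(\mathcal{H})-1$; so this also holds for all $i\geq n-g(\mathcal{H})-1$. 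Feeding both vanishing statements into \eqref{exact noncover} sandwiches $\tilde{H}_i(\nc(\mathcal{H}))$ between zero groups for every $i\geq n-g(\mathcal{H})-1$, completing the induction.

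The arithmetic that matches the three vanishing ranges is routine: the $|e|$ vertices lost in passing to $\mathcal{H}\neg e$ are paid for by the $|e|-1$ drop allowed in $g$ together with the one-step degree shift in \eqref{exact noncover}, so the threshold $n-g-1$ never slides downward. The point needing genuine care is the bookkeeping of the degenerate cases --- specifically, that the edge furnished by Corollary~\ref{cor:ineq} may be taken to have size at least $2$ and to be a proper subset of $V$, so that the $\Delta_{V\setminus e}$ appearing in \eqref{exact noncover} is truly contractible rather than the empty complex $\{\emptyset\}$ (which has nonzero reduced homology in degree $-1$), and that the extremal hypergraphs $\mathcal{H}=\{V\}$ and $\mathcal{H}=\binom{V}{1}$ are checked against the claimed bound.
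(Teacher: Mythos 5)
Your proof is correct and follows essentially the same route as the paper: induction (the paper uses $|V(\mathcal{H})|+|\mathcal{H}|$, you use $|\mathcal{H}|$, which works equally well) combined with Observation~\ref{noncover inclusion-minimal}, Corollary~\ref{cor:ineq}, and the Mayer--Vietoris sequence \eqref{exact noncover}, with the same degenerate cases cleared first. Your only real deviation is treating $\mathcal{H}=\{V\}$ as an explicit base case so that the chosen edge satisfies $e\neq V$ and $\Delta_{V\setminus e}$ is genuinely contractible --- a point the paper handles only implicitly via its convention on the empty simplex --- which is a harmless (indeed slightly more careful) refinement of the same argument.
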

	\addtocounter{theorem}{-1}
	\endgroup		
\begin{proof}
If $\mathcal{H} = \emptyset$, then $\nc(\mathcal{H})$ is a void complex.
If $\mathcal{H} \neq \emptyset$ and $v$ is an isolated vertex in $\mathcal{H}$, then $\nc(\mathcal{H})$ is a cone with apex $v$, thus is contractible.
In both cases, we have $\tilde{H}_i({\nc(\mathcal{H})}) = 0$ for all $i$.
Thus we may assume that $\mathcal{H} \neq \emptyset$ and $\mathcal{H}$ contains no isolated vertices.
Note that if $\mathcal{H} \neq \emptyset$ then $V(\mathcal{H}) \neq \emptyset$.

We may further assume $\binom{V(\mathcal{H})}{1} \nsubseteq \mathcal{H}$.
If $\binom{V(\mathcal{H})}{1} \subset \mathcal{H}$, then $\nc(\mathcal{H})$ is the boundary of the simplex on $V(\mathcal{H})$ which has non-vanishing homology only in dimension $|V(\mathcal{H})|-2$.
Thus the statement is true since every vertex can be dominated by an empty set in our situation.

Now we apply induction on $|V(\mathcal{H})| + |\mathcal{H}|$.
As before, let \[g(\mathcal{H}) = \max\{ \left\lceil\frac{\tilde{\gamma}(\mathcal{H})}{2}\right\rceil, \gamma_{si}(\mathcal{H}), \gamma_E(\mathcal{H})\}.\]
Suppose $\mathcal{H}$ contains an edge $e$ that is not inclusion-minimal.
If $\mathcal{H} - e$ has an isolated vertex $v$, then $\nc(\mathcal{H} - e)$ is contractible.
Otherwise, since $g(\mathcal{H}) \leq g(\mathcal{H} - e) < \infty$ and $|V(\mathcal{H} - e)| = |V(\mathcal{H})|$, we have $|V(\mathcal{H}-e)| - g(\mathcal{H} - e) - 1 \leq V(\mathcal{H}) - g(\mathcal{H}) - 1$.
By the induction hypothesis, it follows that $\tilde{H}_i(\nc(\mathcal{H} - e)) = 0$ for all $i \geq |V(\mathcal{H})| - g(\mathcal{H}) - 1$.
Thus the statement is true since $\nc(\mathcal{H}) = \nc(\mathcal{H}-e)$ by Observation~\ref{noncover inclusion-minimal}.

Now we may assume that every edge in $\mathcal{H}$ is inclusion-minimal.
By Corollary~\ref{cor:ineq}, there exists $e \in \mathcal{H}$ such that $g(\mathcal{H} -e) \geq g(\mathcal{H})$ and $g(\mathcal{H} \neg~e) \geq g(\mathcal{H}) - |e| +1$.
Since $|V(\mathcal{H})| - g(\mathcal{H})-1 \geq |V(\mathcal{H}-e)| - g(\mathcal{H}-e)-1$ and $|V(\mathcal{H})| - g(\mathcal{H}) - 2 \geq |V(\mathcal{H}\neg e)| - g(\mathcal{H} \neg e) - 1$, it follows from the induction hypothesis that 
\begin{itemize}
\item $\tilde{H}_i(\nc(\mathcal{H} - e)) = 0$ for all $i \geq |V(\mathcal{H})| - g(\mathcal{H})-1$, and 

\item $\tilde{H}_i(\nc(\mathcal{H} \neg~e)) = 0$ for all $i \geq |V(\mathcal{H})| - g(\mathcal{H}) - 2$.
\end{itemize}
Then an immediate application of the exact sequence \eqref{exact noncover} gives us $\tilde{H}_i({\nc(\mathcal{H})}) = 0$ for all $i \geq |V(\mathcal{H})| - g(\mathcal{H}) - 1$, as required.
\end{proof}

We conclude this section with two remarks.
\begin{remark}\label{remark:star cluster}
Let $X$ be a simplicial complex on $V \neq \emptyset$.
\begin{itemize}
\item For a non-negative integer $k$, the {\em $k$-skeleton} of $X$ is the subcomplex \[X^{(k)} := \{\sigma \in X: |\sigma| \leq k+1\}.\]

\item For $A \subset V$, the {\em star} of $A$ in $X$ is the subcomplex \[\st(X, A) := \{\sigma \in X: A \cup \sigma \in X\},\]
and the {\em star cluster} of $A$ in $X$ is the subcomplex \[\starcluster(X, A) := \bigcup_{v\in A}\st(X, \{v\}).\]
\end{itemize}
The notion of the star cluster was introduced in \cite{Bar13}.
One can show $\eta(\ii(\mathcal{H})) \geq \gamma_{si}(\mathcal{H})$ using the star cluster.

Let $\mathcal{H}$ be a hypergraph on $V$ and $A \subset V$ a subset.
It is straightforward to see that $\sigma \in \ii(\mathcal{H})$ dominates $A$ if and only if $\sigma \notin \starcluster(\ii(\mathcal{H}), A)$.
Therefore, if $A$ is a strongly independent set of $\mathcal{H}$ such that $s = \gamma_{si}(\mathcal{H}) = \gamma(\mathcal{H} ; A)$, then we have $(\ii(\mathcal{H}))^{(s-2)} \subset \starcluster(\ii(\mathcal{H}),A) \subset \ii(\mathcal{H})$ since $|\sigma| \geq s$ whenever $\sigma \notin \starcluster(\ii(\mathcal{H}),A)$.
It follows that $(\ii(\mathcal{H}))^{(s-2)} = (\starcluster(\ii(\mathcal{H}),A))^{(s-2)}$, thus it is sufficient to show that $\starcluster(\ii(\mathcal{H}), A)$ is contractible.
When $\mathcal{H}$ is a graph, it is implied by \cite[Lemma 3.2]{Bar13}.
In general, it follows from the fact that if $A$ is strongly independent, then for every $A' \subset A$, \[\bigcap_{v \in A'}\st(\ii(\mathcal{H}), \{v\}) =  \st(\ii(\mathcal{H}), A').\]
We leave details to the reader.
\end{remark}
\begin{remark}\label{remark:gammaE}
Interestingly, we can give a lower bound on the connectivity of the noncover complex of a hypergraph in terms of the edge-wise domination number.

Let $\mathcal{H}$ be a hypergraph on $V$ with no isolated vertices.
For each $v \in V$, let $v^\vee = \{F \in \mathcal{H}: v \in F\}$.
Then the hypergraph $\mathcal{H}^\vee$ defined by 
\[V(\mathcal{H}^\vee) = \mathcal{H}\;\;\text{and}\;\;\mathcal{H}^\vee = \{v^\vee: v\in V\}\]
is called the {\em dual hypergraph} of $\mathcal{H}$.
In \cite[Corollary 4.15]{Tsu17}, it was shown that $\nc(\mathcal{H})$ and $\nc(\mathcal{H}^\vee)$ are homotopy equivalent.

Observe that $\mathcal{A} \in \ii(\mathcal{H}^\vee)$ if and only if $\bigcup_{F \in \mathcal{H} - \mathcal{A}} F = V$.
Hence for every $\mathcal{A} \in \ii(\mathcal{H}^\vee)$, we have $\gamma_E(\mathcal{H}) \leq |\mathcal{H}| - |\mathcal{A}|$.
Therefore, $\ii(\mathcal{H}^\vee)$ is $(|\mathcal{H}| - \gamma_E(\mathcal{H}))$-Leray.
It follows from Theorem~\ref{duality theorem} that $\eta(\nc(\mathcal{H})) = \eta(\nc(\mathcal{H}^\vee)) \geq \gamma_E(\mathcal{H}) - 1$.
\end{remark}

\medskip

\section{Tight paths and tight cycles}\label{tight hypergraphs}
The exact sequence \eqref{exact noncover} can be applied when computing the homology of the noncover complexes or independence complexes of hypergraphs.
In this section, we present two examples that are generalizations of cycles and paths.

Let $n$ and $k$ be positive integers and $V$ be a set of size $n$.
A $k$-uniform hypergraph on $V = \{v_1,\ldots,v_n\}$ is called the {\em $k$-uniform tight path}, denoted by $P_{n,k}$, if there exists a linear ordering $<$, say $v_1 < v_2 < \cdots < v_n$, on $V$ such that \[P_{n,k} := \{\{v_{i+1}, \ldots, v_{i+k}\}: 0 \leq i \leq n-k\}.\]
When $n < k$, then there is no edge.

For $n \geq k+1$, the {\em $k$-uniform tight cycle $C_{n,k}$} is the $k$-uniform hypergraph on $\mathbb{Z}_n$ such that 
\[C_{n,k} := \{\{{i}, \ldots, {i+k-1}\}: 0 \leq i \leq n-1\}.\]
For example, $P_{n,2}$ and $C_{n,2}$ are a path and a cycle, respectively.
See Figure~\ref{fig:tight} for an illustration of the $3$-uniform tight path and $3$-uniform tight cycle on $6$ vertices.
\begin{figure}[htbp]
    \centering
    \includegraphics[scale=.8]{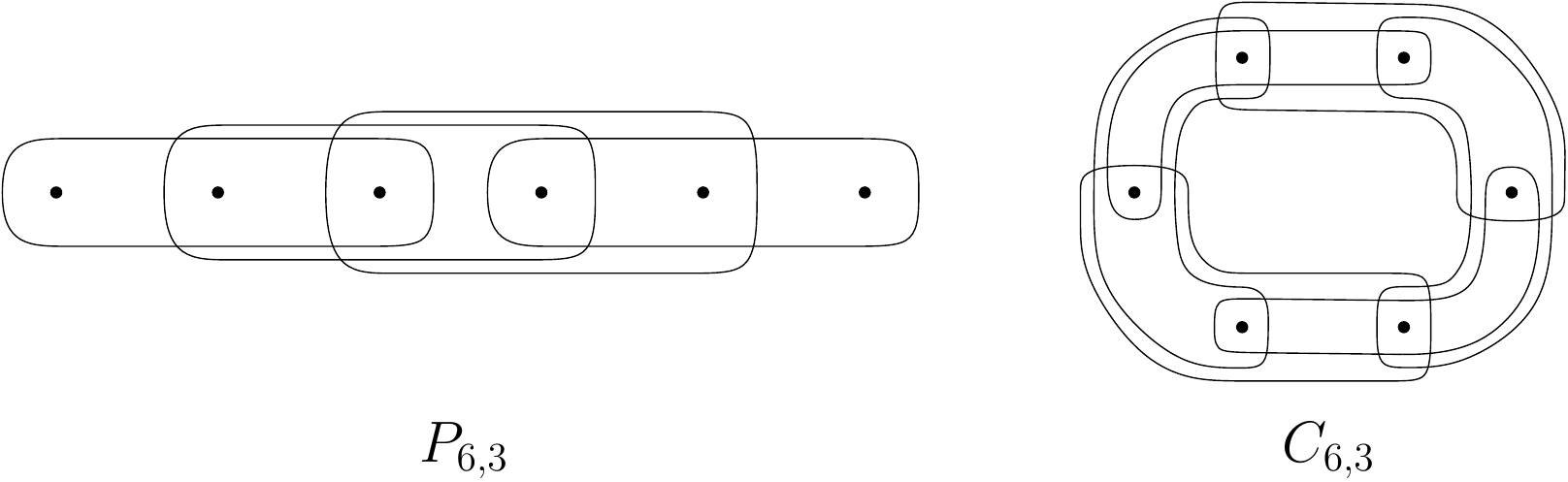}
    \caption{$3$-uniform tight path and tight cycle on $6$ vertices.}
    \label{fig:tight}
\end{figure}

In \cite{Koz99} (see also \cite[Claim 3.3]{Mes03}), it was shown that for every integer $i \geq 0$,
\begin{align}\label{path-cycle}
\begin{split}
&\tilde{\beta}_i(\ii(P_{n,2})) = \begin{cases}
1 & \text{ if } n=3i+2, 3i+3,\\
0 & \text{otherwise}
\end{cases},\text{ and }\\
&\tilde{\beta}_i(\ii(C_{n,2})) = \begin{cases}
2 & \text{if }n=3i+3,\\
1 & \text{if }n=3i+2, 3i+4,\\
0 & \text{otherwise}.
\end{cases}
\end{split}
\end{align}
Applying the sequence \eqref{exact noncover} repeatedly, we can compute the reduced Betti numbers for independence complexes of $P_{n,k}$ and $C_{n,k}$ as a generalization of \eqref{path-cycle}.
For a positive integer $n$, we denote by $[n]$ the set $\{1,\ldots,n\}$.

\begin{theorem}\label{thm:tight}
Let $k, n$ be positive integers and let $q$ be a non-negative integer.
Then
\[\tilde{\beta}_i(\ii(P_{n,k})) = \begin{cases}
1 & \text{ if } i=q(k-1)+k-2, n=q(k+1)+k \\
~&\text{ or }i=q(k-1)+k-2, n=(q+1)(k+1),\\
0 & \text{otherwise}.
\end{cases}.\]
If $n > k$, then 
\[
\tilde{\beta}_i(\ii(C_{n,k}))=
\begin{cases}
k & \text{ if } i=q(k-1)+k-2, n=(q+1)(k+1),\\
1 & \text{ if } i=q(k-1)+k+t-3,\\ & \;\;\;n=(q+1)(k+1)+t \text{ for } t\in[k],\\
0 & \text{otherwise}.
\end{cases}
\]
\end{theorem}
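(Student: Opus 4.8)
The plan is to prove, by repeated use of the Mayer--Vietoris sequence~\eqref{exact noncover} together with Observation~\ref{noncover inclusion-minimal}, reduction formulas
\[
\tilde H_i(\nc(P_{n,k}))\cong\tilde H_{i-2}(\nc(P_{n-k-1,k})),\qquad
\tilde H_i(\nc(C_{n,k}))\cong\tilde H_{i-2}(\nc(C_{n-k-1,k})),
\]
valid once $n$ is large enough, to dispose of a handful of small base cases directly, and finally to translate the Betti numbers of the noncover complexes into those of the independence complexes via the duality Theorem~\ref{duality theorem} (recall $\nc(\mathcal H)=D(\ii(\mathcal H))$, so $\tilde H_j(\nc(\mathcal H))\cong\tilde H_{|V(\mathcal H)|-j-3}(\ii(\mathcal H))$). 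The case $k=1$ is trivial, so assume $k\ge 2$.

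I start with $P_{n,k}$, with vertices $v_1<\dots<v_n$ and edges $e_i=\{v_i,\dots,v_{i+k-1}\}$. The base cases are immediate: $P_{n,k}$ is edgeless for $n<k$; $P_{k,k}$ is a single edge, so $\nc(P_{k,k})$ is an empty complex; and a short Mayer--Vietoris (or folding) argument gives $\ii(P_{k+1,k})\simeq S^{k-2}$, hence $\tilde\beta_0(\nc(P_{k+1,k}))=1$. For $n\ge k+2$ I apply~\eqref{exact noncover} to the edge $e_1$. Since $v_1$ lies only in $e_1$, it is isolated in $P_{n,k}-e_1$, so $\nc(P_{n,k}-e_1)$ is contractible and the sequence collapses to $\tilde H_i(\nc(P_{n,k}))\cong\tilde H_{i-1}(\nc(P_{n,k}\,\neg\,e_1))$. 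Next I identify $P_{n,k}\,\neg\,e_1$: after discarding non-inclusion-minimal edges (Observation~\ref{noncover inclusion-minimal}) it is the disjoint union of the loop $\{v_{k+1}\}$ (arising from $e_2\setminus e_1$) and a copy of $P_{n-k-1,k}$ on $\{v_{k+2},\dots,v_n\}$. Applying~\eqref{exact noncover} once more to this loop (whose removal isolates $v_{k+1}$) gives $\tilde H_i(\nc(P_{n,k}\,\neg\,e_1))\cong\tilde H_{i-1}(\nc(P_{n-k-1,k}))$, and hence the asserted reduction with a shift of $2$. Iterating and writing $n=q(k+1)+r$ with $1\le r\le k+1$, the recursion lands on one of the three base complexes; the cases $r=k$ and $r=k+1$ account for the two families of nonzero Betti numbers, and a direct translation through the duality theorem yields the stated formula for $\tilde\beta_i(\ii(P_{n,k}))$.

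For $C_{n,k}$ I use the vertex-deletion decomposition $\ii(C_{n,k})=\st(\ii(C_{n,k}),\{v\})\cup\ii(C_{n,k}-v)$: the star is a contractible cone with apex $v$, the complex $\ii(C_{n,k}-v)=\ii(P_{n-1,k})$ is already known, and the intersection is the link $\operatorname{lk}_{\ii(C_{n,k})}(v)=\ii(\mathcal H_v)$, where $\mathcal H_v$ consists of the edges of $C_{n,k}$ missing $v$ together with the ``broken'' sets $e\setminus\{v\}$ for $e\ni v$. (Dually, this is the effect on $\nc$ of peeling off, one at a time via~\eqref{exact noncover}, the $k$ edges through $v$, after which $v$ becomes isolated.) The Mayer--Vietoris sequence then reduces computing $\ii(C_{n,k})$ to computing the link complex, which I treat by the same edge-annihilation recursion: after removing redundant edges, $\mathcal H_v$ is a tight path carrying a ``folded'' block of shorter edges where $v$ used to be, and peeling it down terminates at small base cases that are skeleta of simplices. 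The key point for the cycle is that $\ii(C_{k+1,k})$ is the $(k-2)$-skeleton of a $k$-simplex, a wedge of $k$ copies of $S^{k-2}$; this is the source of the coefficient $k$ that appears when $n\equiv 0\pmod{k+1}$. Feeding the homology of the link and of $\ii(P_{n-1,k})$ into the Mayer--Vietoris sequence and tracking the degree shifts produces the formula for $\tilde\beta_i(\ii(C_{n,k}))$, and one reads off the analogous reduction $\tilde H_i(\nc(C_{n,k}))\cong\tilde H_{i-2}(\nc(C_{n-k-1,k}))$ for $n\ge 2k+2$.

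The delicate part is the cycle computation, specifically the link $\operatorname{lk}_{\ii(C_{n,k})}(v)$. Unlike in the path case no single edge of $C_{n,k}$ can be removed so as to isolate a vertex, so several edges must be peeled and the intermediate hypergraphs are genuinely non-uniform (mixing $k$-edges with shorter edges coming from the wraparound); carrying out this recursion cleanly, pinning down all the small base cases, and verifying that the connecting maps in the relevant Mayer--Vietoris sequences vanish in the degrees that matter—so that the Betti numbers add rather than cancel—is where the real work lies. By contrast, the path case and all the duality bookkeeping are routine once the reduction $\tilde H_i(\nc(P_{n,k}))\cong\tilde H_{i-2}(\nc(P_{n-k-1,k}))$ is in hand.
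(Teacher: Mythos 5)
Your treatment of the tight path is correct and is essentially the paper's own argument: annihilating the end edge and then the resulting loop is exactly the recursion $\nc(P_{n,k})\simeq \Sigma\,\nc(P^{(1)}_{n-k,k})\simeq \Sigma^2\nc(P_{n-k-1,k})$ behind Theorem~\ref{path-homotopy}, followed by Theorem~\ref{duality theorem}; the base cases and the degree bookkeeping check out (only note that in the step where the residual path $P_{n-k-1,k}$ is edgeless, $\nc(\mathcal{H}-e)$ is a void complex rather than a cone, but both sides of your isomorphism vanish there, or one computes the complex directly, so this is harmless).

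The cycle half, however, has a genuine gap. Your plan reduces $\tilde\beta_*(\ii(C_{n,k}))$ to the homology of the link of a vertex $v$, i.e.\ of $\ii(\mathcal{H}_v)$ where $\mathcal{H}_v$ consists of the edges avoiding $v$ together with the truncations $e\setminus\{v\}$, but you never compute this link, and it is not amenable to ``the same recursion'' as the path: after discarding non-inclusion-minimal edges, $\mathcal{H}_v$ still contains the $k-2$ truncated $(k-1)$-edges that straddle the position of $v$ and join the two ends of the remaining path, and no vertex of $\mathcal{H}_v$ lies in a unique edge, so the ``delete one edge to isolate a vertex, hence suspend'' step that drives your path argument does not apply. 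Pinning down $\tilde H_*(\ii(\mathcal{H}_v))$ requires its own induction over auxiliary mixed-size hypergraphs, which is precisely the content of the paper's families $P^{(j)}_{n,k}$ and $C^{(j)}_{n,k}$ in Theorems~\ref{path-homotopy} and~\ref{cycle-homotopy} (your parenthetical dual route, peeling the $k$ edges through $v$ one at a time, is literally the paper's induction and needs $\nc(P^{(l)}_{n-k-1,k})$ for all $1\le l\le k-1$, whereas your path section only establishes the cases $l=k$ and, implicitly, $l=1$). Moreover, even granting the link homology, the case $n=(q+1)(k+1)$ — the source of the coefficient $k$ — requires the Mayer--Vietoris connecting maps to vanish so that Betti numbers add; at the homology level you cannot invoke cellular approximation (that is a statement about maps of spaces, which the paper can use because it works with homotopy types of quotients and wedges via Lemma~\ref{mv homotopy}), so you would need either a degree-separation argument (again presupposing the link computation) or an explicit analysis of the maps. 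Your correct observation that $\ii(C_{k+1,k})$ is the $(k-2)$-skeleton of a $k$-simplex, a wedge of $k$ copies of $S^{k-2}$, identifies the base case but does not substitute for this induction. Since you explicitly defer both points as ``where the real work lies,'' the cycle formula in Theorem~\ref{thm:tight} is not actually proved by the proposal.
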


Indeed, the results in \cite{Koz99} is stronger than \eqref{path-cycle}: the author computed the homotopy types of $\nc(P_{n,2})$ and $\nc(C_{n,2})$, respectively.
In this section, we prove hypergraph analogues of Kozlov's results.
Before we go into details, here are some topological backgrounds.

Let $X$ and $Y$ be topological spaces.
\begin{itemize}
\item We write $X \simeq Y$ if $X$ and $Y$ are homotopy equivalent. In particular, if $X$ is contractible, we write $X \simeq *$.

\item The {\em wedge sum} of $X$ and $Y$ is the space $X \vee Y$ obtained by taking the disjoint union of $X$ and $Y$ and identifying a point of $X$ and a point of $Y$.

\item Given an equivalence relation $\sim$ on $X$, we denote by $X / \sim$ the quotient space of $X$ under $\sim$.
When $Y \subset X$, we denote by $X / Y$ the quotient space $X / \sim$ where for all $x \neq y$ in $X$, $x \sim y$ if and only if $x, y \in Y$.

\item The {\it suspension} of $X$ is the quotient space $\Sigma X = X \times [0,1] / \sim$ where for all $(x, s) \neq (y, t)$ in $X \times [0,1]$, $(x, s) \sim (y, t)$ if and only if either $s=t = 0$ or $s=t = 1$.
Note that if $S^n$ is the $n$-dimensional sphere, then $\Sigma S^n \simeq S^{n+1}$.
\end{itemize}

Let $K$, $K_1$ and $K_2$ be simplicial complexes where $K_1 \cap K_2$ is not a void complex, and let $L$ be a subcomplex of $K$ that is not a void complex.
Then,
\begin{enumerate}[(A)]
\item\label{aaaaaa} If $K=K_1 \cup K_2$, then $K/K_2$ is homeomorphic to $K_1/(K_1 \cap K_2)$.
\item\label{bbbbbb} If the inclusion map $L \hookrightarrow K$ is homotopic to a constant map $c:L \to K$, i.e. if $L$ is contractible in $K$, then $K/L \simeq K \vee \Sigma L$. 
In particular, if $K \simeq *$, then $K/L \simeq \Sigma L$. If $L \simeq *$, then $K/L \simeq K$.
\end{enumerate}
Applying \eqref{aaaaaa} to Lemma~\ref{noncov union} and Lemma~\ref{noncov intersection}, we obtain the following.
\begin{lemma}\label{mv homotopy}
Let $\mathcal{H}$ be a hypergraph having at least two edges. Then for every inclusion-minimal edge $e$, $\nc(\mathcal{H}) \simeq \nc(\mathcal{H}-e) /\nc(\mathcal{H} \neg ~e)$.
\end{lemma}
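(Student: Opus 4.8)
The plan is to feed the decomposition supplied by Lemmas~\ref{noncov union} and~\ref{noncov intersection} into the two topological facts \eqref{aaaaaa} and \eqref{bbbbbb} recorded just above. Fix an inclusion-minimal edge $e$ of $\mathcal{H}$, and set $K_1 := \nc(\mathcal{H}-e)$ and $K_2 := \Delta_{V\setminus e}$. By Lemma~\ref{noncov union} we have $\nc(\mathcal{H}) = K_1\cup K_2$, and by Lemma~\ref{noncov intersection} (which is where the inclusion-minimality of $e$ enters) we have $K_1\cap K_2 = \nc(\mathcal{H}\neg e)$.

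Before invoking \eqref{aaaaaa} and \eqref{bbbbbb}, I would check that all the complexes involved are non-void; this is exactly what the hypothesis that $\mathcal{H}$ has at least two edges is for. Since $\mathcal{H}-e\neq\emptyset$, the empty set is a noncover of $\mathcal{H}-e$, so $K_1$ is non-void. Next, pick an edge $f\neq e$ of $\mathcal{H}$: inclusion-minimality of $e$ forces $f\not\subseteq e$, so $\emptyset\neq f\setminus e\in\mathcal{H}\neg e$, whence $\mathcal{H}\neg e\neq\emptyset$ and $K_1\cap K_2 = \nc(\mathcal{H}\neg e)$ is non-void too. The same observation shows $e\neq V$ (an edge $f\neq e$ with $f\subseteq e=V$ would contradict minimality), so $V\setminus e\neq\emptyset$ and $K_2=\Delta_{V\setminus e}$ is a genuine simplex, in particular contractible and non-void.

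With these checks in hand, the argument is short. Applying \eqref{aaaaaa} to $\nc(\mathcal{H})=K_1\cup K_2$ shows that $\nc(\mathcal{H})/K_2$ is homeomorphic to $K_1/(K_1\cap K_2)=\nc(\mathcal{H}-e)/\nc(\mathcal{H}\neg e)$. On the other hand, $K_2$ is contractible, so the last clause of \eqref{bbbbbb} (collapsing a contractible subcomplex) gives $\nc(\mathcal{H})/K_2\simeq\nc(\mathcal{H})$. Chaining the homeomorphism with this homotopy equivalence yields $\nc(\mathcal{H})\simeq\nc(\mathcal{H}-e)/\nc(\mathcal{H}\neg e)$, as required.

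I do not expect a genuine obstacle here: the only delicate point is bookkeeping about void complexes, since the quotient notation $X/Y$ and both facts \eqref{aaaaaa} and \eqref{bbbbbb} are stated only for non-void complexes. That is precisely why the two-edge hypothesis is imposed, and the middle paragraph discharges it; alternatively one could treat the degenerate cases $\mathcal{H}=\{e\}$ and $e=V$ directly, but excluding them via the hypothesis is cleaner.
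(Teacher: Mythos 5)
Your proposal is correct and follows essentially the same route as the paper: decompose $\nc(\mathcal{H})=\nc(\mathcal{H}-e)\cup\Delta_{V\setminus e}$ via Lemmas~\ref{noncov union} and~\ref{noncov intersection}, apply \eqref{aaaaaa}, and collapse the contractible simplex $\Delta_{V\setminus e}$ using \eqref{bbbbbb}. The paper leaves the non-voidness bookkeeping implicit, so your middle paragraph is just a more careful spelling-out of the same argument.
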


\subsection{Tight paths}\label{sec:tight paths}
We will compute the homotopy types of the noncover complexes of hypergraphs that are more general than tight paths.
Let $j, k$ and $n$ be positive integers such that $j \leq k$.
We define $P_{n,k}^{(j)}$ as the hypergraph on $V = \{v_1,\ldots,v_n\}$ obtained from $P_{n,k} = \{\{v_{i+1}, \ldots, v_{i+k}\}: 0 \leq i \leq n-k\}$ by deleting the edge $\{v_{n-k+1},\ldots,v_n\}$ if it exists, and then adding the edge $\{v_{n-j+1},\ldots,v_{n}\}$ if $n \geq j$.
Note that $P_{n,k}^{(k)}=P_{n,k}$.
\begin{theorem}\label{path-homotopy}
Let $j, k$ and $n$ be positive integers such that $1 \leq j \leq k$.
If $n <j$, $\nc(P_{n,k}^{(j)})=\emptyset$.
If $n \geq j$, then for $q \geq 0$,
\[\nc(P_{n,k}^{(j)}) \simeq \begin{cases}
S^{2q} & \text{ if } n=(q+1)(k+1) \\
S^{2q-1} &\text{ if } n=q(k+1)+j,\\
$*$ & \text{otherwise}.
\end{cases}\]
\end{theorem}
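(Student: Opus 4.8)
The plan is to prove Theorem~\ref{path-homotopy} by induction on $n$, using the recursive structure provided by Lemma~\ref{mv homotopy}. First I would dispose of the base cases: when $n < j$ there are no edges and $\nc(P_{n,k}^{(j)}) = \emptyset$ as claimed; when $j \le n \le k$ the only edge is $\{v_{n-j+1},\ldots,v_n\}$, so $\nc$ is a single simplex, hence contractible, and one checks this matches the stated answer (the equalities $n=(q+1)(k+1)$ and $n=q(k+1)+j$ with $q\ge 0$ force $q=0$ in the second case, giving $S^{-1}=\emptyset$ only when $n=j$, which is already excluded). So the interesting range is $n > k$, where there are at least two edges and every edge is inclusion-minimal (since the hypergraph is ``almost uniform'' — all edges have size $k$ except possibly the last one of size $j \le k$, and no size-$k$ edge contains it unless $j<k$ and the last edge sits at the tail, which one checks does not cause containment because the size-$k$ edges end at $v_{i+k}$ for $i \le n-k-1$).

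The key step is to choose the right inclusion-minimal edge $e$ to annihilate so that both $P_{n,k}^{(j)} - e$ and $P_{n,k}^{(j)} \neg\, e$ are again hypergraphs of the form $P_{m,k}^{(j')}$ (up to removing isolated vertices, which only changes $\nc$ by a contractible factor or not at all). The natural choice is the ``first'' edge $e = \{v_1,\ldots,v_k\}$. Then $P_{n,k}^{(j)} - e$ has $v_1$ as an isolated vertex, so $\nc(P_{n,k}^{(j)}-e)$ is contractible; by item~\eqref{bbbbbb} (the $K \simeq *$ case), $\nc(P_{n,k}^{(j)}) \simeq \Sigma\, \nc(P_{n,k}^{(j)} \neg\, e)$. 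Next I would identify $P_{n,k}^{(j)} \neg\, e$: annihilating $e=\{v_1,\ldots,v_k\}$ removes $v_1,\ldots,v_k$ from the vertex set and from every edge; the edge $\{v_2,\ldots,v_{k+1}\}$ becomes $\{v_{k+1}\}$, a loop. A loop at $v_{k+1}$ means $v_{k+1}$ lies in no independent set, so effectively $v_{k+1}$ can be deleted and all edges containing it removed — concretely, $\nc$ of a hypergraph with a loop at $u$ equals $\nc$ of the hypergraph obtained by deleting $u$ and every edge through $u$ (the loop $\{u\}$ annihilates to kill those edges). After this cleanup one is left, I expect, with $P_{n-(k+1),k}^{(j)}$ on vertices $v_{k+2},\ldots,v_n$ (the edges $\{v_i,\ldots,v_{i+k-1}\}$ for $i \ge k+2$ survive, the last edge of size $j$ survives, and everything touching $v_1,\ldots,v_{k+1}$ is gone). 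So the recursion is $\nc(P_{n,k}^{(j)}) \simeq \Sigma\, \nc(P_{n-(k+1),k}^{(j)})$, and since $\Sigma S^m \simeq S^{m+1}$ and $\Sigma(*) \simeq *$ and $\Sigma \emptyset = S^0$, the claimed formula follows by matching parities: decreasing $n$ by $k+1$ decreases $q$ by $1$ in both families $n=(q+1)(k+1)$ and $n=q(k+1)+j$, and the dimension drops by $2$ accordingly; when the recursion bottoms out in the range $j \le m \le k+1$ one reads off $S^{-1}=\emptyset$, $S^0$, or $*$ as appropriate.

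The main obstacle I anticipate is the careful bookkeeping of the edge-annihilation step: verifying precisely which edges survive in $P_{n,k}^{(j)} \neg\, \{v_1,\ldots,v_k\}$, handling the loop at $v_{k+1}$ correctly, and confirming that after deleting $v_{k+1}$ together with its incident edges the remaining hypergraph really is isomorphic to $P_{n-(k+1),k}^{(j)}$ rather than some slightly different truncation — in particular one must check the fate of the special tail edge $\{v_{n-j+1},\ldots,v_n\}$ in the sub-case where $n-(k+1) < j$ or $n-(k+1)$ is small, so that the induction meshes with the base cases. A secondary subtlety is making sure the inclusion $\nc(P_{n,k}^{(j)}\neg\, e) \hookrightarrow \nc(P_{n,k}^{(j)}-e)$ lands in a contractible complex so that item~\eqref{bbbbbb} applies in the clean form $K/L \simeq \Sigma L$; this is immediate here because $\nc(P_{n,k}^{(j)}-e)$ is a cone with apex $v_1$, hence every subcomplex of it is contractible in it, but it is worth stating explicitly. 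Once these identifications are nailed down, the parity/dimension arithmetic is routine, and an entirely parallel argument — annihilating one edge of a tight cycle to break it into a tight path of the form $P_{n-k,k}^{(?)}$ or similar — will presumably handle the cycle case and recover Theorem~\ref{thm:tight}.
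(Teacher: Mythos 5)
Your reduction step contains a genuine error: the claim that ``$\nc$ of a hypergraph with a loop at $u$ equals $\nc$ of the hypergraph obtained by deleting $u$ and every edge through $u$'' is false. A loop $\{u\}$ does not remove $u$ from the noncover complex; it forces the full simplex $\Delta_{V\setminus\{u\}}$ to be contained in $\nc$, and the correct way to eliminate the loop is another application of Lemma~\ref{mv homotopy}: after discarding edges that strictly contain the loop (Observation~\ref{noncover inclusion-minimal}), deleting the edge $\{u\}$ isolates $u$, so $\nc(\mathcal{H})\simeq \Sigma\,\nc(\mathcal{H}\neg\,\{u\})$ --- an \emph{extra suspension} appears, exactly as in the paper's treatment of the loop in the tight-cycle proof. (Tiny check: for $\mathcal{H}=\{\{u\},\{w\}\}$ on $\{u,w\}$ one has $\nc(\mathcal{H})\simeq S^0$, while your deletion rule would give $\{\emptyset\}=S^{-1}$.) Consequently your recursion $\nc(P_{n,k}^{(j)})\simeq \Sigma\,\nc(P_{n-(k+1),k}^{(j)})$ is off by one suspension; the true statement is $\nc(P_{n,k}^{(j)})\simeq \Sigma^2\,\nc(P_{n-(k+1),k}^{(j)})$, which is what the theorem's arithmetic requires (your own remark that ``the dimension drops by $2$'' per step of $k+1$ is inconsistent with a single suspension). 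Concretely, for $k=j=2$ and $n=6$ your recursion would give $\nc(P_{6,2})\simeq \Sigma\,\nc(P_{3,2})\simeq \Sigma S^0=S^1$, whereas the correct answer is $S^2$.

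Your choice of edge is otherwise workable and genuinely different from the paper's: you annihilate the first size-$k$ edge $\{v_1,\dots,v_k\}$, which creates a loop at $v_{k+1}$ and, once the loop is handled correctly (second suspension), yields the step-$(k+1)$ double-suspension recursion in one pass; the paper instead annihilates the last, size-$j$ edge, obtaining the one-step recursion $\nc(P_{n,k}^{(j)})\simeq \Sigma\,\nc(P_{n-j,k}^{(k-j+1)})$ within the two-parameter family (this is precisely why the family $P_{n,k}^{(j)}$ was introduced), and iterates it twice to get $\Sigma^2\,\nc(P_{n-k-1,k}^{(j)})$. Two smaller points to tidy: in the range $j\le n\le k$ the complex is \emph{not} always contractible --- at $n=j$ it is the empty complex $\{\emptyset\}=S^{-1}$, which is exactly the $q=0$ case of the second line of the formula, so it must not be ``excluded''; and the case $n=k+1$ (where $\nc$ is $S^0$) should either be verified directly, as in the paper, or obtained from your recursion with the corrected loop step, for which you must also check that $\nc(P_{n,k}^{(j)}\neg\,e)$ is not void so that item~\eqref{bbbbbb} applies.
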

\begin{proof}
For convenience, let $V = [n]$.

Let us first compute the homotopy type of $\nc(P_{n,k}^{(j)})$ for $1 \leq n \leq k+1$.
If $n<j$, then  $P_{n,k}^{(j)}$ has no edge and $\nc(P_{n,k}^{(j)})$ is a void complex.
If $j < n < k+1$, then $P_{n,k}^{(j)}$ has at least one isolated vertex since the only edge in $P_{n,k}^{(j)}$ has size $j < n$, and hence $\nc(P_{n,k}^{(j)})$ is contractible.
If $n=j$, then the only edge in $P_{j,k}^{(j)}$ is the whole vertex set and $\nc(P_{j,k}^{(j)})$ is an empty complex.
If $n=k+1$, then $\nc(P_{k+1,k}^{(j)})$ consists of the disjoint union of the simplex on $[k-j+1]$ and the vertex $k+1$.
Combining the above arguments, we obtain the following: for $1 \leq n \leq k+1$,
\begin{align}\label{small path}
\nc(P_{n,k}^{(j)}) \simeq \begin{cases}
\emptyset & \text{ if } n<j,\\
\{\emptyset\} = S^{-1} & \text{ if } n=j,\\
* & \text{ if } j < n \leq k,\\
S^{0} & \text{ if } n=k+1.\end{cases}
\end{align}

Now assume $n > k+1$.
Let $e$ be the the unique edge containing the vertex $n$.
Since $e$ is inclusion-minimal and $P_{n,k}^{(j)} -e \neq \emptyset$, Lemma~\ref{mv homotopy} implies 
\[\nc(P_{n,k}^{(j)}) \simeq \nc(P_{n,k}^{(j)}-e) / \nc(P_{n,k}^{(j)} \neg~e).\]
Since $n$ is an isolated vertex in $P_{n,k}^{(j)}-e$, $\nc(P_{n,k}^{(j)})$ is a cone with apex $n$, which is contractible.
Thus $\nc(P_{n,k}^{(j)}) \simeq \Sigma \nc(P_{n,k}^{(j)} \neg~e)$ by \eqref{bbbbbb}.
Note that the hypergraph $P_{n,k}^{(j)}\neg~e$ is the union of $P_{n-j,k}$ and edges \[\{\{t,t+1,\dots,n-j\}: n-k-j+2 \leq t \leq n-k\}.\]

By Observation~\ref{noncover inclusion-minimal}, we have $\nc(P_{n,k}^{(j)}\neg~e)=\nc(P_{n-j,k}^{(k-j+1)})$ since $P_{n-j,k}^{(k-j+1)}$ can be obtained from $P_{n,k}^{(j)}\neg~e$ by deleting all the edges that are not inclusion-minimal.
Thus we have 
\begin{align}\label{path-induction1}
\nc(P_{n,k}^{(j)}) \simeq \Sigma \nc(P_{n-j,k}^{(k-j+1)}).
\end{align}
If $n >k+j+1$, then by applying \eqref{path-induction1} to $\nc(P_{n-j,k}^{(k-j+1)})$, we have
\begin{align}\label{path-induction2}
\nc(P_{n,k}^{(j)}) \simeq \Sigma \nc(P_{n-j,k}^{(k-j+1)}) \simeq \Sigma^2 \nc(P_{n-k-1,k}^{(j)}).
\end{align}
Combining \eqref{small path}, \eqref{path-induction1} and \eqref{path-induction2}, we obtain the desired result.
\end{proof}

\subsection{Tight cycles}\label{sec:tight cycles}
Similarly, we will compute the homotopy types of the noncover complexes of hypergraphs that are more general than tight cycles.
Let $k$ and $n$ be positive integers such that $n > k$, and let $j$ be a non-negative integer.
Define $C_{n,k}^{(j)}$ as the hypergraph obtained from $C_{n,k}$ by deleting $j$ edges $\{\{s,\dots,s+k-1\}: 0 \leq s \leq j-1\}$.
Note that $C_{n,k}^{(0)}=C_{n,k}$ and $C_{n,k}^{(k-1)}=P_{n,k}$.

\begin{theorem}\label{cycle-homotopy}
Let $k$ and $n$ be positive integers with $n > k$ and $j \leq n$ a non-negative integer.
If $j = n$, then $\nc(C_{n,k}^{(j)})$ is a void complex.
If $k \leq j < n$, then $\nc(C_{n,k}^{(j)})$ is contractible.
If $0 \leq j \leq k-1$, then for $q \geq 0$,
\[
\nc(C_{n,k}^{(j)})=
\begin{cases}
\displaystyle\bigvee_{k-j} S^{2q} & \text{ if } n=(q+1)(k+1),\\
S^{2q+1} & \text{ if } n=(q+1)(k+1)+t, j <t \leq k,\\
* & \text{otherwise}.
\end{cases}
\]
\end{theorem}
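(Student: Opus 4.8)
The plan is to proceed by downward induction on $j$ with $n$ and $k$ fixed, reducing the cycle case to the tight-path case of Theorem~\ref{path-homotopy} by peeling off one edge at a time, exactly as in the proof of that theorem. Index the edges of $C_{n,k}$ as $e_i=\{i,i+1,\dots,i+k-1\}$, indices modulo $n$. Two cases are immediate. If $j=n$ then $C_{n,k}^{(j)}$ has no edge and $\nc(C_{n,k}^{(j)})$ is void. If $k\le j<n$, then the vertex $k-1$ lies only in $e_0,\dots,e_{k-1}$, all of which are deleted, so $k-1$ is isolated in $C_{n,k}^{(j)}$ and $\nc(C_{n,k}^{(j)})$ is a cone, hence contractible. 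For $0\le j\le k-1$ the base case $j=k-1$ is Theorem~\ref{path-homotopy}: here $C_{n,k}^{(k-1)}=P_{n,k}=P_{n,k}^{(k)}$, and its homotopy type matches the claim.

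Now let $0\le j\le k-2$. Since $C_{n,k}^{(j)}$ is $k$-uniform with at least two edges, $e_j$ is inclusion-minimal and Lemma~\ref{mv homotopy} gives
\[\nc(C_{n,k}^{(j)})\ \simeq\ \nc\big(C_{n,k}^{(j)}-e_j\big)\big/\nc\big(C_{n,k}^{(j)}\neg e_j\big)\ =\ \nc\big(C_{n,k}^{(j+1)}\big)\big/\nc\big(C_{n,k}^{(j)}\neg e_j\big),\]
and by Lemma~\ref{noncov intersection} the complex $L:=\nc(C_{n,k}^{(j)}\neg e_j)$ is a subcomplex of $K:=\nc(C_{n,k}^{(j+1)})$. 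The heart of the proof is to identify $L$. Write $V\setminus e_j=\{w_1<\dots<w_{n-k}\}$ in the cyclic order that begins just after $e_j$. Inspecting the sets $e_i\setminus e_j$ for $i\ne j$ shows that $C_{n,k}^{(j)}\neg e_j$ consists of a loop $\{w_1\}$, the prefixes $\{w_1,\dots,w_s\}$, the length-$k$ windows $\{w_s,\dots,w_{s+k-1}\}$ of a tight path on $w_1,\dots,w_{n-k}$, and the suffixes of $V\setminus e_j$ of sizes $k-1,\dots,j+1$, the windows and suffixes arising from the edges of $C_{n,k}$ that straddle $e_j$ on one side or the other. By Observation~\ref{noncover inclusion-minimal} we may discard the edges that are not inclusion-minimal, leaving the loop $\{w_1\}$, the interior windows $\{w_s,\dots,w_{s+k-1}\}$ with $2\le s\le n-2k$, and the suffix of size $j+1$. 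Since $w_1$ now lies in no edge other than its loop, peeling that loop by Lemma~\ref{mv homotopy} together with \eqref{bbbbbb} --- just as for tight paths --- gives $L\simeq\Sigma\nc(\mathcal{G}')$ where $\mathcal{G}'$ is obtained by deleting $w_1$; and a relabelling identifies $\mathcal{G}'$ with $P_{n-k-1,k}^{(j+1)}$. Hence $L\simeq\Sigma\nc\big(P_{n-k-1,k}^{(j+1)}\big)$, whose homotopy type is given by Theorem~\ref{path-homotopy}.

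Finally I would assemble $K/L$. If $K$ or $L$ is contractible, then \eqref{bbbbbb} gives $K/L\simeq K$ or $K/L\simeq\Sigma L$. Comparing the formula of Theorem~\ref{path-homotopy} for $L$ with the inductive hypothesis for $K$, the only remaining case is $n=(q+1)(k+1)$ with $q\ge1$, where $L\simeq S^{2q-1}$ while $K\simeq\bigvee_{k-j-1}S^{2q}$ is $(2q-1)$-connected; then the inclusion $L\hookrightarrow K$ represents a class in $\pi_{2q-1}(K)=0$ and so is null-homotopic, whence $K/L\simeq K\vee\Sigma L$ by \eqref{bbbbbb} once more. In each case the right-hand side simplifies, via Theorem~\ref{path-homotopy}, to the asserted wedge of spheres. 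The finitely many small values of $n$ for which the description of $C_{n,k}^{(j)}\neg e_j$ above degenerates (for instance when the straddling edges wrap too far around $C_{n,k}$, or when $\mathcal{G}'$ already has its whole vertex set as an edge) are checked by hand.

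The principal obstacle is exactly the combinatorial bookkeeping in the penultimate paragraph: correctly enumerating the sets $e_i\setminus e_j$, accounting for the cyclic wrap-around of the straddling edges, determining which of the resulting edges survive as inclusion-minimal, and pinning down which small $n$ need separate treatment. Once that identification is in hand, everything else is a formal manipulation with Lemma~\ref{mv homotopy}, properties~\eqref{aaaaaa}--\eqref{bbbbbb}, and Theorem~\ref{path-homotopy}.
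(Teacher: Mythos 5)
Your proposal is correct and follows essentially the same route as the paper's own proof: downward induction on $j$, peeling the edge $e_j$ via Lemma~\ref{mv homotopy}, identifying the inclusion-minimal core of $C_{n,k}^{(j)}\neg~e_j$ as a loop together with a copy of $P_{n-k-1,k}^{(j+1)}$ so that its noncover complex becomes $\Sigma\nc(P_{n-k-1,k}^{(j+1)})$, and then assembling the quotient using \eqref{bbbbbb} together with the null-homotopy of the inclusion $S^{2q-1}\hookrightarrow\bigvee S^{2q}$ in the case $n=(q+1)(k+1)$. The only difference is cosmetic: the paper treats the base case $n=k+1$ explicitly, whereas you defer the degenerate small values of $n$ to a hand check.
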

\begin{proof}
It is clear that $\nc(C_{n,k}^{(n)})$ is the void complex since $C_{n,k}^{(n)} = \emptyset$.
If $k \leq j < n$, then the vertex $k-1$ becomes an isolated vertex and $C_{n,k}^{(j)} \neq \emptyset$, thus $\nc(C_{n,k}^{(j)})$ is contractible.

When $j = k-1$, since $C_{n,k}^{(k-1)}=P_{n,k}$, the statement is true by Theorem~\ref{path-homotopy}.
When $n = k+1$, it is obvious because $\nc(C_{n,k}^{(j)})$ is the disjoint union of $k + 1 - j$ vertices. 
Now assume $n > k+1$ and suppose that the statement holds for some $l \leq k-1$:
\[
\nc(C_{n,k}^{(l)})=
\begin{cases}
\displaystyle\bigvee_{k-l} S^{2q} & \text{ if } n=(q+1)(k+1),\\
S^{2q+1} & \text{ if } n=(q+1)(k+1)+t, l <t \leq k,\\
* & \text{otherwise}.
\end{cases}
\]

Let $e = \{l-1,\dots,k+l-2\} \in C_{n,k}^{(l-1)}$.
Since $C_{n,k}^{(l-1)}-e=C_{n,k}^{(l)}$, Lemma~\ref{mv homotopy} give us 
\[\nc(C_{n,k}^{(l-1)}) \simeq \nc(C_{n,k}^{(l)})/\nc(C_{n,k}^{(l-1)} \neg~e).\]
Thus, to determine the homotopy type of $\nc(C_{n,k}^{(l-1)})$, it is enough to compute the homotopy type of $\nc(C_{n,k}^{(l-1)} \neg~e)$.

Let $\mathcal{H}$ be the disjoint union of a loop $\{k+l-1\}$ and $P_{n-k-1,k}^{(l)}$ defined on $\{0,\ldots,l-2,k+l,\ldots,n-1\}$ with a linear ordering \[k+l \prec k+l+1 \prec \cdots \prec n-1 \prec 0 \prec 1 \prec \cdots \prec l-2.\]
Note that $\mathcal{H}$ can be obtained from $C_{n,k}^{(l-1)}\neg~e$ by deleting all the edges that are not inclusion-minimal, and hence $\nc(C_{n,k}^{(l-1)}\neg~e) = \nc(\mathcal{H})$ by Observation~\ref{noncover inclusion-minimal}.
Since $\mathcal{H} - \{k+l-1\}$ has an isolated vertex $k+l-1$, its noncover complex is contractible.
Observing that $\mathcal{H} \neg~\{k+l-1\} = P_{n-k-1,k}^{(l)}$, Lemma~\ref{mv homotopy} and \eqref{bbbbbb} implies 
\[\nc(\mathcal{H}) \simeq \nc(\mathcal{H}-\{k+l-1\})/\nc(\mathcal{H} \neg \{k+l-1\}) \simeq \Sigma \nc(P_{n-k-1,k}^{(l)}).\]
Thus we have
\[
\nc(C_{n,k}^{(l-1)}) \simeq \nc(C_{n,k}^{(l)})/ \nc(\mathcal{H}) \text{ and } \nc(\mathcal{H}) \simeq \Sigma \nc(P_{n-k-1,k}^{(l)}).
\]

If $n \not\equiv 0,l$ (mod $k+1$), then Theorem~\ref{path-homotopy} yields $\Sigma \nc(P_{n-k-1,k}^{(l)}) \simeq *$, and hence $\nc(\mathcal{H}) \simeq *$.
This implies $\nc(C_{n,k}^{(l-1)}) \simeq \nc(C_{n,k}^{(l)})$ by \eqref{bbbbbb}.

If $n=(q+1)(k+1)+l$ for some non-negative integer $q$, then by Theorem \ref{path-homotopy} and the induction hypothesis, we have
\[
\nc(\mathcal{H}) \simeq \Sigma \nc(P_{n-k-1,k}^{(l)}) \simeq \Sigma S^{2q-1} \simeq S^{2q} \text{ and } \nc(C_{n,k}^{(l)}) \simeq *.
\]
Thus, by \eqref{bbbbbb}, we obtain
\[
\nc(C_{n,k}^{(l-1)}) \simeq \Sigma \nc(\mathcal{H}) \simeq \Sigma S^{2q} \simeq S^{2q+1}.
\]

If $n=q(k+1)+k+1$ for some positive integer $q$, then by Theorem \ref{path-homotopy} and the induction hypothesis, we have
\[
\nc(\mathcal{H}) \simeq \Sigma \nc(P_{n-k-1,k}^{(l)}) \simeq \Sigma S^{2q-2} \simeq S^{2q-1} \text{ and } \nc(C_{n,k}^{(l)}) \simeq \displaystyle\bigvee_{k-l}S^{2q}.
\]
Since any map $S^{2q-1} \to \displaystyle\bigvee_{k-l}S^{2q}$ is homotopic to a constant map by the cellular approximation theorem, by \eqref{bbbbbb}, we obtain
\[
\nc(C_{n,k}^{(l-1)}) \simeq \displaystyle\bigvee_{k-l}S^{2q} \vee \Sigma S^{2q-1} =\displaystyle\bigvee_{k-l+1}S^{2q}.
\] 
\end{proof}

\medskip

\section{Leray numbers of noncover complexes}\label{proof main 2}
In this section, we prove Theorem~\ref{leray numbers} which claims a stronger statement for some special cases.
Each part appears in a separate subsection.

\subsection{Strong total domination numbers}
Let $K$ be an abstract simplicial complex on $V$, and let $A$ and $B$ be subcomplexes of $K$ such that $K = A \cup B$.
If we take a subset $W$ of $V$, then the induced subcomplexes satisfies $K[W] = A[W] \cup B[W]$ and $(A \cap B)[W] = A[W] \cap B[W]$.
This implies the following.
\begin{proposition}\label{MV induced}
Let $K, A$ and $B$ be simplicial complexes such that $K = A \cup B$.
Then \[L(K) \leq \max\{L(A), L(B), L(A\cap B)+1\}.\]
\begin{proof}
Let $n = \max\{L(A), L(B), L(A\cap B)+1\}$.
For any $W \subset V(K)$, we know that $\tilde{H}_i(A[W]), \tilde{H}_i(B[W])$, $\tilde{H}_{i-1}((A \cap B)[W]) = 0$ for all $i \geq n$.
By applying the Mayer-Vietoris sequence for $K[W] = A[W] \cup B[W]$ and $(A \cap B)[W] = A[W] \cap B[W]$, we obtain that $\tilde{H}_i(K[W]) = 0$ for all $i \geq n$.
This shows $L(K) \leq n$.
\end{proof}
\end{proposition}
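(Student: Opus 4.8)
The plan is to unwind the definition of the Leray number and reduce the inequality to a vanishing statement about a single induced subcomplex, to which the Mayer--Vietoris sequence applies directly. Set $n = \max\{L(A), L(B), L(A\cap B)+1\}$; the goal is then to prove that $K$ is $n$-Leray, i.e.\ that $\tilde{H}_i(K[W]) = 0$ for every $W \subset V(K)$ and every $i \geq n$. First I would record the key combinatorial fact that passing to an induced subcomplex commutes with both union and intersection: since $K[W]$ consists precisely of the faces of $K$ contained in $W$, a face lies in $K[W]$ iff it is contained in $W$ and belongs to $A$ or to $B$, giving $K[W] = A[W] \cup B[W]$; the analogous argument yields $(A\cap B)[W] = A[W]\cap B[W]$. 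This is exactly what allows the hypotheses on $L(A), L(B), L(A\cap B)$ to transfer to the induced complexes.

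Next I would fix an arbitrary $W$ and an index $i \geq n$. By the choice of $n$ we have $i \geq L(A)$ and $i \geq L(B)$, so $\tilde{H}_i(A[W]) = \tilde{H}_i(B[W]) = 0$; moreover $i - 1 \geq n - 1 \geq L(A\cap B)$, so $\tilde{H}_{i-1}((A\cap B)[W]) = 0$. I would then invoke the Mayer--Vietoris sequence for the decomposition $K[W] = A[W]\cup B[W]$ with intersection $(A\cap B)[W]$, focusing on the segment
\[
\tilde{H}_i(A[W]) \oplus \tilde{H}_i(B[W]) \to \tilde{H}_i(K[W]) \to \tilde{H}_{i-1}((A\cap B)[W]).
\]
Both outer terms vanish, so exactness forces $\tilde{H}_i(K[W]) = 0$. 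Since $W$ and $i \geq n$ were arbitrary, $K$ is $n$-Leray, whence $L(K) \leq n$, which is the desired bound.

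The step requiring the most care is the bookkeeping of the homological degree: the intersection term enters the Mayer--Vietoris sequence shifted down by one, and this shift is precisely the reason the bound reads $L(A\cap B)+1$ rather than $L(A\cap B)$. I would also note that the Mayer--Vietoris sequence is being applied verbatim as stated earlier in the excerpt, and that it is legitimate here because $A$ and $B$ nonvoid forces $A\cap B$ to contain the empty face and hence to be nonvoid; if instead one of $A, B$ is void then $K$ equals the other complex and the inequality is immediate, so no separate treatment is needed.
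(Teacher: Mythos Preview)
Your proof is correct and follows essentially the same route as the paper: set $n = \max\{L(A), L(B), L(A\cap B)+1\}$, use that restriction to $W$ commutes with union and intersection, and apply Mayer--Vietoris to conclude $\tilde{H}_i(K[W]) = 0$ for all $i \geq n$. Your additional remarks on the degree shift and the void edge cases are accurate but not required for the argument.
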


The proof of the first part of Theorem~\ref{leray numbers} is based on Lemma~\ref{noncov union}, Lemma~\ref{noncov intersection} and Proposition~\ref{MV induced}.

	\begin{theorem}\label{gamma tilde leray}
	Let $\mathcal{H}$ be a hypergraph on $V$. 
	Suppose $\mathcal{H}$ contains no isolated vertices and every $e \in \mathcal{H}$ has size $|e| \leq 3$.
	Then the noncover complex $\nc(\mathcal{H})$ is $(|V| - \left\lceil\frac{\tilde{\gamma}(\mathcal{H})}{2}\right\rceil -1)$-Leray.
	\end{theorem}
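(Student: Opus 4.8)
The plan is to prove Theorem~\ref{gamma tilde leray} by induction on $|V| + |\mathcal{H}|$, mirroring the structure of the proof of Theorem~\ref{eta dual}, but with $\eta$-type vanishing replaced by the stronger Leray bound and with the Mayer--Vietoris step replaced by Proposition~\ref{MV induced}. First I would dispose of the degenerate cases: if $\mathcal{H} = \emptyset$ then $\nc(\mathcal{H})$ is void and there is nothing to prove; if $\binom{V}{1} \subseteq \mathcal{H}$ then $\nc(\mathcal{H}) = \partial\Delta_V$, which is $(|V|-2)$-Leray, and $\tilde\gamma(\mathcal{H}) = 0$ makes the claimed bound $|V|-1 \geq |V|-2$; if some edge $f \subsetneq e$ then $\nc(\mathcal{H}) = \nc(\mathcal{H}-e)$ by Observation~\ref{noncover inclusion-minimal} and the induction hypothesis applies to $\mathcal{H}-e$ (after deleting any isolated vertex created, whose presence only makes $\nc$ contractible, hence $(-1)$-Leray, which is fine). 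So we may assume every edge is inclusion-minimal, $\binom{V}{1}\nsubseteq\mathcal{H}$, and $\mathcal{H}$ has no isolated vertex.

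Next, by Corollary~\ref{cor:ineq} (or directly by Lemma~\ref{inequalities}(i)) choose $e \in \mathcal{H}$ with $|e|\geq 2$ and $\tilde\gamma(\mathcal{H}\neg e) \geq \tilde\gamma(\mathcal{H}) - 2|e| + 2$; since $|e|\leq 3$, this gives $\tilde\gamma(\mathcal{H}\neg e) \geq \tilde\gamma(\mathcal{H}) - 4$, so $\lceil\tilde\gamma(\mathcal{H}\neg e)/2\rceil \geq \lceil\tilde\gamma(\mathcal{H})/2\rceil - 2$. I would apply Lemma~\ref{noncov union} and Lemma~\ref{noncov intersection} to write $\nc(\mathcal{H}) = \nc(\mathcal{H}-e)\cup\Delta_{V\setminus e}$ with intersection $\nc(\mathcal{H}\neg e)$, and then invoke Proposition~\ref{MV induced}:
\[
L(\nc(\mathcal{H})) \leq \max\{\, L(\nc(\mathcal{H}-e)),\; L(\Delta_{V\setminus e}),\; L(\nc(\mathcal{H}\neg e)) + 1 \,\}.
\]
The middle term is $0$ (a simplex is $0$-Leray), so it is harmless. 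For the first term, $\mathcal{H}-e$ lives on the same $V$ with fewer edges and $\tilde\gamma(\mathcal{H}-e)\geq\tilde\gamma(\mathcal{H})$ (Corollary~\ref{cor:ineq}), so by induction $L(\nc(\mathcal{H}-e)) \leq |V| - \lceil\tilde\gamma(\mathcal{H}-e)/2\rceil - 1 \leq |V| - \lceil\tilde\gamma(\mathcal{H})/2\rceil - 1$, as needed — again handling a possibly created isolated vertex separately. For the third term, $\mathcal{H}\neg e$ is a hypergraph on $V\setminus e$ with $|V\setminus e| = |V| - |e| \leq |V| - 2$, still with edges of size $\leq 3$ (edge-annihilation only shrinks edges), so by induction
\[
L(\nc(\mathcal{H}\neg e)) + 1 \leq \big(|V| - |e| - \lceil\tilde\gamma(\mathcal{H}\neg e)/2\rceil - 1\big) + 1.
\]
Plugging in $\lceil\tilde\gamma(\mathcal{H}\neg e)/2\rceil \geq \lceil\tilde\gamma(\mathcal{H})/2\rceil - 2$ and $|e|\geq 2$ gives $L(\nc(\mathcal{H}\neg e)) + 1 \leq |V| - 2 - \lceil\tilde\gamma(\mathcal{H})/2\rceil + 2 + 1 - 1 = |V| - \lceil\tilde\gamma(\mathcal{H})/2\rceil$, which is $1$ more than the target and therefore not quite good enough.

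The main obstacle is exactly this loss of $1$ in the third term: the crude bound $\tilde\gamma(\mathcal{H}\neg e) \geq \tilde\gamma(\mathcal{H}) - 4$ together with $|e| = 2$ is too weak. The fix is a more careful case analysis on $|e|$, and this is where the hypothesis $|e|\leq 3$ is used essentially. When $|e| = 3$, the computation above already yields $L(\nc(\mathcal{H}\neg e)) + 1 \leq |V| - 3 - \lceil\tilde\gamma(\mathcal{H})/2\rceil + 2 + 1 = |V| - \lceil\tilde\gamma(\mathcal{H})/2\rceil$ — still off by one — so one needs to sharpen Lemma~\ref{inequalities}(i) in the $|e|=3$ case to $\tilde\gamma(\mathcal{H}\neg e) \geq \tilde\gamma(\mathcal{H}) - 3$ (an $S$ dominating $V\setminus e$ can be enlarged by at most $|e|$, but since $e$ dominates its own vertices that are in some edge with two elements of $e$, one often saves a vertex), or better, to choose $e$ so that one vertex of $e$ is already dominated by the other two. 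When $|e| = 2$, say $e = \{u,w\}$, I would instead not annihilate blindly but pick $e$ adjacent to a vertex of a minimum dominating configuration; the relation $\tilde\gamma(\mathcal{H}\neg e) \geq \tilde\gamma(\mathcal{H}) - 2$ gives $\lceil\tilde\gamma(\mathcal{H}\neg e)/2\rceil \geq \lceil\tilde\gamma(\mathcal{H})/2\rceil - 1$, so $L(\nc(\mathcal{H}\neg e)) + 1 \leq |V| - 2 - \lceil\tilde\gamma(\mathcal{H})/2\rceil + 1 + 1 = |V| - \lceil\tilde\gamma(\mathcal{H})/2\rceil$ — off by one again — unless $\lceil\cdot\rceil$ parity helps, i.e. unless $\tilde\gamma(\mathcal{H}\neg e) \geq \tilde\gamma(\mathcal{H}) - 1$ for a well-chosen $e$. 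So the real content of the proof is establishing the right sharpened version of Lemma~\ref{inequalities}(i) under $|e|\leq 3$ that saves that one extra unit (e.g. $\tilde\gamma(\mathcal{H}\neg e) \geq \tilde\gamma(\mathcal{H}) - 2|e| + 3$ for a suitably chosen inclusion-minimal edge $e$, exploiting that we may pick $e$ containing a vertex of minimum dominating degree or contained in the neighborhood structure of such a vertex), after which Proposition~\ref{MV induced} closes the induction. I expect the bulk of the work — and the place where the $|e|\leq 3$ restriction is genuinely needed, as Example~\ref{gamma tilde example} shows it must be — to be this combinatorial domination estimate, with the topological assembly being routine once it is in hand.
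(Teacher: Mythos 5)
Your skeleton (induction on $|V|+|\mathcal{H}|$, Observation~\ref{noncover inclusion-minimal}, the decomposition of Lemmas~\ref{noncov union} and \ref{noncov intersection}, and Proposition~\ref{MV induced}) is the same as the paper's, but your accounting of the intersection term is wrong, and this leads you to misdiagnose where the difficulty lies. Keeping $|e|$ coupled in the two places it enters, Lemma~\ref{inequalities}(i) gives $\lceil\tilde{\gamma}(\mathcal{H}\neg~e)/2\rceil \geq \lceil\tilde{\gamma}(\mathcal{H})/2\rceil - |e| + 1$, and since $|V(\mathcal{H}\neg~e)| = |V|-|e|$ the induction hypothesis yields
$L(\nc(\mathcal{H}\neg~e))+1 \leq \bigl(|V|-|e|\bigr) - \bigl(\lceil\tilde{\gamma}(\mathcal{H})/2\rceil - |e| + 1\bigr) - 1 + 1 = |V| - \lceil\tilde{\gamma}(\mathcal{H})/2\rceil - 1$,
which is exactly the target: there is no ``loss of $1$''. (In your $|e|=2$ computation you used the slack $2$ that is only available when $|e|=3$, and in your $|e|=3$ computation you added an extra $+1$.) Consequently the sharpened domination inequality you declare to be ``the real content'' ($\tilde{\gamma}(\mathcal{H}\neg~e) \geq \tilde{\gamma}(\mathcal{H})-2|e|+3$ for a well-chosen $e$) is neither proved in your proposal nor needed; the paper uses no such statement.

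The genuine difficulty, and the place where $|e|\leq 3$ is actually used, is the term $L(\nc(\mathcal{H}-e))$ when deleting $e$ creates isolated vertices, and here your proposal has a real gap. You dismiss this case by saying an isolated vertex makes the complex contractible, ``hence $(-1)$-Leray, which is fine''; but contractibility does not bound the Leray number, since Lerayness is a condition on all induced subcomplexes, and induced subcomplexes avoiding the isolated vertices can carry homology. What is true is $L(\nc(\mathcal{H}-e)) = L(\nc(\mathcal{H}'))$, where $\mathcal{H}'$ is $\mathcal{H}-e$ with its $k\geq 1$ isolated vertices removed; since $\tilde{\gamma}(\mathcal{H}-e)=\infty$, the induction hypothesis must be applied to $\mathcal{H}'$, which lives on only $|V|-k$ vertices, so one must prove the domination estimate $\tilde{\gamma}(\mathcal{H}')+2k \geq \tilde{\gamma}(\mathcal{H})$. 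This is precisely where the paper uses $|e|\leq 3$: for $k=1$, a minimum dominating set of $\mathcal{H}'$ together with $e\setminus\{v\}$ dominates $V$, a deficit of $|e|-1\leq 2$; for $k\geq 2$, adding all of $e$ gives a deficit $|e|\leq 3 < 2k$. Example~\ref{gamma tilde example} shows that it is this step, not the edge-annihilation step, that fails for large edges. Without this estimate your induction does not close, so the proposal as written is incomplete.
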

	\begin{proof}
	We apply induction on $|\mathcal{H}| + |V|$.
	If $\mathcal{H}$ has no edge, then the statement is true since $\nc(\mathcal{H})$ is a void complex.
	If $\mathcal{H} = \{V\}$, then the statement is true since $\nc(\mathcal{H})$ is an empty complex.
	If $\binom{V}{1} \subset \mathcal{H}$, then the statement is true since $\tilde{\gamma}(\mathcal{H}) = 0$ and $\nc(\mathcal{H})$ is the boundary of the simplex on $V$ which is $(|V|-1)$-Leray.
    Note that any proper induced subcomplex is a simplex.
	
	\smallskip
	
	Take an edge $e$ with $|e| \geq 2$.
	If $e$ is not inclusion-minimal, then $\tilde{\gamma}(\mathcal{H}) \leq \tilde{\gamma}(\mathcal{H} - e) < \infty$ and $\nc(\mathcal{H}) = \nc(\mathcal{H} - e)$ by Observation~\ref{noncover inclusion-minimal}.
	Hence the statement is true by the induction hypothesis.
	If $e$ is inclusion-minimal, then we apply the exact sequence \eqref{exact noncover}.
	By Lemma~\ref{inequalities}, we have $\tilde{\gamma}(\mathcal{H}\neg~e) \geq \tilde{\gamma}(\mathcal{H}) - 2|e| + 2$, thus $\nc(\mathcal{H} \neg~e)$ is $(|V|-\left\lceil\frac{\tilde{\gamma}(\mathcal{H})}{2}\right\rceil-2)$-Leray by the induction hypothesis. 
	By Proposition~\ref{MV induced}, it is sufficient to show that $\nc(\mathcal{H} -e)$ is $(|V| - \left\lceil\frac{\tilde{\gamma}(\mathcal{H})}{2}\right\rceil - 1)$-Leray.

	Assume that $\mathcal{H} - e$ has no isolated vertices.
	Then we obviously have $\tilde{\gamma}(\mathcal{H}) \leq \tilde{\gamma}(\mathcal{H}-e) < \infty$.
	By the induction hypothesis, $\nc(\mathcal{H} - e)$ is $(|V| - \left\lceil\frac{\tilde{\gamma}(\mathcal{H})}{2}\right\rceil - 1)$-Leray.
	Thus we may assume that deleting the edge $e$ from $\mathcal{H}$ produces $k$ isolated vertices for some positive integer $k$.
	Let $\mathcal{H}'$ be the hypergraph obtained from $\mathcal{H} - e$ by deleting all isolated vertices.
	We claim that if $|e| \leq 3$, then $\tilde{\gamma}(\mathcal{H}') + 2k \geq \tilde{\gamma}(\mathcal{H})$.
	Then by induction, since $|V(\mathcal{H}')| = |V| - k$, we obtain that $\nc(\mathcal{H}')$ is $(|V| - \tilde{\gamma}(\mathcal{H}) - 1)$-Leray.
	
	Let $S \subset V(\mathcal{H}')$ be a minimum set which dominates $V(\mathcal{H}')$, i.e. $|S| = \tilde{\gamma}(\mathcal{H}')$. 
	If $k = 1$, let $v \in e$ be the only isolated vertex in $\mathcal{H} - e$.
	Then $S \cup (e\setminus\{v\})$ dominates $\mathcal{H}$, thus $\tilde{\gamma}(\mathcal{H}') + |e| - 1 \geq \tilde{\gamma}(\mathcal{H})$.
	Since $|e| \leq 3$, we obtain
	\[\tilde{\gamma}(\mathcal{H}') + 2 \geq \tilde{\gamma}(\mathcal{H}') + |e| - 1 \geq \tilde{\gamma}(\mathcal{H}).\]
	If $k \geq 2$, we have $\tilde{\gamma}(\mathcal{H}') + |e| \geq \tilde{\gamma}(\mathcal{H})$ since $S \cup e$ dominates $\mathcal{H}$.
	Since $|e| \leq 3 < 2k$, the claim is true because
	\[\tilde{\gamma}(\mathcal{H}') + 2k > \tilde{\gamma}(\mathcal{H}') + |e| \geq \tilde{\gamma}(\mathcal{H}).\]
	This completes the proof.
	\end{proof}
	
	The following example shows Theorem~\ref{gamma tilde leray} is the best possible in the sense that the condition $|e| \leq 3$ for every edge cannot be improved.
	\begin{example}\label{gamma tilde example}
	Let $\mathcal{H}_r$ be a hypergraph on $V = \{v_1,\ldots,v_{2r+1}\}$, whose edges are
	\begin{align*}
	\mathcal{H}_r = \{&\{v_1,\ldots,v_r\},\{v_2,v_{r+1}\},\{v_3,v_{r+1}\},\ldots,\{v_r,v_{r+1}\},
	\\&\{v_{r+1},v_{r+2}\},\{v_{r+1},v_{r+3}\},\ldots,\{v_{r+1},v_{2r}\},\{v_{r+2},\ldots,v_{2r+1}\}\}.
	\end{align*}
	We will show that if $r \geq 4$, then $\nc(\mathcal{H}_r)$ is not $(|V| - \left\lceil\frac{\tilde{\gamma}(\mathcal{H}_r)}{2}\right\rceil -1)$-Leray.
	
	We first claim that $\tilde{\gamma}(\mathcal{H}_r) = 2r-1$.
	Since $\{v_1,\ldots,v_r\}$ is the only edge which contains the vertex $v_1$, we need $\{v_2,\ldots,v_r\}$ to dominate $v_1$.
	Similarly, we need $\{v_{r+2},\ldots,v_{2r}\}$ to dominate $v_{2r+1}$.
	Thus every set $S \subset V$ which dominates $V$ should contain the set $W = \{v_2,\ldots,v_r,v_{r+2},\ldots,v_{2r}\}$ of $2r-2$ vertices.
	However, $S \setminus W$ should not be empty since $W$ cannot dominate any vertex in $W$.
	This implies $|S| \geq 2r-1$.
	Indeed, since the set $W \cup \{v_{r+1}\}$ dominates $V$, we have $\tilde{\gamma}(\mathcal{H}_r) = 2r-1$ and $|V| - \left\lceil\frac{\tilde{\gamma}(\mathcal{H}_r)}{2}\right\rceil -1  = r$.
	On the other hand, the induced subcomplex $\nc(\mathcal{H})[W]$ is the boundary of the simplex on $W$.
	Hence we have $H_{2r-4}(\nc(\mathcal{H}_r)[W]) \neq 0$, showing that $\nc(\mathcal{H}_r)$ is not $r$-Leray whenever $r \geq 4$.
	See Figure~\ref{fig:gammatilde} for the illustration when $r = 4$.	
	\begin{figure}[htbp]
    \centering
    \includegraphics[scale=1.05]{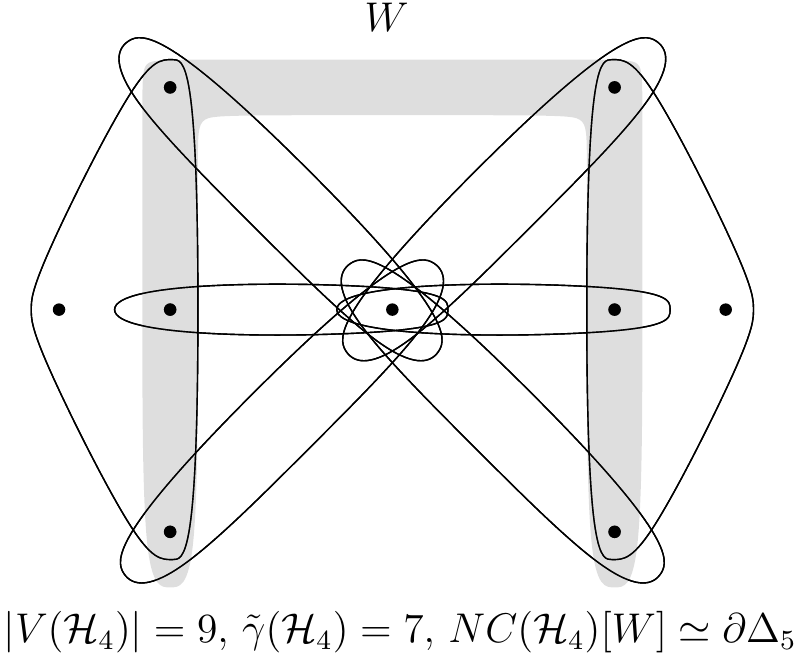}
    \caption{$|V(\mathcal{H}_4)| - \left\lceil\frac{\tilde{\gamma}(\mathcal{H}_4)}{2}\right\rceil -1 = 4$ but $\nc(\mathcal{H}_4)$ is not $4$-Leray.}
    \label{fig:gammatilde}
\end{figure}
	\end{example}

\subsection{Strong independence domination numbers}
	In this section, we prove the second part of Theorem~\ref{leray numbers}.
	When $\mathcal{H}$ is a graph, it is implied by the main result in \cite{CKP19}.
	Our result gives an alternative proof with a slight extension.
	
	For a hypergraph $\mathcal{H}$ on $V$ and $v \in V$, let \[\mathcal{H} \neg~v := \{e\setminus\{v\}: e\in\mathcal{H}\}.\]
	Clearly, if $\mathcal{H}$ has no isolated vertices, then $\mathcal{H} \neg~v$ also has no isolated vertices.
	Note also that $\nc(\mathcal{H} \neg~v) = \nc(\mathcal{H})[V\setminus\{v\}]$.
	
	\begin{theorem}\label{gamma si leray}
	Let $\mathcal{H}$ be a hypergraph on $V$. 
	Suppose $\mathcal{H}$ contains no isolated vertices and every $e \in \mathcal{H}$ has size $|e| \leq 2$.
	Then the noncover complex $\nc(\mathcal{H})$ is $(|V| - \gamma_{si}(\mathcal{H}) -1)$-Leray.	\end{theorem}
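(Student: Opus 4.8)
The plan is to run an induction on $|V|+|\mathcal{H}|$ that parallels the proof of Theorem~\ref{gamma tilde leray}, with Lemma~\ref{inequalities}(ii) replacing Lemma~\ref{inequalities}(i) and with Proposition~\ref{MV induced} (applied to Lemma~\ref{noncov union} and Lemma~\ref{noncov intersection}) replacing the exact sequence~\eqref{exact noncover}. Two elementary facts will be used repeatedly. First, if $\mathcal{H}$ has no isolated vertices and every edge has size at most $2$, then $\gamma_{si}(\mathcal{H})\le|V|-1$: if a strongly independent $A\neq\emptyset$ realizes $\gamma_{si}(\mathcal{H})$, then each $a\in A$ lies in a pair $\{a,b\}\in\mathcal{H}$ with $b\notin A$ (there is an edge through $a$; it has size $\ge 2$ since $\{a\}\notin\mathcal{H}$, hence exactly $2$; and $b\notin A$ since $A$ is strongly independent), so $V\setminus A$ dominates $A$ and $\gamma(\mathcal{H};A)\le|V|-|A|\le|V|-1$. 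In particular the target $|V|-\gamma_{si}(\mathcal{H})-1$ is always $\ge 0$. Second, if $\mathcal{H}\neq\emptyset$ has isolated vertices and $\mathcal{H}^{\circ}$ is obtained by deleting all of them, then $L(\nc(\mathcal{H}))=L(\nc(\mathcal{H}^{\circ}))$ (see the discussion following Theorem~\ref{leray numbers}).

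For the base cases: if $\mathcal{H}=\emptyset$ then $\nc(\mathcal{H})$ is void; if $\mathcal{H}=\{V\}$ then $\nc(\mathcal{H})=\{\emptyset\}$ is $0$-Leray while $\gamma_{si}(\mathcal{H})=|V|-1$; and if $\binom{V}{1}\subseteq\mathcal{H}$ then $\gamma_{si}(\mathcal{H})=0$ and $\nc(\mathcal{H})$ is the boundary of the simplex on $V$, which is $(|V|-1)$-Leray. So assume none of these; in particular $\mathcal{H}$ has no isolated vertices and $\binom{V}{1}\nsubseteq\mathcal{H}$. Next I reduce to the case that every edge of $\mathcal{H}$ is inclusion-minimal: if $e\in\mathcal{H}$ is not inclusion-minimal then $\nc(\mathcal{H})=\nc(\mathcal{H}-e)$ by Observation~\ref{noncover inclusion-minimal}, and either $\mathcal{H}-e$ has no isolated vertices, in which case $\gamma_{si}(\mathcal{H}-e)\ge\gamma_{si}(\mathcal{H})$ (a strongly independent set of $\mathcal{H}$ stays one in $\mathcal{H}-e$, and deleting an edge cannot decrease the domination number of a fixed set) and the induction hypothesis finishes, or $\mathcal{H}-e$ has isolated vertices, in which case one passes to $(\mathcal{H}-e)^{\circ}$ and uses the induction hypothesis together with the domination inequality stated below.

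Now assume every edge is inclusion-minimal. By Lemma~\ref{inequalities}(ii) choose $e\in\mathcal{H}$ with $|e|\ge 2$ and $\gamma_{si}(\mathcal{H}\neg~e)\ge\gamma_{si}(\mathcal{H})-|e|+1$; as all edges have size at most $2$ we get $|e|=2$, and $e$ is inclusion-minimal. By Lemma~\ref{noncov union} and Lemma~\ref{noncov intersection}, $\nc(\mathcal{H})=\nc(\mathcal{H}-e)\cup\Delta_{V\setminus e}$ with intersection $\nc(\mathcal{H}\neg~e)$, so by Proposition~\ref{MV induced} it suffices to verify $L(\nc(\mathcal{H}-e))\le|V|-\gamma_{si}(\mathcal{H})-1$, $L(\Delta_{V\setminus e})\le|V|-\gamma_{si}(\mathcal{H})-1$, and $L(\nc(\mathcal{H}\neg~e))\le|V|-\gamma_{si}(\mathcal{H})-2$. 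The middle one holds since $\Delta_{V\setminus e}$ is $0$-Leray and the target is $\ge 0$. For $\nc(\mathcal{H}\neg~e)$: the hypergraph $\mathcal{H}\neg~e$ has $|V|-2$ vertices, edges of size at most $2$, and no isolated vertices (every $w\in V\setminus e$ lies in some $f\in\mathcal{H}$, and then $w\in f\setminus e\in\mathcal{H}\neg~e$), so the induction hypothesis with $\gamma_{si}(\mathcal{H}\neg~e)\ge\gamma_{si}(\mathcal{H})-1$ gives $L(\nc(\mathcal{H}\neg~e))\le(|V|-2)-(\gamma_{si}(\mathcal{H})-1)-1$. For $\nc(\mathcal{H}-e)$: if $\mathcal{H}-e$ has no isolated vertices then $\gamma_{si}(\mathcal{H}-e)\ge\gamma_{si}(\mathcal{H})$ and the induction hypothesis applies directly; otherwise the isolated vertices of $\mathcal{H}-e$ form a nonempty subset $e_{0}$ of $e$, and with $\mathcal{H}':=(\mathcal{H}-e)^{\circ}$ on $V\setminus e_{0}$ we get $L(\nc(\mathcal{H}-e))=L(\nc(\mathcal{H}'))\le(|V|-|e_{0}|)-\gamma_{si}(\mathcal{H}')-1$, which is $\le|V|-\gamma_{si}(\mathcal{H})-1$ as soon as $\gamma_{si}(\mathcal{H}')+|e_{0}|\ge\gamma_{si}(\mathcal{H})$.

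The main obstacle is thus the domination inequality $\gamma_{si}(\mathcal{H})\le\gamma_{si}((\mathcal{H}-e)^{\circ})+|e|-1$, the $\gamma_{si}$-analogue of the bound $\tilde{\gamma}(\mathcal{H}')+2k\ge\tilde{\gamma}(\mathcal{H})$ used inside the proof of Theorem~\ref{gamma tilde leray}; since $|e|=2$ and $|e_{0}|\ge 1$ this is exactly what the previous paragraph needs. I would prove it thus: let $A$ be strongly independent with $\gamma_{si}(\mathcal{H})=\gamma(\mathcal{H};A)$. Then $A':=A\cap(V\setminus e_{0})$ is strongly independent in $(\mathcal{H}-e)^{\circ}$, whose edges form a subfamily of those of $\mathcal{H}$; take $S\subseteq V\setminus e_{0}$ of size $\gamma((\mathcal{H}-e)^{\circ};A')\le\gamma_{si}((\mathcal{H}-e)^{\circ})$ dominating $A'$ there. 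Since $A$ is strongly independent, $|A\cap e_{0}|\le 1$; if $A\cap e_{0}=\{v\}$ then $e\setminus\{v\}$ dominates $v$ in $\mathcal{H}$ via the edge $e$, so $S\cup(e\setminus\{v\})$ dominates $A$ in $\mathcal{H}$ and $\gamma(\mathcal{H};A)\le|S|+|e|-1$; if $A\cap e_{0}=\emptyset$ already $S$ dominates $A$ in $\mathcal{H}$. The remaining checks are routine: each inductive call is to a hypergraph with no isolated vertices and all edges of size at most $2$ (for $(\mathcal{H}-e)^{\circ}$ and $\mathcal{H}\neg~e$ this was noted above) and with strictly smaller $|V|+|\mathcal{H}|$, so the induction hypothesis legitimately applies.
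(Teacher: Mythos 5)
Your proposal is correct, but it takes a genuinely different route from the paper's own proof of Theorem~\ref{gamma si leray}. The paper argues by induction on $|V|$ alone via vertex deletion: for each $v\in V$ it considers $\mathcal{H}\neg~v=\{e\setminus\{v\}:e\in\mathcal{H}\}$, notes $\nc(\mathcal{H}\neg~v)=\nc(\mathcal{H})[V\setminus\{v\}]$, and proves the single inequality $\gamma_{si}(\mathcal{H}\neg~v)\ge\gamma_{si}(\mathcal{H})-1$ (the hypothesis $|e|\le 2$ enters there when $v\in A$, to pick $u\notin A$ with $\{u,v\}\in\mathcal{H}$); this controls all proper induced subcomplexes, while the required vanishing of $\tilde{H}_i(\nc(\mathcal{H}))$ itself for $i\ge|V|-\gamma_{si}(\mathcal{H})-1$ comes from Theorem~\ref{eta dual}. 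You instead transplant the edge-deletion/edge-annihilation Mayer--Vietoris induction used for Theorem~\ref{gamma tilde leray}, with Lemma~\ref{inequalities}(ii) in place of (i) and Proposition~\ref{MV induced} doing the induced-subcomplex bookkeeping; the genuinely new ingredient you supply, and prove correctly, is the inequality $\gamma_{si}(\mathcal{H})\le\gamma_{si}((\mathcal{H}-e)^{\circ})+|e|-1$ for the hypergraph obtained after deleting $e$ together with the isolated vertices this creates, and your observation that $|e_0|\ge 1=|e|-1$ is exactly where $|e|\le 2$ is needed mirrors how $|e|\le 3$ is used in part (a) and is consistent with Example~\ref{gammasi example}. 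The trade-off: the paper's vertex-deletion proof is much shorter, because Lerayness is defined through induced subcomplexes, but it is not self-contained (it leans on Theorem~\ref{eta dual} for the top-level homology); your proof is longer and needs the isolated-vertex bookkeeping, but it avoids Theorem~\ref{eta dual} entirely and makes the structural parallel with part (a) explicit. If you write it up, spell out the minor points you passed over: that $\mathcal{H}-e\neq\emptyset$ whenever you invoke $L(\nc(\mathcal{H}-e))=L(\nc((\mathcal{H}-e)^{\circ}))$ (otherwise $\mathcal{H}=\{e\}$ forces $V=e$, one of your base cases), and the verification, which you did sketch, that $\mathcal{H}\neg~e$ and $(\mathcal{H}-e)^{\circ}$ inherit both hypotheses so the induction hypothesis legitimately applies.
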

	\begin{proof}
	We apply induction on $|V|$.
	If $|V| = 1$, then the statement is obviously true.
	Thus we may assume that $|V| > 1$.
	It is sufficient to show the inequality $\gamma_{si}(\mathcal{H} \neg~v) \geq \gamma_{si}(\mathcal{H}) - 1$ for every $v \in V$.
	Then we have \[|V(\mathcal{H}\neg~v)|-\gamma_{si}(\mathcal{H}\neg~v)-1 \leq |V|-\gamma_{si}(\mathcal{H})-1,\] and it follows from the induction hypothesis that $\nc(\mathcal{H})[V\setminus\{v\}]$ is $(|V(\mathcal{H})|-\gamma_{si}(\mathcal{H})-1)$-Leray.
	Let $A$ be a strongly independent set of $\mathcal{H}$ such that $\gamma(\mathcal{H}; A) = \gamma_{si}(\mathcal{H})$.
	We will show that for every vertex $v$ there exists a set $S$ in $\mathcal{H}\neg~v$ such that $|S| \leq \gamma_{si}(\mathcal{H}\neg~v)$ and $|S|+1 \geq \gamma_{si}(\mathcal{H})$.
	
	If $v \notin A$, let $A'$ be the set of all vertices in $A$ which are not dominated by $v$.
	Obviously $A'$ is a strongly independent set of $\mathcal{H} \neg~v$.
	Let $S$ be a smallest set which dominates $A'$ in $\mathcal{H}\neg~v$.
	Then $S \cup \{v\}$ dominates $A$ in $\mathcal{H}$.
	If $v \in A$, we take a vertex $u \notin A$ such that $\{u,v\} \in \mathcal{H}$.
	The existence of $u$ is guaranteed because $\mathcal{H}$ has no isolated vertices.
	Let $S$ be a smallest set which dominates $A\setminus\{v\}$ in $\mathcal{H}\neg~v$.
	Then $S \cup \{u\}$ dominates $A$ in $\mathcal{H}$.
	In either case, we have $|S| \leq \gamma_{si}(\mathcal{H}\neg~v)$ and $|S|+1 \geq \gamma_{si}(\mathcal{H})$, as desired.
	\end{proof}

	The following example shows that the condition $|e| \leq 2$ for every edge $e \in \mathcal{H}$ in Theorem~\ref{gamma si leray} cannot be improved.
		\begin{example}\label{gammasi example}
	For $r \geq 3$, consider an $r$-uniform hypergraph
	\[\mathcal{F}_r := \{\{(i,1),\ldots,(i,r))\}: i \in [r]\} \cup \{\{(1,i),\ldots,(r,i)\}: i \in [r]\setminus\{1\}\}\]
	defined on $[r]\times[r]$.
	Clearly $A = \{(i,1): i \in [r]\}$ is a strongly independent set of $\mathcal{F}_r$.
	Since $\{(i,1),\ldots,(i,r))\}$ is the only edge in $\mathcal{H}_r$ which contains the vertex $(i,1)$ for each $i$, the set \[\{(i, j): i \in [r]\;\text{and}\;j \in [r]\setminus\{1\}\}\] is the only set which dominates $A$.
	This shows that $\gamma_{si}(\mathcal{F}_r) \geq (r-1)r$.
	
	On the other hand, we will show that $\nc(\mathcal{F}_r)$ is not $(r-1)$-Leray whenever $r \geq 3$.
	Note that if $r \geq 3$ then $|V(\mathcal{F}_r)| - \gamma_{si}(\mathcal{F}_r) - 1 \leq r-1 \leq 2r-4$.
	Let \[W = \{(i,1): i \in [r]\setminus\{r\}\}\cup\{(r,i): i \in [r]\setminus\{1\}\}.\]
	It is clear that the induced subcomplex $\nc(\mathcal{F}_r)[W]$ is the boundary of the simplex on $W$, thus we have $H_{2r-4}(\nc(\mathcal{F}_r)[W]) \neq 0$.
	See Figure~\ref{fig:gammasi} for the illustration when $r = 4$.
\begin{figure}[htbp]
    \centering
    \includegraphics[scale=.66]{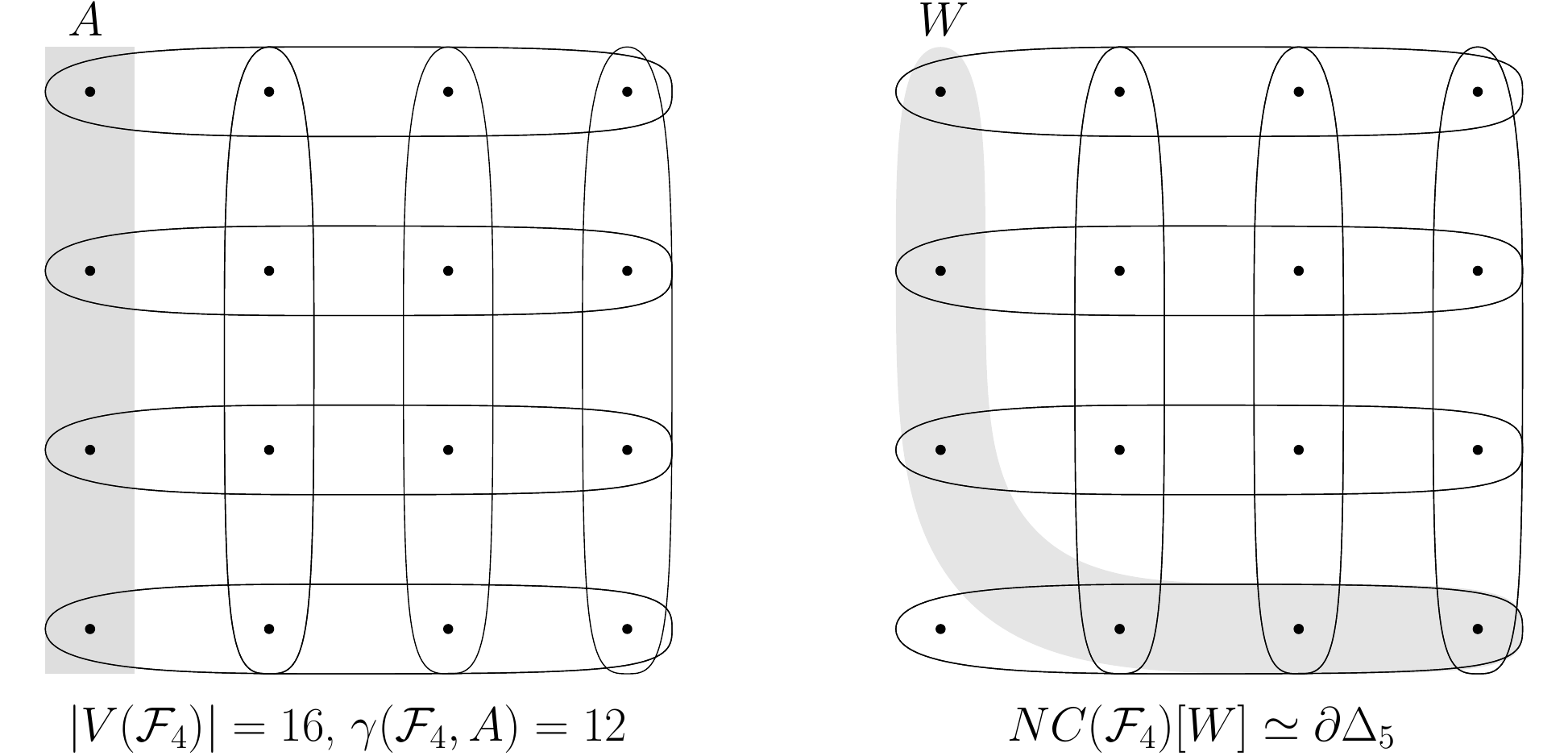}
    \caption{$|V(\mathcal{F}_4)| - \gamma_{si}(\mathcal{F}_4) -1 \leq 3$ but $\nc(\mathcal{F}_4)$ is not $4$-Leray.}
    \label{fig:gammasi}
\end{figure}
	\end{example}

\subsection{Edgewise-domination numbers}	
	For $\gamma_E(\mathcal{H})$, we can prove the following without any restriction on the size of edges in $\mathcal{H}$.
	\begin{theorem}\label{gamma e leray}
	Let $\mathcal{H}$ be a hypergraph on $V$ with no isolated vertices.
	Then the noncover complex $\nc(\mathcal{H})$ is $(|V| - \gamma_{E}(\mathcal{H}) -1)$-Leray.
	\end{theorem}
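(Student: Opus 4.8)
The plan is to mimic the inductive structure already used for Theorems~\ref{eta dual} and \ref{gamma tilde leray}, but now inducting on $|V|$ while using the edge-annihilation $\mathcal{H}\neg~v$ of a \emph{vertex} rather than an edge, exactly as in the proof of Theorem~\ref{gamma si leray}. Recall the two facts established earlier: $\nc(\mathcal{H}\neg~v)=\nc(\mathcal{H})[V\setminus\{v\}]$, and $\mathcal{H}\neg~v$ has no isolated vertices whenever $\mathcal{H}$ does. So if I can show $\gamma_E(\mathcal{H}\neg~v)\ge \gamma_E(\mathcal{H})-1$ for every $v\in V$, then the induction hypothesis applied to $\mathcal{H}\neg~v$ gives that $\nc(\mathcal{H})[V\setminus\{v\}]$ is $(|V|-\gamma_E(\mathcal{H})-1)$-Leray, and since this holds for every vertex deletion, and a complex on $|V|$ vertices with all proper induced subcomplexes being $d$-Leray is $d$-Leray once we also control $\tilde H_i(\nc(\mathcal{H}))$ itself in that range — but that last piece is exactly Theorem~\ref{eta dual} (via $\gamma_E$), so the top-dimensional check is already in hand.

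\medskip

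\textbf{The key inequality.} The heart of the argument is: for every $v\in V$, $\gamma_E(\mathcal{H}\neg~v)\ge \gamma_E(\mathcal{H})-1$. To see this, let $\mathcal{F}\subset\mathcal{H}\neg~v$ be a minimum family whose union dominates $V(\mathcal{H}\neg~v)=V\setminus\{v\}$, so $|\mathcal{F}|=\gamma_E(\mathcal{H}\neg~v)$. Each $A\in\mathcal{F}$ has the form $e_A\setminus\{v\}$ for some $e_A\in\mathcal{H}$; collect these into $\mathcal{F}'=\{e_A:A\in\mathcal{F}\}\subset\mathcal{H}$, with $|\mathcal{F}'|\le|\mathcal{F}|$. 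Since $\mathcal{H}$ has no isolated vertices, pick any edge $e_0\in\mathcal{H}$ containing $v$, and set $\mathcal{G}=\mathcal{F}'\cup\{e_0\}$. Then $\bigcup_{e\in\mathcal{G}}e\supseteq\big(\bigcup_{A\in\mathcal{F}}A\big)\cup\{v\}$ dominates all of $V\setminus\{v\}$ (anything dominated in $\mathcal{H}\neg~v$ by $A$ is dominated in $\mathcal{H}$ by $e_A$, since edges of $\mathcal{H}\neg~v$ are subsets of edges of $\mathcal{H}$) and also dominates $v$ (because $v\in e_0$ and $e_0$ has another vertex). Hence $\gamma_E(\mathcal{H})\le|\mathcal{G}|\le|\mathcal{F}|+1=\gamma_E(\mathcal{H}\neg~v)+1$, which is the claim.

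\medskip

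\textbf{Base case and top-dimensional homology.} For $|V|=1$ the statement is trivial since there is no hypergraph without isolated vertices on a single vertex, or $\nc$ is void/empty. For the inductive step, once all proper induced subcomplexes $\nc(\mathcal{H})[W]$ ($W\subsetneq V$) are known to be $(|V|-\gamma_E(\mathcal{H})-1)$-Leray — which follows by applying the induction hypothesis to $\mathcal{H}\neg~v$ for $v\in V\setminus W$ together with the monotonicity $\gamma_E(\mathcal{H}')\ge\gamma_E(\mathcal{H})$ when $\mathcal{H}'=\mathcal{H}\neg~v$ restricted further (each deletion drops $\gamma_E$ by at most $1$, so iterating gives $\gamma_E(\mathcal{H}[W]\text{-type object})\ge\gamma_E(\mathcal{H})-(|V|-|W|)$, and the Leray bound $|W|-\gamma_E-1$ is preserved) — it remains only to verify $\tilde H_i(\nc(\mathcal{H}))=0$ for $i\ge|V|-\gamma_E(\mathcal{H})-1$, and this is precisely Theorem~\ref{eta dual}. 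Combining, $\nc(\mathcal{H})$ is $(|V|-\gamma_E(\mathcal{H})-1)$-Leray.

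\medskip

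\textbf{Main obstacle.} The only delicate point I anticipate is bookkeeping the iterated-deletion monotonicity of $\gamma_E$ cleanly: when I delete several vertices to pass from $\mathcal{H}$ to an induced picture on $W$, I want the Leray bound to stay sharp, i.e. the decrease in $|V|$ must be matched by at most the same decrease in $\gamma_E$. The per-vertex inequality $\gamma_E(\mathcal{H}\neg~v)\ge\gamma_E(\mathcal{H})-1$ handles exactly this, but one has to be careful that $\mathcal{H}\neg~v$ stays isolated-vertex-free (it does, as noted) so the induction hypothesis genuinely applies at each stage. Everything else is the same Mayer–Vietoris-free "all proper restrictions plus top homology" packaging used elsewhere in the paper.
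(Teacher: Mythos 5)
Your proposal is correct and follows essentially the same route as the paper: induction on $|V|$, the key inequality $\gamma_E(\mathcal{H}\neg\,v)\geq\gamma_E(\mathcal{H})-1$ obtained by lifting a minimum edgewise-dominating family of $\mathcal{H}\neg\,v$ back to edges of $\mathcal{H}$ and adjoining one edge through $v$, together with $\nc(\mathcal{H}\neg\,v)=\nc(\mathcal{H})[V\setminus\{v\}]$ and Theorem~\ref{eta dual} for the top-level homology. The only differences are cosmetic (you make explicit the appeal to Theorem~\ref{eta dual} that the paper leaves implicit), so no changes are needed beyond trimming the unnecessary iterated-deletion discussion.
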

	\begin{proof}
	We apply induction on $|V|$.
	The statement is obvious when $|V| = 1$.
	Take an arbitrary vertex $v \in V$.	
	It is sufficient to show that $\gamma_E(\mathcal{H} \neg~v) \geq \gamma_E(\mathcal{H}) -1$.
	
	Let $\mathcal{F}$ be a subgraph of $\mathcal{H} \neg~v$ such that $\bigcup_{A\in\mathcal{F}}A$ dominates $V(\mathcal{H}\neg~v) = V \setminus \{v\}$ in $\mathcal{H}\neg~v$ and $|\mathcal{F}| = \gamma_E(\mathcal{H}\neg~v)$.
	For each $A \in \mathcal{F}$, let $A' = A$ if $A \in \mathcal{H}$ and let $A' = A \cup \{v\}$ if $A \notin \mathcal{H}$.
	Since $\mathcal{H}$ contains no isolated vertices, we can take an edge $A_v \in \mathcal{H}$ such that $v \in A_v$.
	Then it is obvious that $\bigcup_{A'\in\mathcal{F}'}A'$ dominates $V$ where $\mathcal{F}' = \{A': A\in\mathcal{F}\} \cup \{A_v\}$, implying $\gamma_E(\mathcal{H} \neg~v)+1 \geq \gamma_E(\mathcal{H})$.
	\end{proof}

\medskip
\section{Concluding remarks}\label{concluding remarks}
\subsection{Homological connectivity of general position complexes}\label{genpos}
In this section, we discuss an application of Corollary~\ref{eta} to the homological connectivity of ``general position complexes''.

Let $P$ be a set of points in $\mathbb{R}^d$, and let $G(P)$ denote the simplicial complex consisting of those subsets of $P$ which are in general position.
Furthermore, let $\varphi(P)$ denote the maximal size of a subset of $P$ in general position, that is, $\varphi(P) = \dim(G(P))+1$.
In \cite{HMM16}, it was shown that if $\varphi(P) > d\binom{2k-2}{d}$ then $\eta(G(P)) \geq k$.
We give an alternative proof of it, by showing the following matroidal generalization.
\begin{lemma}\label{lem:matroids}
Let $M$ be a matroid of rank $r$ on $X$.
For any finite subset $Y$ of $X$, define a hypergraph \[\mathcal{H}_Y=\{S\subseteq Y:|S|\leq r, S\text{ is a circuit of }M\}.\]
If $\mathcal{H}_Y$ has an independent set of size greater than $(r-1)\binom{2k-2}{r-1}$, then $\eta(\ii(\mathcal{H}_Y)) \geq k$.
\end{lemma}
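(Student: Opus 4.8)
The plan is to deduce the bound from Corollary~\ref{eta} through the strong total domination number: I will show $\tilde\gamma(\mathcal{H}_Y)\geq 2k-1$, so that $\lceil \tilde\gamma(\mathcal{H}_Y)/2\rceil\geq k$ and hence $\eta(\ii(\mathcal{H}_Y))\geq k$. If $\mathcal{H}_Y$ has an isolated vertex then $\tilde\gamma(\mathcal{H}_Y)=\infty$ (equivalently, $\ii(\mathcal{H}_Y)$ is contractible), so we may assume it does not. Fix an independent set $I$ of $\mathcal{H}_Y$ with $|I|>(r-1)\binom{2k-2}{r-1}$; since $I\subseteq V(\mathcal{H}_Y)\subseteq Y$ we have $\tilde\gamma(\mathcal{H}_Y)\geq\gamma(\mathcal{H}_Y;I)$, so it suffices to show that no $W\subseteq V(\mathcal{H}_Y)$ with $|W|\leq 2k-2$ dominates $I$ in $\mathcal{H}_Y$.

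The heart of the argument is to bound the number of vertices of $I$ that a fixed such $W$ can dominate, by translating strong total domination into matroid language. If $W$ dominates $v$, then there is $W'\subseteq W\setminus\{v\}$ with $W'\cup\{v\}\in\mathcal{H}_Y$, i.e.\ $W'\cup\{v\}$ is a circuit of $M$ of size at most $r$; being minimally dependent, $W'$ is independent in $M$, $|W'|\leq r-1$, and $v\in\operatorname{cl}_M(W')$. When $\operatorname{rank}_M(W)\geq r-1$, extend $W'$ within $W$ to an independent set $\widehat W\subseteq W$ of size exactly $r-1$, so that $v\in\operatorname{cl}_M(W')\subseteq\operatorname{cl}_M(\widehat W)$; hence every vertex of $I$ dominated by $W$ lies in $\operatorname{cl}_M(\widehat W)$ for one of the at most $\binom{|W|}{r-1}\leq\binom{2k-2}{r-1}$ independent $(r-1)$-subsets of $W$. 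For each such flat $F=\operatorname{cl}_M(\widehat W)$, which has rank $r-1$, any $r$ of its elements are dependent and contain a circuit $C$ with $|C|\leq r$; were those $r$ elements in $I$, then $C\subseteq I\subseteq Y$ would give $C\in\mathcal{H}_Y$, contradicting independence of $I$. Thus $|F\cap I|\leq r-1$, and summing over the flats shows $W$ dominates at most $(r-1)\binom{2k-2}{r-1}<|I|$ vertices of $I$. (The remaining case $\operatorname{rank}_M(W)<r-1$ is easier: every vertex dominated by $W$ lies in the single flat $\operatorname{cl}_M(W)$, which by the same reasoning meets $I$ in at most $\operatorname{rank}_M(W)\leq 2k-2$ points.) In either case $W$ fails to dominate $I$, so $\gamma(\mathcal{H}_Y;I)\geq 2k-1$, whence $\tilde\gamma(\mathcal{H}_Y)\geq 2k-1$, and Corollary~\ref{eta} completes the proof.

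I expect the main obstacle to be the bookkeeping in the counting step: charging each dominated vertex of $I$ to a suitable rank-$(r-1)$ flat spanned inside $W$ so that only $\binom{2k-2}{r-1}$ flats are relevant, and correctly handling the degenerate range $\operatorname{rank}_M(W)<r-1$. Apart from Corollary~\ref{eta}, the only ingredients are three standard matroid facts — closure does not change rank, a dependent set contains a circuit, and a circuit contained in a rank-$\rho$ flat has at most $\rho+1$ elements — from which the estimate $|\operatorname{cl}_M(\widehat W)\cap I|\leq r-1$ and the whole chain of inequalities follow routinely.
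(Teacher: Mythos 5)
Your proof is correct and follows essentially the same route as the paper: reduce via Corollary~\ref{eta} to showing $\tilde{\gamma}(\mathcal{H}_Y) > 2k-2$, then charge each vertex of the large independent set $I$ dominated by a set $W$ of size $2k-2$ to a rank-$(r-1)$ flat $\operatorname{cl}_M(\widehat W)$ with $\widehat W \subseteq W$, $|\widehat W| = r-1$, each such flat meeting $I$ in at most $r-1$ points. You are in fact slightly more careful than the paper's own argument (which tacitly asserts that every dominated vertex lies in the span of an exactly-$(r-1)$-element subset of $W$); the only remark worth adding is that in your case $\operatorname{rank}_M(W) < r-1$ the conclusion needs $|I| > \operatorname{rank}_M(W)$, which holds since $\operatorname{rank}_M(W) \leq r-2 < |I|$ whenever $\binom{2k-2}{r-1} \geq 1$ --- the same implicit nondegeneracy assumption ($r-1 \leq 2k-2$) on which the paper's count also rests.
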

\begin{proof}
By Corollary~\ref{eta}, it is sufficient to show that $\tilde{\gamma}(\mathcal{H}_Y)>2k-2$.
Let $W$ be a subset of $Y$ of size $2k-2$.
Take an independent set $A$ of $\mathcal{H}_Y$ with $|A|>(r-1)\binom{2k-2}{r-1}$.
Observe that any subset of $A$ of size $r$ is independent in the matroid $M$. 
We claim that $W$ cannot dominate $A$.

Any vertex $v$ can be dominated by $W$ in $\mathcal{H}_Y$ if and only if there is $U \subset W$ of size $r-1$ such that $v$ is contained in a submatroid $M_U$ of $M$ generated by $U$.
Each $U \subset W$ with $|U| = r-1$ dominates at most $r-1$ vertices of $A$ in $\mathcal{H}_Y$ since it generates a submatroid $M_U$ of rank at most $r-1$ of $M$.
Since $|\binom{W}{r-1}| = \binom{2k-2}{r-1}$, $W$ dominates at most $(r-1)\binom{2k-2}{r-1}$ vertices of $A$ in $\mathcal{H}_Y$.
This completes the proof.
\end{proof}

Let $g_d(k)$ be the minimum integer such that for any $P \subset \mathbb{R}^d$, if $\varphi(P) \geq g_d(k)$ then $G(P)$ is $k$-connected.
Since affine independence on $P$ defines a matroid on $P$, Lemma~\ref{lem:matroids} yields an upper bound on $g_d(k)$ which is in $O(k^d)$. Here we give an example which shows that this bound is asymptotically tight, in other words, we show that the function $g_d(k)$ is in $\Theta(k^d)$.

\begin{example}\label{ex:genpos}
Let $A$ be a set of $n > d$ points in $\mathbb{R}^d$ which are in general position. Let $H$ be the set of $N=\binom{n}{d}$ hyperplanes spanned by the $d$-tuples of points in $A$.
Let $B$ be a set of $N$ points in general position in $\mathbb{R}^d$ such that $|B\cap h|=1$ for every hyperplane $h\in H$. Let $P = A\cup B$. Notice that $|P| = N+n$ and $\varphi(P) = N+d-1$.

\begin{claim}\label{homologyGP}
$\tilde{H}_i(G(P)) = 0$ if and only if $i\neq n-1$. 
\end{claim}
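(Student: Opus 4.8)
The plan is to identify $G(P)$ with a noncover complex (equivalently, to analyze its Alexander dual via the independence complex of a hypergraph) and then pin down the single non-vanishing reduced homology group. First I would observe that $G(P) = \ii(\mathcal{H})$, where $\mathcal{H}$ is the hypergraph on $P$ whose edges are the minimal affinely dependent subsets (circuits of the affine matroid on $P$) that have size at most $d+1$; since $P$ lies in $\mathbb{R}^d$, every such circuit has size exactly $d+1$, so $\mathcal{H}$ is $(d+1)$-uniform. The top dimension of $G(P)$ is $\varphi(P)-1 = N+d-2$, so $\tilde H_i(G(P)) = 0$ automatically for $i \geq N+d-1$; the real content is that the only interesting homology sits in dimension $n-1$.

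The key structural point is the role of the set $B$: by construction $B$ is in general position, so no circuit of $\mathcal{H}$ is contained in $B$, hence $B$ itself is a face of $G(P) = \ii(\mathcal{H})$ of dimension $N-1$. On the other hand, every $d$-subset $S$ of $A$ spans a hyperplane $h_S \in H$, and the point $b_S = B \cap h_S$ together with $S$ forms an affinely dependent $(d+1)$-set, i.e.\ $S \cup \{b_S\}$ contains a circuit. Since $A$ is in general position, $S$ is affinely independent, so $S \cup \{b_S\}$ is itself the circuit. Thus a subset $\sigma \subseteq P$ is a face of $G(P)$ if and only if $\sigma \cap A$ has size at most $d$ \emph{or} $\sigma$ avoids the specific point $b_S$ for every $d$-subset $S \subseteq \sigma \cap A$. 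I would then decompose $G(P)$ according to the trace on $A$: the subcomplex of faces meeting $A$ in at most $d$ points is the join $(\partial \Delta_A)^{(d-1)} * \Delta_B$ of the $(d-1)$-skeleton of the simplex on $A$ with the full simplex on $B$, which is homotopy equivalent to the $(d-1)$-skeleton of $\Delta_{n-1}$, hence to a wedge of $\binom{n-1}{d}$ copies of $S^{d-1}$. The faces $\sigma$ with $|\sigma \cap A| \geq d+1$ attach cells that must kill all of this lower-dimensional homology (because $A$ itself, being affinely dependent once $|A| = n > d$, is \emph{not} a face, but any $d$-subset of $A$ freely extends) while creating exactly the claimed homology in dimension $n-1$.

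The cleanest route, and the one I would actually push through, is via Alexander duality (Theorem~\ref{duality theorem}) combined with Theorem~\ref{eta dual} / Corollary~\ref{eta} applied to $\mathcal{H}$, or more directly via the exact sequence \eqref{exact noncover} peeling off edges of $\mathcal{H}$ one at a time as in Section~\ref{tight hypergraphs}. Concretely: deleting the $N$ edges $S \cup \{b_S\}$ one by one, each edge-annihilation removes a single point $b_S$ and the Mayer--Vietoris / quotient argument of Lemma~\ref{mv homotopy} reduces the computation to the noncover complex of the hypergraph $\binom{A}{d+1}$ restricted to minimal circuits — but since every $(d+1)$-subset of $A$ of size $>d$ is affinely \emph{independent} except that all of $A$ is dependent, the surviving hypergraph on $A$ is (after removing inclusion-non-minimal edges) essentially $\{A\}$ when $|A|$ is taken down appropriately, whose noncover complex is an empty complex contributing homology in the correct single dimension. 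Tracking the suspensions accumulated along the way yields a single sphere $S^{n-1}$ up to homotopy, giving $\tilde H_i(G(P)) = \mathbb{Z}_2$ for $i = n-1$ and $0$ otherwise.

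The main obstacle I anticipate is bookkeeping: making the inductive peeling precise requires checking at each step that the edge being annihilated is inclusion-minimal (so Lemma~\ref{noncov intersection} applies), that annihilating it removes exactly one vertex of $B$ and leaves the remaining structure isomorphic to a smaller instance of the same type, and that no isolated vertices are created prematurely in $\mathcal{H} - e$ (which would make an intermediate complex contractible and shift the suspension count). Getting the exact dimension $n-1$ — as opposed to something off by the number of annihilation steps — hinges on correctly counting that $N$ suspensions are applied to a complex whose lone homology lies in dimension $(n-1) - N$; equivalently, on the identity relating $\varphi(P) = N + d - 1$, the ambient dimension $d$, and the answer $n-1$, which I would verify by a direct check in the base case $n = d+1$ (so $N = 1$) before running the induction.
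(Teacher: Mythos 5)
Your overall route---pass to the noncover complex of the $(d+1)$-uniform hypergraph $\mathcal{F}$, peel off edges with Lemma~\ref{mv homotopy}, and dualize---can be made to work, but several of the concrete steps you describe are wrong as stated. Edge-annihilation $\mathcal{F}\neg\, e$ deletes the \emph{whole} edge $e=S\cup\{b_S\}$ from the vertex set, i.e.\ $d$ points of $A$ together with $b_S$, not ``a single point $b_S$''; consequently the iteration never terminates in a hypergraph on $A$. Indeed it cannot: $A$ is in general position, so it contains no circuit of size at most $d+1$, the hypergraph ``$\binom{A}{d+1}$ restricted to minimal circuits'' is empty (its noncover complex is void), and $\{A\}$ is not an edge of $\mathcal{F}$. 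What the peeling actually gives is this: every edge of $\mathcal{F}$ is inclusion-minimal and contains a private point of $B$, so deleting an edge isolates that point, $\nc(\mathcal{F}-e)$ is a cone, and Lemma~\ref{mv homotopy} together with (\ref{bbbbbb}) yields $\nc(\mathcal{F})\simeq\Sigma\,\nc(\mathcal{F}\neg\, e)$; these hypotheses persist at every stage (each surviving edge keeps its private $B$-point, so edges stay distinct and inclusion-minimal and no vertex becomes isolated), and after $N-1$ steps one edge remains whose vertex set is the edge itself, because $\bigcup_S\bigl(S\cup\{b_S\}\bigr)=P$; its noncover complex is $S^{-1}$, whence $\nc(\mathcal{F})\simeq S^{N-2}$. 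Your bookkeeping ``$N$ suspensions applied to a complex with homology in dimension $(n-1)-N$'' conflates $G(P)$ with $\nc(\mathcal{F})$: the suspension lemma lives entirely on the noncover side, the conclusion there is $S^{N-2}$ (not $S^{n-1}$, and not a homotopy statement about $G(P)$), and only after Theorem~\ref{duality theorem}, using $|P|=N+n$, do you get $\tilde H_i(G(P))\neq 0$ exactly when $i=(N+n)-(N-2)-3=n-1$. Two side remarks are also incorrect: the subcomplex of faces meeting $A$ in at most $d$ points is a join with the full simplex on $B$, hence a cone (contractible), not a wedge of $(d-1)$-spheres; and $A$ \emph{is} a face of $G(P)$, since general position forbids $d+1$ points on a hyperplane but not affine dependence of all $n$ points. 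Finally, Corollary~\ref{eta} alone only yields vanishing for $i\le n-2$; it gives neither the vanishing in the range $n\le i\le N+d-2$ nor the nonvanishing at $n-1$, so it cannot replace the computation.

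For comparison, the paper's proof avoids induction altogether: the inclusion-maximal faces of $\nc(\mathcal{F})$ are the $N$ complements of edges, every proper subfamily of them has nonempty (hence contractible, being a simplex) intersection while the total intersection is empty, so the nerve theorem gives $\nc(\mathcal{F})\simeq\partial\Delta_{N-1}\simeq S^{N-2}$, and Theorem~\ref{duality theorem} finishes exactly as above. If you repair your peeling argument as indicated (correct terminal object, $N-1$ suspensions, duality applied at the end), it becomes a valid alternative in the spirit of Section~\ref{tight hypergraphs}, but as written the reduction you describe would fail.
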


Before we prove Claim \ref{homologyGP} we note that $G(P)$ is the independence complex of the $(d+1)$-uniform hypergraph $\mathcal{F}$ on $P$ where each edge of $\mathcal{F}$ corresponds to a $d$-tuple $S \subset A$ together with the corresponding point $x \in B$ lying in the hyperplane spanned by $S$. 
Since $B$ is strongly independent in $\mathcal{F}$ and we need $A$ to dominate $B$ in $\mathcal{F}$, we have $\gamma_{si}(\mathcal{F}) \geq n$.
By Corollary~\ref{eta} we get $\tilde{H}_i(G(P))= 0$ for all $i\leq n-2$.

\begin{proof}[Proof of Claim \ref{homologyGP}]
We apply the nerve theorem to determine $\tilde{H}_i(\nc(\mathcal{F}))$. 
The inclusion maximal simplices of $\nc(\mathcal{F})$ are formed by the complements of the edges of $\mathcal{H}_P$ which can be labeled by the $d$-tuples of $A$. 
Let $X_1, \dots, X_N$ denote these simplices. 
Note that each $X_i$ has dimension  $N+n-d-2$. 
Clearly we have $\nc(\mathcal{F}) = \bigcup_{j \in [N]} X_j$, $\bigcap_{j \neq i}X_j \neq \emptyset$ for each $i \in [N]$, and $\bigcap_{j \in [N]} X_j = \emptyset$.
Therefore, by the nerve theorem, $G(P)$ is homotopy equivalent to the boundary of the $(N-1)$-simplex which gives us $\tilde{H}_i(\nc(\mathcal{F}))= 0$ if and only if $i\neq N-2$. Since $|P| = N+n$, the claim now follows by Theorem~\ref{duality theorem}.
\end{proof}
\end{example}

\subsection{Rainbow covers of hypergraphs}\label{sec:rainbow cover}
As an application of Theorem~\ref{leray numbers}, we can obtain the following result for ``rainbow covers''.
Let $l$ and $m$ be positive integers with $l \leq m$.
Given $m$ covers $X_1,\ldots,X_m$ in a hypergraph $\mathcal{H}$, a {\em rainbow cover} of size $l$ is a cover $X = \{x_{i_1},\ldots,x_{i_l}\}$ of $l$ distinct vertices of $\mathcal{H}$ such that $1 \leq i_1 < \cdots < i_l \leq m$ and $x_{i_j} \in X_{i_j}$ for each $j \in \{1,\ldots,l\}$.

\begin{corollary}\label{rainbow covers}
Let $\mathcal{H}$ be a hypergraph with no isolated vertices.
Then each of the following holds:
\begin{enumerate}
\item Suppose that every edge in $\mathcal{H}$ has size at most $3$.
Then for every $|V(\mathcal{H})| - \left\lceil\frac{\tilde{\gamma}(\mathcal{H})}{2}\right\rceil$ covers of $\mathcal{H}$, there exists a rainbow cover.

\item Suppose that every edge in $\mathcal{H}$ has size at most $2$.
Then for every $|V(\mathcal{H})| - \gamma_{si}(\mathcal{H})$ covers of $\mathcal{H}$, there exists a rainbow cover.

\item For every $|V(\mathcal{H})| - \gamma_E(\mathcal{H})$ covers of $\mathcal{H}$, there exists a rainbow cover.
\end{enumerate}
\end{corollary}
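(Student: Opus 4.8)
The plan is to deduce Corollary~\ref{rainbow covers} from Theorem~\ref{leray numbers} via a standard ``colorful'' argument using the fact that the Leray number controls the applicability of a topological Hall / colorful Helly-type theorem. The key observation is that a rainbow cover of $\mathcal{H}$ is exactly a transversal of the family $X_1,\ldots,X_m$ that avoids being a face of $\nc(\mathcal{H})$, i.e.\ a transversal whose vertex set is a \emph{cover}. Equivalently, working in the complement, a set $X$ is a noncover iff $X \in \nc(\mathcal{H})$, and we want to rule out the situation where every partial rainbow selection stays inside $\nc(\mathcal{H})$. So I would set $K = \nc(\mathcal{H})$, note $L(K) \le |V(\mathcal{H})| - d - 1$ where $d$ is the relevant domination parameter ($\lceil \tilde\gamma/2\rceil$, $\gamma_{si}$, or $\gamma_E$, under the appropriate edge-size hypothesis), and apply a colorful topological result to the $m = |V(\mathcal{H})| - d$ sets $X_1,\ldots,X_m$ viewed as subsets of the vertex set.

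First I would invoke the relevant theorem in the literature (the topological colorful Helly theorem of Kalai--Meshulam, or equivalently the statement that if $K$ is a $d$-Leray complex on $V$ and $V_1,\ldots,V_{d+1}$ are sets with each $V_i \notin K$ — here meaning each $V_i$ spans a simplex \emph{not} wholly in $K$, i.e.\ contains a cover — well, more precisely I want the ``missing face'' version): if $K$ is $d$-Leray on vertex set $V$ and $W_1,\ldots,W_{d+1} \subseteq V$ are such that no $W_i \in K$, then there is a transversal $\{w_1,\ldots,w_{d+1}\}$, $w_i \in W_i$, with $\{w_1,\ldots,w_{d+1}\} \notin K$. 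In our setting $K = \nc(\mathcal{H})$, ``$\sigma \notin K$'' means ``$\sigma$ is a cover of $\mathcal{H}$'', and $d+1 = |V(\mathcal{H})| - d_{\mathrm{dom}}$, where $d_{\mathrm{dom}}$ is the domination parameter; Theorem~\ref{leray numbers} gives $L(K) \le |V(\mathcal{H})| - d_{\mathrm{dom}} - 1 = d_{\mathrm{dom}}' $ so that $d_{\mathrm{dom}}'+1 = |V(\mathcal{H})| - d_{\mathrm{dom}}$, matching the number of covers prescribed in each part. The transversal produced is a set of distinct vertices (since it is a simplex of the full simplex on $V$ and transversality of distinct-indexed colors forces distinctness once we note a repeated vertex would make the set a cover of smaller size, which we can pad) — and it is a cover, hence a rainbow cover.

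The three parts then follow immediately: for part~(1) apply this with $d_{\mathrm{dom}} = \lceil \tilde\gamma(\mathcal{H})/2 \rceil$ using Theorem~\ref{leray numbers}(a) under the edge-size-$\le 3$ hypothesis; for part~(2) use $d_{\mathrm{dom}} = \gamma_{si}(\mathcal{H})$ and Theorem~\ref{leray numbers}(b) under the edge-size-$\le 2$ hypothesis; for part~(3) use $d_{\mathrm{dom}} = \gamma_E(\mathcal{H})$ and Theorem~\ref{leray numbers}(c) with no restriction.

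The main obstacle I anticipate is bookkeeping around degenerate edge cases and the precise form of the colorful Helly statement: one must make sure that a transversal of distinctly-indexed sets really does give \emph{distinct} vertices (it does, as long as one is slightly careful — if $x_{i_j} = x_{i_k}$ one can discard the duplicate and still have a cover, though then the rainbow cover has fewer than $l$ elements, so one should state the conclusion as ``a rainbow cover'' without insisting on exactly $l$ vertices, or argue a minimum-size rainbow selection is automatically distinct), and that the ``missing face'' hypothesis ``$W_i \notin \nc(\mathcal{H})$'' is exactly ``$W_i$ is a cover,'' which is immediate from the definition of the noncover complex. Apart from that, everything is a direct translation through Alexander duality is \emph{not} even needed here — only the Leray bound of Theorem~\ref{leray numbers} and the standard colorful topological Helly theorem.
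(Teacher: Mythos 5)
Your proposal is correct and follows essentially the same route as the paper: the paper likewise deduces the corollary by combining the Leray bounds of Theorem~\ref{leray numbers} with the Kalai--Meshulam topological colorful Helly theorem applied to $K=\nc(\mathcal{H})$, where ``not a face of $\nc(\mathcal{H})$'' means ``cover'' and any union of covers is again a cover, so the contrapositive produces a rainbow missing face, i.e.\ a rainbow cover. The only detail glossed over (by you and by the paper alike) is that Theorem~\ref{km} is stated for a vertex \emph{partition}, whereas the given covers need not be disjoint or span $V(\mathcal{H})$, which is handled by the standard vertex-duplication reduction together with your observation that duplicate vertices in the transversal may simply be discarded.
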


Corollary~\ref{rainbow covers} follows from the topological colorful Helly theorem, which we state here as a special case of a famous result by Kalai and Meshulam~\cite{KM05}.
\begin{theorem}[Topological colorful Helly theorem]\label{km}
Let $K$ be a $d$-Leray simplicial complex with a vertex partition $V(K) = V_1 \cup \cdots \cup V_m$ with $m \geq d+1$.
If $\sigma \in K$ for every $\sigma \subset V(K)$ with $|\sigma \cap V_i| = 1$, then there exists $I \subset \{1, \ldots , m\}$ of size at least $m - d$ such that $\bigcup_{i \in I} V_i \in K$.
\end{theorem}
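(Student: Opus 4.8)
The plan is to argue by contradiction in the spirit of Kalai and Meshulam, converting the colourful fullness hypothesis into a high-dimensional homology class that violates $d$-Lerayness. Assume toward a contradiction that $\bigcup_{i\in I}V_i\notin K$ for every $I\subseteq\{1,\dots,m\}$ with $|I|\ge m-d$; since a superset of a non-face is a non-face, it suffices to assume this for every $I$ with $|I|=m-d$. The goal is then to exhibit a subset $W\subseteq V(K)$ and an index $j\ge d$ with $\tilde{H}_j(K[W])\ne 0$, contradicting the hypothesis that $K$ is $d$-Leray. The one structural fact I would extract from the hypothesis at the outset is that $K$ contains the complete $m$-partite complex $M:=\{\sigma\subseteq V(K):|\sigma\cap V_i|\le 1\text{ for all }i\}$: every such $\sigma$ sits inside a full transversal $\{v_1,\dots,v_m\}$, which lies in $K$, so $\sigma\in K$ by downward closure.

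After handling singleton colour classes by a separate (easier) reduction—if $V_i=\{v\}$ one passes to $\mathrm{lk}_K(v)$, which carries the fullness hypothesis on the remaining $m-1$ colours and lets one reinstate the index $i$ at the end—I may assume $|V_i|\ge 2$ for all $i$. Then $M$ is the join of $m$ discrete vertex sets, so $M\simeq\bigvee S^{m-1}$ with $\tilde{H}_{m-1}(M)\ne 0$, while $\tilde{H}_j(M)=0$ for $j\ne m-1$. Because $m\ge d+1$ gives $m-1\ge d$, $d$-Lerayness forces $\tilde{H}_{m-1}(K)=0$, so the inclusion $M\hookrightarrow K$ annihilates the entire $(m-1)$-dimensional homology of $M$. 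The engine of the proof is to trace how the spheres of $M$ get filled as one passes up to $K$, organised as an induction on the number of colour classes: peeling off one class $V_m$ along a vertex $v\in V_m$, I would compare $K$, its deletion $K[V\setminus\{v\}]$, and the link $\mathrm{lk}_K(v)$ through the Mayer--Vietoris sequence attached to $K=K[V\setminus\{v\}]\cup\st(K,\{v\})$,
\[\cdots\to\tilde{H}_i(\mathrm{lk}_K(v))\to\tilde{H}_i(K[V\setminus\{v\}])\to\tilde{H}_i(K)\to\tilde{H}_{i-1}(\mathrm{lk}_K(v))\to\cdots,\]
whose star summand is contractible and whose intersection is the link. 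Every transversal through $v$ restricts to a transversal of the remaining colours in $\mathrm{lk}_K(v)$, and every cross-colour pair $\{v,w\}$ is a face, so $\mathrm{lk}_K(v)$ inherits the colourful fullness hypothesis on $\{V_i\}_{i\ne m}$; moreover links and induced subcomplexes of a $d$-Leray complex are again $d$-Leray. This feeds the inductive hypothesis, producing an almost-complete coloured face in the deletion and in the link, and the two maps above are what must weld them together.

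The main obstacle is precisely this welding step, which is the genuinely homological heart of the argument and where a purely combinatorial deletion/link induction breaks down: the inductive calls return two faces sharing all but one vertex—say $F\cup(V_m\setminus\{v\})$ from the deletion and a witness for $F\cup\{v\}$ from the link—whose set-theoretic union $F\cup V_m$ is the colour-union we want but need not be a face of an arbitrary complex. Forcing it to be a face is exactly what $d$-Lerayness supplies: the obstruction to filling the relevant sphere is a class in $\tilde{H}_j(K[W])$ for a suitable induced subcomplex, and under the standing assumption that $\bigcup_{i\in I}V_i\notin K$ this class is non-zero. The delicate bookkeeping, and the point where I expect to spend the most care, is matching degrees: the multipartite sphere class for a family of $c$ complete colour classes lives in dimension $c-1$, whereas the Leray bound annihilates homology only in dimensions $\ge d$, so one must guarantee that the classes forced to stay complete number at least $m-d$, i.e.\ that the surviving obstruction retains dimension $\ge d$. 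This is exactly where $m\ge d+1$ and the count $|I|=m-d$ enter. One can run the whole argument on the Alexander dual $D(K)$ via Theorem~\ref{duality theorem}, under which $d$-Lerayness of $K$ becomes the connectivity bound $\eta(D(K[W]))\ge|W|-d-1$ and the conclusion becomes the assertion that not every union of $d$ colour classes is a face of $D(K)$; this dual picture is often the more convenient one for controlling the connecting homomorphisms.
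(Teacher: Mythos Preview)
The paper does not supply a proof of Theorem~\ref{km}: it is quoted as a special case of the Kalai--Meshulam theorem~\cite{KM05} and used as a black box to derive Corollary~\ref{rainbow covers}. So there is no proof in the paper to compare your argument against.

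That said, what you have written is a plan rather than a proof, and the gap sits exactly where you flag it. You correctly embed the complete $m$-partite complex $M$ in $K$ and set up the deletion/link Mayer--Vietoris decomposition, but the ``welding step'' is the entire substance of the theorem and you do not carry it out. Concretely: applying the inductive hypothesis to the deletion $K[V\setminus\{v\}]$ (with colour class $V_m$ replaced by $V_m\setminus\{v\}$) produces some index set $I$ and, in the only nontrivial case $m\in I$, a face $\bigcup_{i\in I\setminus\{m\}}V_i\cup(V_m\setminus\{v\})$; applying it to $\mathrm{lk}_K(v)$ (with colours $V_1,\dots,V_{m-1}$, and only once you have checked $m-1\ge d+1$, which your outline does not address) produces some $J$ and a face $\bigcup_{j\in J}V_j\cup\{v\}$. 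There is no reason for $I\setminus\{m\}$ and $J$ to coincide, so in general there is nothing to weld; and even if they did coincide in a common $F$, having $F\cup(V_m\setminus\{v\})\in K$ and $F\cup\{v\}\in K$ does not by itself force $F\cup V_m\in K$. Your sentence ``forcing it to be a face is exactly what $d$-Lerayness supplies'' is an assertion, not an argument: you never identify the induced subcomplex $W$ and the degree $j\ge d$ for which $\tilde H_j(K[W])\ne 0$, nor do you explain how the connecting homomorphism in your Mayer--Vietoris sequence produces it. For comparison, the argument in~\cite{KM05} does not peel vertices at all; it studies a simplicial projection from the join of the colour classes onto a simplex and appeals to a Leray-type nerve argument, which bypasses this welding difficulty entirely.
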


\subsection{Another independence domination parameter}\label{sec:weakdom}
Bounding Leray numbers of noncover complexes in terms of domination numbers of (hyper)graphs also has been studied from an algebraic viewpoint.
For example, see \cite{DS13, DS15, DS-15}.
See \cite{Hoch77} (or \cite[Theorem 1.3]{KM06}) to understand relations between algebra and topology of an abstract simplicial complex in this context.
Especially, it is worth to mention a result in \cite{DS-15} which deals with another independence domination parameter in hypergraphs.

Let $\mathcal{H}$ be a hypergraph on $V$.
Let \[\gamma'(\mathcal{H}; A) := \min\{|W|: W\subset V\setminus A,~W\;\text{weakly dominates}\;v\text{ for each }v\in A\},\]
and $t(\mathcal{H}) := \max\{\gamma'(\mathcal{H}; A):A \in \ii(\mathcal{H})\}$.
The following is a reformulation of Theorem 5.2 in \cite{DS-15}.
\begin{theorem}\label{weak}
Let $\mathcal{H}$ be a hypergraph on $V$ with no isolated vertices.
Then $L(\nc(\mathcal{H})) \leq |V| - t(\mathcal{H}) - 1$.
\end{theorem}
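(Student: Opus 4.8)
The plan is to prove Theorem~\ref{weak} by induction on $|V|$, following the pattern of the proofs of Theorems~\ref{gamma si leray} and~\ref{gamma e leray}. It is convenient to rewrite the parameter: let $G(\mathcal{H})$ be the graph on $V$ with $u\sim w$ whenever $\{u,w\}$ is contained in some edge of $\mathcal{H}$, and let $N(u)$ be the neighbourhood of $u$ in $G(\mathcal{H})$. If $A\in\ii(\mathcal{H})$ then no $u\in A$ is a loop, and a set $W\subseteq V\setminus A$ weakly dominates every $u\in A$ precisely when $W$ meets $N(u)$ for every $u\in A$; hence $\gamma'(\mathcal{H};A)$ is the transversal number of the set system $\{\,N(u)\setminus A:u\in A\,\}$. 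Recall also $\nc(\mathcal{H}\neg~v)=\nc(\mathcal{H})[V\setminus\{v\}]$, that $\mathcal{H}\neg~v$ has no isolated vertices when $\mathcal{H}$ does, and that then $G(\mathcal{H}\neg~v)=G(\mathcal{H})-v$.

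The only new ingredient needed is the monotonicity bound $t(\mathcal{H}\neg~v)\geq t(\mathcal{H})-1$ for every $v\in V$, valid whenever $\mathcal{H}$ has no isolated vertices. To prove it, fix $A\in\ii(\mathcal{H})$ with $\gamma'(\mathcal{H};A)=t(\mathcal{H})$. If $v\in A$, then $A\setminus\{v\}$ is independent and loop-free in $\mathcal{H}\neg~v$, and any $W'\subseteq V\setminus A$ dominating $A\setminus\{v\}$ in $\mathcal{H}\neg~v$ gives, after adjoining one vertex outside $A$ of an edge through $v$, a set of size $|W'|+1$ dominating $A$ in $\mathcal{H}$; hence $\gamma'(\mathcal{H}\neg~v;A\setminus\{v\})\geq t(\mathcal{H})-1$. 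If $v\notin A$, set $A'=A\setminus N(v)$, which is independent and loop-free in $\mathcal{H}\neg~v$; the key point is that the vertices deleted from $A$ are all neighbours of $v$, hence jointly dominated in $\mathcal{H}$ by $v$ alone, so a minimum set dominating $A'$ in $\mathcal{H}\neg~v$ can be turned into a set dominating $A$ in $\mathcal{H}$ that is larger by at most one, after adjoining $v$ and trading the few witnesses lying in $A$ for edge-mates outside $A$. I expect this last trade to be the main obstacle: one must verify that deleting $v$ from every edge does not create a cheaply-dominated independent set of $\mathcal{H}\neg~v$ far from $A$.

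Granting the monotonicity bound, I would prove the theorem by induction on $|V|$, i.e.\ show $\tilde H_i(\nc(\mathcal{H})[W])=0$ for all $W\subseteq V$ and all $i\geq|V|-t(\mathcal{H})-1$, splitting into two cases. For $W\subsetneq V$: every proper induced subcomplex of $\nc(\mathcal{H})$ is an induced subcomplex of some $\nc(\mathcal{H})[V\setminus\{v\}]=\nc(\mathcal{H}\neg~v)$; since $\mathcal{H}\neg~v$ has no isolated vertices, the inductive hypothesis makes $\nc(\mathcal{H}\neg~v)$ a $\big(|V|-1-t(\mathcal{H}\neg~v)-1\big)$-Leray complex, and $t(\mathcal{H}\neg~v)\geq t(\mathcal{H})-1$ gives the required bound. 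For $W=V$: by Theorem~\ref{duality theorem} the statement is equivalent to $\eta(\ii(\mathcal{H}))\geq t(\mathcal{H})$, which I would get from the Mayer-Vietoris sequence of $\ii(\mathcal{H})=\overline{\st}(v)\cup\ii(\mathcal{H}^{-v})$, where $\mathcal{H}^{-v}$ consists of the edges of $\mathcal{H}$ avoiding $v$ and $v\in V\setminus A$ is chosen to be a non-loop (such $v$ exists unless $\ii(\mathcal{H})$ is a simplex, in which case there is nothing to prove): here $\overline{\st}(v)$ is contractible and $\overline{\st}(v)\cap\ii(\mathcal{H}^{-v})=\operatorname{lk}_{\ii(\mathcal{H})}(v)=\ii(\mathcal{H}\neg~v)$, while $t(\mathcal{H}^{-v})\geq t(\mathcal{H})$ (the set system computing $\gamma'(\mathcal{H}^{-v};A)$ has parts no larger than those of the system computing $\gamma'(\mathcal{H};A)$) unless $\mathcal{H}^{-v}$ acquires an isolated vertex and $\ii(\mathcal{H}^{-v})$ is contractible; in all cases the inductive hypothesis, via Theorem~\ref{duality theorem}, yields $\eta(\ii(\mathcal{H}^{-v}))\geq t(\mathcal{H})$ and $\eta(\ii(\mathcal{H}\neg~v))\geq t(\mathcal{H})-1$, whence $\eta(\ii(\mathcal{H}))\geq t(\mathcal{H})$.

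Finally, a much shorter alternative would be to deduce Theorem~\ref{weak} directly from \cite[Theorem~5.2]{DS-15} together with the standard dictionary between Leray numbers of a complex and homological invariants of its Stanley-Reisner ring, as in \cite{Hoch77} (cf.\ \cite[Theorem~1.3]{KM06}); indeed this is how the statement is obtained in the literature, the combinatorial subtlety of the monotonicity bound being sidestepped there by an argument through local cohomology.
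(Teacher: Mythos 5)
Your plan's central new ingredient, the monotonicity bound $t(\mathcal{H}\neg~v)\geq t(\mathcal{H})-1$, is false, and this breaks the whole induction. Take $k\geq 3$ and let $\mathcal{H}$ be the hypergraph on $V=\{v,b,u_1,\dots,u_k,x_1,\dots,x_k\}$ with edges $\{v,b\}$ and $e_i=\{b,u_i,x_i\}$ for $i\in[k]$; it has no isolated vertices. The set $A=\{b,u_1,\dots,u_k\}$ is independent, and since $e_i$ is the only edge through $u_i$ and $b\in A$, any $W\subseteq V\setminus A$ weakly dominating $A$ must contain every $x_i$; hence $\gamma'(\mathcal{H};A)=k$ and $t(\mathcal{H})\geq k$. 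On the other hand, in $\mathcal{H}\neg~v$ the vertex $b$ becomes a loop, so every independent set $A'$ of $\mathcal{H}\neg~v$ avoids $b$, while $b$ shares an edge ($e_i$, unchanged by the operation) with every other vertex; thus the single vertex $b$ weakly dominates every such $A'$ from outside, giving $t(\mathcal{H}\neg~v)\leq 1 < k-1$. This is exactly the failure you flagged as ``the main obstacle'' in the case $v\notin A$ (the case $v\in A$ of your argument is fine): one vertex of $A\cap N(v)$ — here $b$, with $N(v)$ the neighbourhood in $G(\mathcal{H})$ as in your proposal — can be the unique cheap dominator in $\mathcal{H}\neg~v$ of arbitrarily many vertices of $A'=A\setminus N(v)$, whereas in $\mathcal{H}$ each of those vertices needs its own private dominator outside $A$; no trade of bounded size can bridge this. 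Since $\nc(\mathcal{H})[V\setminus\{v\}]=\nc(\mathcal{H}\neg~v)$ is the only way proper induced subcomplexes enter your induction on $|V|$, the inductive hypothesis applied to $\mathcal{H}\neg~v$ only gives vanishing of homology in dimensions at least $|V|-1-t(\mathcal{H}\neg~v)-1$, which in this example is far weaker than the required $|V|-t(\mathcal{H})-1$; the same defect afflicts the link term of your Mayer--Vietoris step for $W=V$. So $t$ simply does not descend along $\mathcal{H}\mapsto\mathcal{H}\neg~v$, and the proposed self-contained proof has a genuine gap at its key lemma.

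What remains sound is your closing remark: Theorem~\ref{weak} is obtained by translating Theorem~5.2 of Dao and Schweig \cite{DS-15} through the algebra--topology dictionary of \cite{Hoch77} (cf.\ \cite[Theorem 1.3]{KM06}). That is precisely how the paper treats the statement — it is stated as a reformulation of the Dao--Schweig result, with no independent topological proof given — so your ``shorter alternative'' is in fact the paper's route, and your main argument does not supply a new proof. A direct argument would require a parameter, or a decomposition of $\nc(\mathcal{H})$, that is genuinely well behaved under passing to the induced subcomplexes $\nc(\mathcal{H}\neg~v)$; the example above shows $t$ itself is not.
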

Consequently, we obtain $\eta(\ii(\mathcal{H})) \geq t(\mathcal{H})$.
Also, Theorem~\ref{weak} gives an analogue of Corollary~\ref{rainbow covers}: every $|V|-t(\mathcal{H})$ covers in $\mathcal{H}$ assigns a rainbow cover.
Note that $t(\mathcal{H}) = \gamma_{si}(\mathcal{H})$ when $\mathcal{H}$ is a graph.

The two independence domination parameters $t(\mathcal{H})$ and $\gamma_{si}(\mathcal{H})$ are not comparable in general.
In particular, we can make examples so that one of the parameters is arbitrarily large while the other remains as a constant.
\begin{example}
Let $\mathcal{H}$ be a complete $k$-uniform hypergraph $\binom{[n]}{k}$ on $n \geq k$ vertices. Then we have $\gamma_{si}(\mathcal{H}) = k-1$ and $t(\mathcal{H}) = 1$.
\end{example}
\begin{example}
Let $k$ and $n$ be positive integers such that $k \geq 3$ and $n \geq 2$.
We construct a $k$-uniform hypergraph $\mathcal{A}_{n,k}$ with vertex set $V_{n,k}$ such that $|V_{n,k}| = \binom{(k-1)n}{k-1} + (k-1)n$ as follows.

Let $W_{n,k} \subset V_{n,k}$ be a subset of size $(k-1)n$.
Consider a bijection $\phi: \binom{W_{n,k}}{k-1} \to V_{n,k} \setminus W_{n,k}$.
Now we define the edges of $\mathcal{A}_{n,k}$ as 
\[\mathcal{A}_{n,k} := \left\{\{\phi(X)\} \cup X: X \subset \binom{W_{n,k}}{k-1}\right\} \cup \binom{V_{n,k} \setminus W_{n,k}}{k}.\]

Since $W_{n,k}$ is an independent set and $\gamma'(\mathcal{A}_{n,k}; W_{n,k}) = n$, we have $t(\mathcal{A}_{n,k}) \geq n$.
Observe that any strongly independent set of $\mathcal{A}_{n,k}$ contains at most one vertex from $W_{n,k}$ and at most one vertex from $V_{n,k} \setminus W_{n,k}$.
Take $u \in W_{n,k}$ and $v \in V_{n,k} \setminus W_{n,k}$ such that $u$ and $v$ are not contained in the same edge of $\mathcal{A}_{n,k}$.
Then the strongly independent set $\{u,v\}$ can be dominated by $k$ vertices.
First observe that a $(k-1)$-set $\phi^{-1}(v)$ in $W_{n,k}$ dominates $v$.
Then take any $(k-2)$-subset $U$ in $\phi^{-1}(v)$ and let $u' = \phi(U \cup \{u\})$.
Clearly $U \cup \{u'\}$ dominates $u$.
This shows $\gamma_{si}(\mathcal{A}_{n,k}) \leq k$.
\end{example}

There can be other notions of domination in hypergraphs, and it is an interesting problem to figure out relations between each of the domination parameters and the topology of the noncover complexes of hypergraphs.

\section*{Acknowledgements}
This paper was conducted while the authors were post-doctoral fellows at the Technion -- Israel Institute of Technology.
A part of this research was done while Jinha Kim was a graduate student at Seoul National University.
Jinha Kim was supported by BSF grant No. 2016077 and ISF grant No. 1357/16, and Minki Kim was supported by ISF grant No. 936/16.
This work was supported by the Institute for Basic Science (IBS-R029-C1).

We are grateful to Ron Aharoni for his advice that led to improving the readability of the paper.
Andreas Holmsen made helpful comments and suggestions in the early stages of the project, especially in Section~\ref{genpos}.
We are also grateful to anonymous referees for insightful comments that helped improve the exposition.
In particular, we are indebted to the referee who pointed out that our initial results about the Betti numbers of noncover complexes in Section~\ref{tight hypergraphs} can be improved to results about the homotopy types.

\end{document}